\titlespacing{\section}{0pt}{*0.9}{*0.5}
\titlespacing{\subsection}{0pt}{*0.8}{*0.4}
\definecolor{purple}{rgb}{0.5, 0.0, 0.5}
\definecolor{dark_green}{rgb}{0.0, 0.5, 0.0}
\definecolor{mygray}{gray}{0.6}
\DeclareMathAlphabet{\mathpzc}{OT1}{pzc}{m}{it}
\newcommand{\ow}{\mathcal{O}}
\newcommand{\I}{\mathcal{I}}
\newcommand{\Uu}{\mathcal{U}}
\newcommand{\Uut}{\tilde{\Uu}}
\newcommand{\J}{\mathcal{J}}
\newcommand{\M}{\mathcal{M}}
\newcommand{\Mb}{\mathbf{M}}
\newcommand{\ellh}{\hat{\ell}}
\newcommand{\Rb}{\mathbf{R}}
\newcommand{\Rt}{{\tilde{R}}}
\newcommand{\Otil}{\tilde{O}}
\newcommand{\Ub}{\mathbf{U}}
\newcommand{\Vb}{\mathbf{V}}
\newcommand{\xb}{\mathbf{x}}
\newcommand{\xbh}{{\hat{\mathbf{x}}}}
\newcommand{\image}{\sfsty{im}}
\newcommand{\yb}{\mathbf{y}}
\newcommand{\ellb}{\bar{\ell}}
\newcommand{\ellt}{\tilde{\ell}}
\newcommand{\Gb}{\mathbf{G}}
\newcommand{\zb}{\mathbf{z}}
\newcommand{\Pib}{\mathbf{\Pi}}
\newcommand{\pit}{\tilde{\pi}}
\newcommand{\Omb}{\mathbf{\Omega}}
\newcommand{\Ombh}{\hat{\Omb}}
\newcommand{\Phib}{\mathbf{\Phi}}
\newcommand{\Scal}{\mathcal{S}}
\newcommand{\Z}{\mathbb{Z}}
\newcommand{\R}{\mathbb{R}}
\newcommand{\E}{\mathbb{E}}
\newcommand{\N}{\mathbb{N}}
\newcommand{\gb}{\mathbf{g}}
\newcommand{\gbt}{\tilde{\gb}}
\newcommand{\ga}{\acute{g}}
\newcommand{\gh}{\hat{g}}
\newcommand{\Pb}{\mathbf{P}}
\newcommand{\Pbh}{\hat{\Pb}}
\newcommand{\Sb}{\mathbf{S}}
\newcommand{\Sbwt}{\widetilde{\mathbf{S}}}
\newcommand{\Sbg}{\grave{\mathbf{S}}}
\newcommand{\Sbh}{\hat{\mathbf{S}}}
\newcommand{\Sbwh}{\widehat{\mathbf{S}}}
\newcommand{\Ab}{\mathbf{A}}
\newcommand{\Ecal}{\mathcal{E}}
\newcommand{\Abh}{\hat{\Ab}}
\newcommand{\Abwh}{\widehat{\Ab}}
\newcommand{\Abwt}{\widetilde{\Ab}}
\newcommand{\Aba}{\acute{\Ab}}
\newcommand{\Abg}{{\grave{\mathbf{A}}}}
\newcommand{\bb}{\mathbf{b}}
\newcommand{\bbh}{\hat{\bb}}
\newcommand{\bba}{\acute{\bb}}
\newcommand{\betah}{\hat{\beta}}
\newcommand{\Db}{\mathbf{D}}
\newcommand{\eb}{\mathbf{e}}
\newcommand{\Ib}{\mathbf{I}}
\newcommand{\Hb}{\mathbf{H}}
\newcommand{\Ubh}{\hat{\mathbf{U}}}
\newcommand{\Sigb}{\mathbf{\Sigma}}
\newcommand{\Thb}{\mathbf{\Theta}}
\newcommand{\epst}{{\tilde{\epsilon}}}
\newcommand{\deltatil}{\tilde{\delta}}
\newcommand{\Hbbar}{\bar{\mathbf{H}}}
\newcommand{\rpm}{\raisebox{.2ex}{$\scriptstyle\pm$}}
\newcommand{\sfsty}[1]{\ensuremath{\mathsf{#1}}}  
\newcommand{\Ll}{\mathscr{L}}
\DeclarePairedDelimiter\ceil{\lceil}{\rceil}
\DeclarePairedDelimiter\floor{\lfloor}{\rfloor}
\DeclareMathOperator{\tr}{tr}
\DeclareMathOperator{\nnz}{nnz}
\DeclareMathOperator{\Var}{Var}
\DeclareMathOperator{\Bern}{Bern}
\definecolor{Green1}{rgb}{0.0, 0.5, 0.0}
\newtheorem{Thm}{Theorem}
\newtheorem{Cor}{Corollary}
\newtheorem{Prop}{Proposition}
\newtheorem{Lemma}{Lemma}
\newtheorem{Def}{Definition}
\newtheorem{Rmk}{Remark}
\begin{document}
\title{Distributed Hybrid Sketching for $\ell_2$-Embeddings}

\author{
  \IEEEauthorblockN{\textbf{Neophytos Charalambides}, \textbf{Arya Mazumdar}}\\
  \IEEEauthorblockA{Department of CSE and Halicio\u glu Data Science Institute, University of California, San Diego\\
  \thanks{Part of this work (namely, Sec.~\ref{distr_local_sk_sec}) was presented at the IEEE International Symposium on Information Theory, 2024 \cite{CM24}. Our main contribution here goes well-beyond \cite{CM24}, both in terms of new methods as well as generalizations.}
  Email: \texttt{ncharalambides@ucsd.edu}, \texttt{arya@ucsd.edu}}
\vspace{-5mm}
}

\maketitle

\begin{abstract}
Linear algebraic operations are ubiquitous in engineering applications, and arise often in a variety of fields including statistical signal processing and machine learning. With contemporary large datasets, to perform linear algebraic methods and regression tasks, it is necessary to resort to both distributed computations as well as data compression. In this paper, we study \textit{distributed} $\ell_2$-subspace embeddings, a common technique used to efficiently perform linear regression. In our setting, data is distributed across multiple computing nodes and a goal is to minimize communication between the nodes and the coordinator in the distributed centralized network, while maintaining the geometry of the dataset. Furthermore, there is also the concern of keeping the data private and secure from potential adversaries.  In this work, we address these issues through randomized sketching, where the key idea is to apply distinct sketching matrices on the local datasets. A novelty of this work is that we also consider \textit{hybrid sketching}, \textit{i.e.} a second sketch is applied on the aggregated locally sketched datasets, for enhanced embedding results. One of the main takeaways of this work is that by hybrid sketching, we can interpolate between the trade-offs that arise in off-the-shelf sketching matrices. That is, we can obtain gains in terms of embedding dimension or multiplication time. Our embedding arguments are also justified numerically.
\end{abstract}

\section{Introduction}
\label{intro}

Randomization in numerical linear algebra and data science has been a key tool for dimensionality reduction of large matrices and datasets over the past 25 years \cite{Mah11,Woo14,DM16,DM17}. This is an interdisciplinary field which is abbreviated to ``RandNLA''. One of the main tools in the field is ``sketching'', a term that refers to random linear dimension reduction maps. This method provides an effective and computationally efficient randomized approach to addressing problems like matrix factorization \cite{MRT11}, eigenvalue computation \cite{SW23}, $k$-means \cite{BZMD14,CEMMP15}, or solving linear systems \cite{Sar06,DMMS11}, which are prohibitive in high dimensions and expensive with deterministic methods. The core idea behind sketching is to leverage randomization to create structured and well-conditioned matrices that preserve important properties of the original matrices after compressing them. By applying known algorithms to the sketched matrices, one obtains an approximate solution that is statistically close to the true solution of the problem. A sufficient condition which guarantees that through sketching we obtain a high quality solution to overdetermined systems is the $\ell_2$-\textit{subspace embedding} ($\ell_2$-s.e.) property, which says that with high probability the geometry of the system's basis is preserved after performing the sketching procedure. {A widely used class of sketching matrices satisfying this property are ``oblivious subspace embeddings'' (OSEs) which are dense randomized linear maps that project vectors to a lower-dimensional space.}

More recently, with the increase in daily data generation which is prevalent in many machine learning and statistical inference models, resorting to distributed systems for storage and computations is a necessity \cite{LA20,CLR23}. In many applications the data is generated locally and should not be shared with anyone within the distributed network before encrypting it, as privacy and security concerns may arise, \textit{e.g.} hospitals in a health care network each of which are comprised of local servers and are collectively operated by a health organization. A prime example with such a framework which has been extensively studied is that of federated learning (FL) \cite{MMRHA17,KMRR16,SRRA22,SGZC24}. To this end, \textit{distributed sketching} has been studied in several settings \cite{KSD17,CMPH22,BP23,BBGL23,CPH23b}, in which a central aggregating server administers a centralized distributed computing network comprised of $k$ servers with whom the coordinator communicates part of the data, and each of them locally perform a sketch or computation that is sent back. The coordinator then decides on a final sketch of the entire dataset or a solution to an optimization problem. We illustrate such a setting in Fig.~\ref{fig_schematic}, which we study.

Recently, \textit{hybrid sketching} has been proposed in distributed sketching \cite{BP23}. This refers to a compression method where a sequential application of two different sketches takes place, which is useful in distributed computing. In particular, it might be computationally feasible for server nodes to sample as much data as possible, though the central administrator wishes to further compress the global sketch in order to further accelerate the final computation. In such a scenario the server nodes should send a sketch that is not too small, in order to maintain as much geometric information of their data as possible, as the second sketch will further distort the sketched local data.

The main benefits of hybrid sketching are that we can obtain high quality sketches when: ($\mathrm{I}$) the data is either inherently distributed and too large to gather into one place or store at a single server before sketching takes place; ($\mathrm{II}$) the locally generated data is sensitive, and local sketching can provide a level of privacy. Through hybrid sketching we can maximize the information of the local data retained when bandwidth and other network constraints are imposed, so that the resulting sketch can have better approximation guarantees compared to distributed local sketching. Throughout the article, other benefits of hybrid sketching will also be discussed.

In this paper, we focus on the task of \textit{distributed} $\ell_2$-\textit{subspace embedding} through sketching, as well as its composition with \textit{hybrid sketching} for enhanced embedding results of distributed data matrices. The main contributions of our work are that we show how we can obtain a global $\ell_2$-s.e. through decentralized sketching, and  provide the first theoretical analysis of hybrid sketching. We concretely show that in certain cases hybrid sketching is in fact the appropriate sketching technique to consider. Moreover, our proposed framework ensures that no information about the data can be revealed when potential eavesdroppers are present, nor to the central coordinator. That is, we ensure security of the local data, which is desirable in FL. We also discuss the implication our sketching technique has in distributed gradient descent (GD), arguably the most common technique used in machine learning.

\subsection{Contributions}

The contributions of this paper are the following. $(\mathrm{i})$ First, we present a general framework for a global $\ell_2$-s.e. through local sketching (Thm.~\ref{subsp_emb_thm_global}), which is more general than related works \cite{TZCC22}. $(\mathrm{ii})$ We then conduct a rigorous analysis of \textit{hybrid sketching} (Thm.~\ref{hybrid_thm}), which was only considered empirically in \cite{BP23}. $(\mathrm{iii})$ To this extent, we show that by sequentially applying two distinct $\ell_2$-s.e. sketches we obtain a composite sketching matrix, which applies beyond our distributive hybrid sketching framework. $(\mathrm{iv})$ Finally, what is arguably the most interesting contribution of this work in terms of theoretical results, is that by a judicious choice of locally sketching through \textit{Subsampled Randomized Hadamard Transforms} (SRHTs) and by then applying a \textit{Rademacher sketching matrix} on the aggregated sketch globally through, it is possible to benefit in terms of multiplication time and/or reduced dimension in comparison to other prevalent sketching matrices (Sec.~\ref{subsec_SRHT_Rrad}). This is a consequence of the fact that we use fast OSEs locally, and an OSE with better embedding properties globally. This approach is also numerically superior in terms of the $\ell_2$-s.e. error.

The motivation of our proposed hybrid scheme, was to concurrently satisfy the following desiderata, to further accelerate computations that rely on large distributed datasets: $(\mathrm{a})$ \textit{security} in distributed data aggregation, $(\mathrm{b})$ \textit{local compression} of data aggregation while maintaining the geometric properties of the global dataset, $(\mathrm{c})$ perform the compression \textit{efficiently} and \textit{distributively}, $(\mathrm{d})$ reduce the \textit{communication load} required to deliver the local data, and $(\mathrm{e})$ reduce the \textit{space requirement} required by the central server. Although for our theory we focus on least squares, a fundamental primitive tool used for solving a variety of optimization and estimation problems, our approach is applicable in other downstream applications which we numerically validate.


\subsection{Related Work}
\label{related_work}

The closest works to what we present, are distinct sketching techniques from \cite{TZCC22,BBGL23,BP23}. In a recent monograph \cite{murray2023}, the work of \cite{BBGL23}  is the only one mentioned in which local sketching is performed. In \cite{BBGL23}, which coined the term ``\textit{block-SRHT}'', the authors assume that a partition of the global dataset is sent to each of the nodes, who then apply a SRHT to their allocated partition along with an additional global permutation and a local signature matrix, before sending back the sketch to the coordinator who sums the sketches.

The main technique used in the block-SRHT is that by applying the global permutation and an additional signature matrix, and performing what they call the ``sum-reduce'' operation, there is resemblance with the standard SRHT \cite{AC06,AC09,DMMS11}. Their overall computational cost is a constant factor higher than sketching with a standard SRHT for the same embedding dimension. On the contrary, the overall computational cost of our approach is a constant factor lower than standard sketches, as we perform multiple smaller sketches and then \textit{aggregate} them, instead of \textit{summing} them. To this extent, our objective is also more general, as we assume the data is already distributed across a network (such as in FL), and the global dataset is never itself aggregated. Another drawback of the block-SRHT, is that its overall communication load is a factor of $k$ higher than ours, for the same embedding dimension.

The work of \cite{TZCC22} develops a technique that the authors term ``\textit{Sparse-SRHT}'' for linear regression, which resembles the algorithm we present in Sec.~\ref{distr_local_sk_sec}. Our algorithm can be viewed as a generalization of theirs, as they only consider local sketching performed through a regular SRHT, and do not consider the distributed setting depicted in Fig.~\ref{fig_schematic}. Furthermore, they assume that the data is centrally administered and do not leverage the fact that the computation, which is true also in their case, can be divided and performed in parallel by $k$ server nodes. We do however use a result of theirs to simplify our comparisons in hybrid sketching, primarily to avoid the use of an oversampling parameter, which is an artifact of the techniques we are using for our distributed algorithm. One difference between our approach and theirs, is that they also consider an additional permutation matrix that is performed on the global dataset $\Ab$, which requires either further communication between the servers or that the dataset is aggregated before local sketching takes place. As they explain, this does not affect the theoretical analysis, though we observed that experimentally such a permutation improves the flattening of the transformed leverage scores.

To the best of our knowledge, hybrid sketching has only previously been considered in the work of \cite{BP23}. The work of \cite{BP23} considers distributive iterative Hessian sketching, and the hybrid sketch they propose is simply applying uniform sampling locally, and then a Sparse Johnson-Lindenstrauss Transform (SJLT) \cite{NN13}. The main objective in their case for performing hybrid sketching is to reduce the overall complexity of the sketching procedure, and they are not concerned about the precise embedding guarantees. Furthermore, this approach is purely empirical, as uniform sampling cannot provide spectral guarantees unless very stringent assumptions are in place, which is the trade-off we pay by applying random projections. This seems to be why no theoretical justification or reasoning is given for their proposed hybrid sketch.

Other related work include distributed sketching techniques in which different sketches are performed on the global dataset to solve varying sketched versions of the global system of linear equations, which are then averaged or aggregated to determine a good approximate solution for linear regression \cite{KSD17,BP23}. The main drawback here compared to our approach is that each sketch is performed on the global dataset, and therefore it is not suitable for systems in which local nodes may wish to not share their information with other nodes.

Within the FL context, closely related works are \cite{NVR22,AN24} which also make coherence assumptions we bypass through our techniques. This is also the case for works in optimization \cite{NDV19,GMM20}. Moreover, in signal processing there have been works which consider sketching structured matrices for specific applications, \textit{e.g.} Kalman-filtering \cite{BG17} and Toeplitz matrices \cite{QP17}. In what we present we do not assume any structure on the data matrices, and our proposals can be further utilized in other applications; such as distributed covariance estimation \cite{WH12}.

\noindent\textbf{Organization:} The paper is organized as follows. In Sec.~\ref{problem_setup}, we recall the definition of an $\ell_2$-s.e., and give an outline of our proposed scheme which implements such an embedding distributively without aggregating the data a priori. 
In Sec.~\ref{randnla_prelim} we setup the necessary notation and review background from RandNLA which we will need for the main portion of the paper. We then move to the main body, where in Sec.~\ref{distr_local_sk_sec} we formally present our algorithm for distributed local sketching and how it obtains a global $\ell_2$-s.e. The analyses of our results are presented in Sec.~\ref{analysis_subsec}. In Sec.~\ref{hybr_sketching_sec} we extend the work of Sec.~\ref{distr_local_sk_sec} to hybrid sketching. In Sec.~\ref{sec_distr_GD_sec} we discuss the implications of our proposed methods to distributed GD, as well as their security and privacy aspects. Finally, we present numerical experiments in Sec.~\ref{exper_sec_distr_sketching} and Sec.~\ref{exper_sec}, and concluding remarks in Sec.~\ref{concl_sec}. All proofs can be found in Appendix \ref{app_proofs}, and in Appendix \ref{discr_gaussian_appendix} we discuss an intuitive justification for our proposed hybrid approach.

\section{Problem Setup and Our Distributed Scheme}
\label{problem_setup}

One of the most representative applications of RandNLA is the linear least squares problem, in which we seek to approximately solve the overdetermined linear system $\Ab\xb=\bb$:
\begin{equation}
\label{x_star_pr_lr}
  \xb^{\star} = \arg\min_{\xb\in\R^d}\Big\{L_{ls}(\Ab,\bb;\xb)\coloneqq\|\Ab\xb-\bb\|_2^2\Big\}
\end{equation}
for $\Ab\in\R^{N\times d}$ and $\bb\in\R^{N}$, where $N\gg d$. This is foundational method in data fitting and optimization, which has applications in a variety of fields. A regularizer $\lambda T(\xb)$ can also be added to $L_{ls}(\Ab,\bb;\xb)$ if desired. In our setting, we assume that $\Ab$ and $\bb$ partitioned across their rows are local datasets $\Ab_i$ with corresponding labels $\bb_i$, \textit{i.e.}
\begin{equation}
\label{part_data}
  \Ab=\Big[\Ab_1^\top \ \cdots \ \Ab_k^\top\Big]^\top \quad \text{ and } \quad \bb=\Big[\bb_1^\top \ \cdots \ \bb_k^\top\Big]^\top
\end{equation}
where $\Ab_i\in\R^{n\times d}$ and $\bb_i\in\R^{n}$ for all $i$, and $n=N/k$ for which $n>d$. We consider the reduced SVD of $\Ab=\Ub\Sigb\Vb^\top$, where $\Ub\in\R^{\N\times d}$ and $\Ab$ is full rank. To simplify our presentation we assume that $k|N$, and $\{\Ab_\iota\}_{\iota=1}^k$ are equipotent. A way to approximate \eqref{x_star_pr_lr} in a faster manner, is to instead solve the modified least squares problem
\begin{equation}
\label{x_til_pr_lr}
  \xbh = \arg\min_{\xb\in\R^d}\Big\{L_{\Sb}(\Ab,\bb;\xb)\coloneqq\|\Sb(\Ab\xb-\bb)\|_2^2\Big\}\
\end{equation}
for $\Sb\in\R^{R\times N}$ an $\ell_2$-s.e. sketching matrix, with $R<N$.

\begin{Def}[Ch.2 \cite{Woo14}]
\label{se_def}
A sketching matrix $\Sb\in\R^{R\times N}$ is a \textbf{$\ell_2$-subspace embedding} (or satisfy the \textbf{$\ell_2$-s.e. property}) of $\Ab\in\R^{N\times d}$ with a left orthonormal basis $\Ub$, and  $N\gg d$ s.t. $N>R>d$, if for any $\yb\in\image(\Ub)$ we have with high probability:
\begin{equation}
\label{subsp_emb_def}
  (1-\epsilon)\cdot\|\yb\|\leqslant\|\Sb\yb\|\leqslant(1+\epsilon)\cdot\|\yb\|
\end{equation}
for $\epsilon>0$. The $\ell_2$-s.e. property is equivalent to satisfying:
\begin{equation}
\label{subsp_emb_id}
  \|\Ib_d-(\Sb\Ub)^\top(\Sb\Ub)\|_2\leqslant \epsilon\ .
\end{equation}
\end{Def}

In turn, Definition \ref{se_def} characterizes the approximation's error of the solution $\xbh$ of \eqref{x_til_pr_lr} as
\begin{equation}
\label{epsilon_error_LS}
  \|\Ab\xbh-\bb\|_2 \leqslant (1+\ow(\epsilon))\|\Ab\xb^{\star}-\bb\|_2
\end{equation}
and $\|\Ab(\xb^{\star}-\xbh)\|_2\leqslant\epsilon\|(\Ib_N-\Ub\Ub^\top)\bb\|_2$ \cite{ERNM22}. Furthermore, desired properties of sketching matrices are that they are zero-mean and normalized, \textit{i.e.} $\E[\Sb]=\bold{0}_{R\times N}$ and $\E[\Sb^\top\Sb]=\Ib_N$ \cite{RE21} to guarantee unbiased estimates, which are met by normalized random matrices with i.i.d. standard Gaussian entries. Intuitively, random matrices $\Sb$ (\textit{e.g.} Gaussian, Rademacher, SRHT) acts like scramblers, and spread out information across the rows of $\Sb\Ab$ uniformly. When $\Sb$ is applied to $\Ab$, the information in any low-dimensional subspace of $\R^n$ does not collapse nor is significantly distorted.

In terms of $\ell_2$-s.e., we show that it is possible to aggregate sketches of local datasets from a distributed network, to obtain a sketch of the collective dataset as a whole (referred to as the ``\textit{global dataset}''). We work with OSEs, a category of $\ell_2$-s.e. which suit our objective. Specifically, by performing a random projection on the local datasets $\{\Ab_i\}_{i=1}^k$, we show that in the resulting global dataset $\Ab$ the transformation of each data point is of approximately equal importance, which is quantified by leverage scores. Thereby, this implies that local uniform sampling suffices. Embeddings in such data-distributed settings are useful in a variety of data pre-processing/training and unsupervised learning tasks such as clustering and PCA~\cite{guha2000clustering,gandikota2020reliable,ZV15,TSG15,TG18}.

To obtain the sketch of the global dataset, the distributed nodes send their local sketches $\{\Sb_i\Ab_i\}_{i=1}^k$ to a coordinator who aggregates them, to obtain a global sketch $\Abwh\in\R^{R\times d}$ where $R\ll N$. Depending on the choice of the random projection performed by the nodes, there is a security guarantee on their local information which prohibits potential eavesdroppers and the coordinator from recovering  the data points, which is of increasing importance in distributed machine learning. Ultimately, we obtain a summary of the global dataset in a  decentralized manner, without explicitly revealing or aggregating the local data. This approach is depicted pictorially in Fig.~\ref{fig_schematic}.

\begin{figure}[h]
  \centering
    \includegraphics[scale=.18]{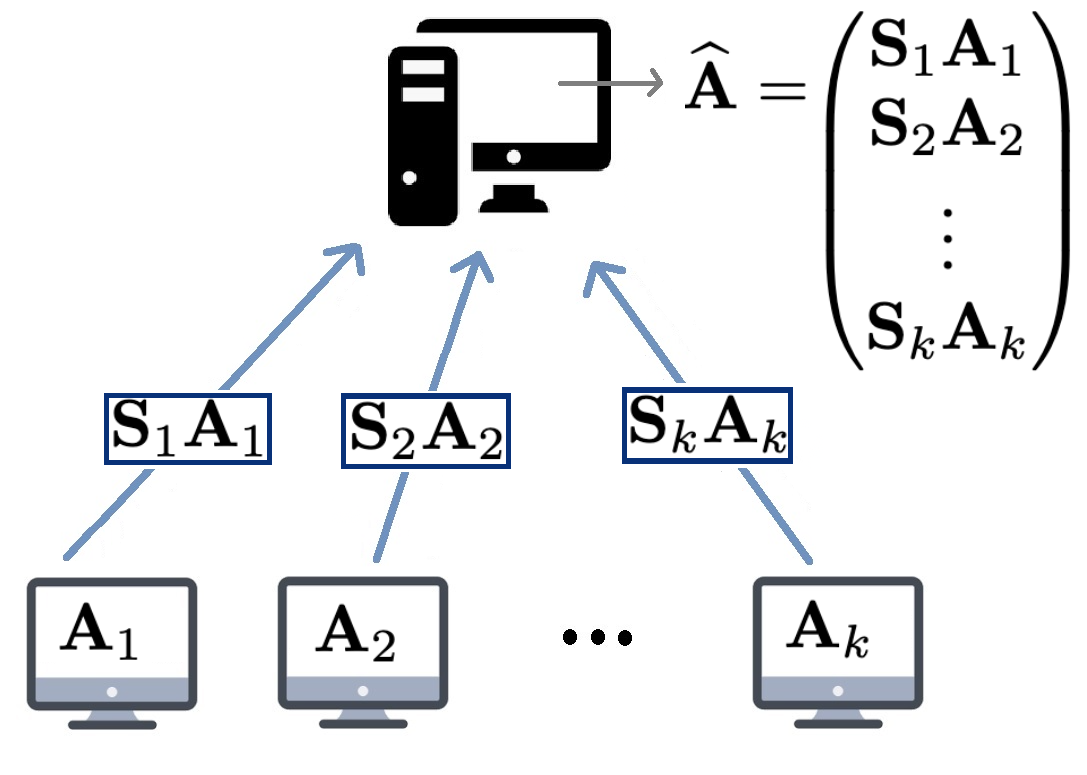}
    \caption{Schematic of distributed aggregated sketching.}
  \label{fig_schematic}
\end{figure}

In our work, a second sketching matrix $\Phib$ is applied to the global sketch $\Abwh$, to obtain further compression through hybrid sketching. Depending on the combination of sketching matrices and projections considered for the pairs $\Sb_{\{k\}}$ and $\Phib$, hybrid sketching may lead to better performance results than when applying a single sketching matrix of the same final target dimension. Specifically, the ideal combination is to first apply a fast sketching matrix (which in our case is done distributively through local sketching), and then a second one with better properties in terms of dimensionality reduction. This technique leads to further benefits on distributed sketching, and is an improvement on the work of \cite{BP23} which only considered uniform sampling for $\Sb_{\{k\}}$ and did not provide any theoretical justification to their approach. Surprisingly, in numerical experiments there was a clear distinction in terms of the empirical embedding guarantee of hybrid sketching versus standard sketching techniques.

In Table \ref{table_OSE} we summarize the most relevant results regarding OSE sketches, which we will use in Sec.~\ref{hybr_sketching_sec} for comparison to hybrid sketching, and to show when there is an improvement through our proposal. We also include sketching through leverage score sampling for completeness, which is the main tool we use for proving Theorem \ref{subsp_emb_thm_global}, though we do not consider sparse sketches. For further details on these sketching techniques, refer to the monographs \cite{Woo14,HMJ11,Mah11,Mah16,Wan15,DM17,murray2023,Nelson20,Ver18,MT20}.

\begin{center}
\begin{table*}[t]
\centering
\begin{tabular}{ |p{4.2cm}||p{4.2cm}|p{3.5cm}| }
\hline
\multicolumn{3}{|c|}{\textbf{Summary of Main Sketching Techniques}} \\
\hline
\hline
\textbf{Sketch Technique} & \textbf{Embedding Dimension $R$} & \textbf{Multiplication Time} \\
\hline
\hline
Gaussian Sketch \cite{KVZ14,Ver18} & $\ow\big((d+\log(1/\delta))/\epsilon^2\big)$ & $\ow(RNd)$ \\ \hline
ROS \cite{PW16,CMPH23} & $\Theta\big(d\log(d/\delta)/\epsilon^2\big)$ & $\ow(RNd)$ \\ \hline
SRHT \cite{AC09,DMMS11} & $\Theta\big((d\log(Nd/\delta)\log(d/\delta)/\epsilon^2\big)$ & $\ow\big(Nd\log(N)\big)$ \\ \hline
Rademacher Sketch \cite{CW09,TYUC17} & $\ow\big((d+\log(1/\delta))/\epsilon^2\big)$ & $\ow(RNd)$ \\ \hline
Uniform Sampling \cite{MT20} & $\ow\big(N\gamma(\Ab)\log(1/\delta)/\epsilon^2\big)$ & $\ow(Rd)$ \\ \hline
Leverage Scores \cite{ERNM22,CPH23b} & $\Theta\big(d\log(d/\delta)/(\beta\epsilon^2)\big)$ & $\ow(Rd)$ \\ \hline
\textit{Hybrid} (Radem. $\&$ SRHTs) & $\ow\big((d+\log(1/\delta))/\epsilon^2\big)$ & $\ow\big(Nd(\log(n)+N\mu^2\rho)\big)$ \\ \hline 
\end{tabular}
\caption{The $1^{st}$ column indicates the sketching technique. The $2^{nd}$ column shows the reduced embedding dimension of the resulting sketch, \textit{i.e.} $R$ for $\Sb\in\R^{R\times N}$ required for \eqref{subsp_emb_id} to be satisfied. The $3^{rd}$ column indicates the multiplication time it takes to apply $\Sb$ on $\Ab$, with naive matrix-matrix multiplication. ROS in the $3^{rd}$ row stands for ``Randomized Orthonormal Systems''. Parameter $\beta\in(0,1]$ in the $6^{th}$ row, is an misestimation factor that accounts for sampling according to an approximate distribution. The compression factors of the first and second sketch respectively in hybrid sketching, are respectively $\mu,\rho\in(0,1)$.}
\label{table_OSE}
\end{table*}
\end{center}

\section{Sketching Overview}
\label{randnla_prelim}

We denote $\N_m\coloneqq \{1,2,\ldots,m\}$, and $X_{\{m\}} = \{X_i\}_{i=1}^m$ where $X$ can be replaced by any variable. We consider random square projection matrices, which are represented by $\Pb$. Sampling matrices are denoted by $\Omb$, where each row has a single nonzero entry. The uniform sampling distribution $\{1/N,\ldots,1/N\}$ is denoted by $\Uu_N$, and by $\Uut_N$ we represent $\Uu_N$'s approximation through our approach. The index set of $\Ab$'s rows is denoted by $\I$, \textit{i.e.} $\I=\N_N$. The index set of $\Ab_i$ is denoted by $\I_i$ for each $i\in\N_k$, \textit{i.e.} $\I_i=\{(i-1)n+1,\ldots,in\}$ and $\I=\bigsqcup_{\iota=1}^k\I_\iota$. Similarly, by $\Scal$ we denote the index multiset of $\Ab$'s sampled rows, and $\Scal_i$ the index multiset of $\Ab_i$'s sampled rows for each $i$. By $\eb_j$ we denote the $j^{th}$ standard basis vector. The restriction of a matrix $\Mb$ to the entries of $\I_\iota$ is represented by $\Mb\big|_{\I_\iota}$, \textit{e.g.} $\Ib_n\big|_{(\{1,2\})}=(\eb_1\eb_1^\top+\eb_2\eb_2^\top)\in\R^{n\times n}$ . The $j^{th}$ row of matrix $\Mb$ is represented by $\Mb_{(j)}$. We abbreviate $(1-\epsilon)\cdot b\leqslant a \leqslant(1+\epsilon)\cdot b$ to $a\leqslant_\epsilon b$, and the complementary event of $|a-b|>\epsilon\cdot b$ to $a>_\epsilon b$. We use ``$\wedge$'' and ``$\sim$'' (over-scripts) to denote corresponding quantities of $\Ab$ in the global $\Abwh$ and hybrid $\Abwt$ sketches respectively.

\subsection{Sketching through Leverage Scores}
\label{sk_thr_lvg_scores_sec}

Many sampling algorithms select data points according to the data's \textit{leverage scores} \cite{MMY15,DMMW12}. The leverage scores of $\Ab$ measure the extent to which the vectors of its orthonormal basis $\Ub$ are correlated with the standard basis, and define the key structural non-uniformity that must be dealt with when developing fast randomized matrix algorithms, as they characterize the importance of the data points. Leverage scores are defined as $\ell_j\coloneqq\|\Ub_{(j)}\|_2^2$ and are agnostic to any particular basis, as they are equal to the diagonal entries of the projection matrix $P_\Ab=\Ab\Ab^{\dagger}=\Ub\Ub^\top$. The \textit{normalized leverage scores} of $\Ab$ for each $j\in\N_N$ are
\begin{equation*}
\label{norm_lvg_sc}
  \pi_j \coloneqq \left\|\Ub_{(j)}\right\|_2^2\big/\|\Ub\|_F^2 = \left\|\Ub_{(j)}\right\|_2^2\big/d\ ,
\end{equation*}
and $\pi_{\{N\}}$ form a sampling probability distribution, as $\sum_{\iota=1}^N\pi_\iota=1$ and $\pi_j\geqslant0$ for all $j$. This induced distribution has been proven useful in linear regression \cite{DMMW12,Woo14,Mah16,ERNM22}, as well as a plethora of other applications \cite{SS11,BZMD14,DM16,OMG22}.

The \textit{coherence} of $\Ab$ is defined as $\gamma(\Ab)\coloneqq\max_{\iota\in\N_N}\big\{\ell_\iota\big\}$. Similar to \eqref{part_data}, we partition
\begin{equation}
\label{part_U}
  \Ub=\Big[\Ub_1^\top \ \cdots \ \Ub_k^\top\Big]^\top
\end{equation}
and define the ``\textit{local coherence}'' of each data block as $\gamma_i\coloneqq\max_{j\in\I_i}\big\{\ell_j\big\}$. We denote the sum of each block's corresponding leverage scores by $\Ll_i\coloneqq\sum_{j\in\I_i}\ell_j$. It is worth noting that in our setting, the closer $\Ll_i$ and $\gamma_i$ are to $d/k$ and $d/N$ respectively, the more homogeneous the local data blocks are when they considered as a global dataset. If $\gamma_i=d/N$, $\Ub_i$ is aligned with the standard basis.

Next, we recall the leverage score sampling $\ell_2$-s.e. sketch, which is used to obtain $\Abg\coloneqq\Sbg\Ab$. Given $\Ab$, we sample $R>d$ rows \textit{with replacement} (w.r.) from $\Ab$ according to $\pi_{\{N\}}$. If at trial $j$ the row $i_j$ was sampled we rescale it by $1/\sqrt{R\pi_{i_j}}$, and set $\Abg_{(j)}=\Ab_{(i_j)}\big/\sqrt{R\pi_{i_j}}$. It is clear that here $\Sbg\in\R^{R\times N}$ is simply a sampling and rescaling matrix, \textit{i.e.} $\Sbg_{j}=\eb_{i_j}^\top\big/\sqrt{R\pi_{i_j}}$.

In many cases, estimating the leverage scores is preferred, as computing them exactly requires $\ow(Nd^2)$ time which is excessive. We can instead use accurate approximate scores $\ellt_{\{N\}}$ which can be computed in $\ow(Nd \log N)$ time \cite{DMMW12}.
The estimates are ``close'' in the following sense: $\ellt_i\geqslant\beta\ell_i$ for all $i$, where $\beta\in(0,1]$ is a misestimation factor. The only difference in sampling according to $\ellt_{\{N\}}$, is that we need to oversample by a factor of $1/\beta$ to get the same theoretical guarantee. The $\ell_2$-s.e. result of $\Sbg$ is presented next \cite{Woo14,ERNM22,CPH23b}.

\begin{Thm}
\label{lvg_score_se_thm}
The leverage score sketching matrix $\Sbg$ is a $\ell_2$-\textit{s.e} of $\Ab$. Specifically, for $\delta>0$ and $R=\Theta\left(d\log{(2d/\delta)}/(\beta\epsilon^2)\right)$, the identity of $\eqref{subsp_emb_id}$ is satisfied with probability at least $1-\delta$.
\end{Thm}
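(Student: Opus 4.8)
The plan is to recast the subspace-embedding identity \eqref{subsp_emb_id} as a matrix concentration statement and then apply a Matrix Chernoff bound. Since $\Sbg$ samples $R$ rows of $\Ub$ with replacement according to $\pi_{\{N\}}$ and rescales the row drawn at trial $t$ by $1/\sqrt{R\pi_{i_t}}$, we can write
$$(\Sbg\Ub)^\top(\Sbg\Ub) = \sum_{t=1}^R X_t, \qquad X_t \coloneqq \frac{1}{R\pi_{i_t}}\Ub_{(i_t)}^\top\Ub_{(i_t)},$$
where the $X_t$ are i.i.d. rank-one positive semidefinite matrices. First I would verify unbiasedness: taking expectation over the draw $i_t\sim\pi_{\{N\}}$ gives $\E[X_t] = \frac{1}{R}\sum_{j=1}^N\Ub_{(j)}^\top\Ub_{(j)} = \frac{1}{R}\Ub^\top\Ub = \frac{1}{R}\Ib_d$, so that $\E\big[\sum_t X_t\big]=\Ib_d$. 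Thus \eqref{subsp_emb_id} is exactly the event that $\sum_t X_t$ deviates from its mean $\Ib_d$ by at most $\epsilon$ in operator norm, and it suffices to control $\lambda_{\max}$ and $\lambda_{\min}$ of this sum.

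The decisive step --- and the reason leverage scores are the right sampling distribution --- is the uniform per-summand norm bound. Because $\pi_{i_t}=\ell_{i_t}/d=\|\Ub_{(i_t)}\|_2^2/d$, the rescaling cancels the row norm exactly:
$$\|X_t\|_2 = \frac{\|\Ub_{(i_t)}\|_2^2}{R\pi_{i_t}} = \frac{d}{R},$$
so every summand has operator norm $d/R$ \emph{regardless} of which row is drawn. This is precisely what a coherence-oblivious distribution such as $\Uu_N$ fails to achieve, where the per-sample norm could instead scale with $N\gamma(\Ab)/R$.

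With $L\coloneqq d/R$ and $\mu_{\min}=\mu_{\max}=\lambda(\Ib_d)=1$, I would invoke a Matrix Chernoff inequality for sums of independent PSD matrices (Tropp), which yields
$$\Pr\big[\lambda_{\max}(\textstyle\sum_t X_t)\geq 1+\epsilon\big]\leq d\Big(\tfrac{e^{\epsilon}}{(1+\epsilon)^{1+\epsilon}}\Big)^{1/L}, \quad \Pr\big[\lambda_{\min}(\textstyle\sum_t X_t)\leq 1-\epsilon\big]\leq d\Big(\tfrac{e^{-\epsilon}}{(1-\epsilon)^{1-\epsilon}}\Big)^{1/L}.$$
Using the standard estimate $e^{\pm\epsilon}/(1\pm\epsilon)^{1\pm\epsilon}\leq e^{-\epsilon^2/3}$ valid for $\epsilon\in(0,1)$ together with $1/L=R/d$, a union bound gives failure probability at most $2d\,e^{-\epsilon^2 R/(3d)}$. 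Setting this $\leq\delta$ and solving yields $R\geq 3d\log(2d/\delta)/\epsilon^2$; since $\|\Ib_d-\sum_t X_t\|_2=\max\{\lambda_{\max}-1,\,1-\lambda_{\min}\}$, this establishes \eqref{subsp_emb_id} with probability $1-\delta$ at $R=\Theta\big(d\log(2d/\delta)/\epsilon^2\big)$.

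Finally I would handle the approximate-scores variant. Sampling according to $\ellt_{\{N\}}$ with $\ellt_i\geq\beta\ell_i$ and rescaling by the corresponding normalized approximate probabilities $\pt_{\{N\}}$ preserves unbiasedness, but the norm bound degrades to $\|X_t\|_2\leq d/(\beta R)$ because $1/\pt_{i_t}\leq 1/(\beta\pi_{i_t})$. Re-running the Chernoff step with $L=d/(\beta R)$ replaces $R/d$ by $\beta R/d$ in the exponent, which is exactly what forces the oversampling factor $1/\beta$ and gives the stated $R=\Theta\big(d\log(2d/\delta)/(\beta\epsilon^2)\big)$. The main obstacle is not any individual estimate but ensuring the per-summand norm bound is as tight as possible: it is the leverage-score rescaling that makes $\|X_t\|_2$ uniformly small, and everything downstream is a routine application of Matrix Chernoff.
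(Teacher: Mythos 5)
Your proof is correct, and it is essentially the standard matrix-Chernoff argument: the paper itself does not prove Theorem~\ref{lvg_score_se_thm} but recalls it from \cite{Woo14,ERNM22,CPH23b}, and those sources establish it exactly as you do --- rank-one decomposition of $(\Sbg\Ub)^\top(\Sbg\Ub)$, the key observation that leverage-score rescaling makes every summand have operator norm $d/R$, Tropp's Chernoff bound, and the $1/\beta$ degradation of the norm bound for approximate scores. The only cosmetic caveat is that your step $1/\pt_{i_t}\leqslant 1/(\beta\pi_{i_t})$ implicitly uses $\sum_j\ellt_j\leqslant d$ (or, equivalently, that the misestimation condition is stated directly on the normalized sampling probabilities), which is the standard convention in the cited references.
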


\subsection{Oblivious Subspace Embeddings}
\label{ose_sec}

A drawback of directly applying leverage score sampling locally in hope of obtaining a global sketch by aggregating the local sketches, is that the local datasets may be highly heterogeneous, and the sampling performed locally may not be representative of the global leverage score sampling distribution. To alleviate this issue, we resort to OSEs, which exploit random projections and/or uniform sampling.

\begin{Def}
\label{OSE_def}
A Random matrix $\Pib\in\R^{R\times N}$ is an $(\epsilon,\delta,d)$-\textbf{oblivious subspace embedding} (OSE) if for any $d$-dimensional subspace $T=\image(\Ub)\subseteq\R^N$, it satisfies \eqref{subsp_emb_id} for $\Sb\gets\Pib$.
\end{Def}

Two prime examples of OSEs, are the Gaussian sketch and the SRHT. It is also worth noting that utilizing random Gaussian and Rademacher random matrices in data compression has close ties to the Johnson-Lindenstrauss lemma \cite{JL84,Ach03,DG03}, which predates the study of RandNLA. We note that Rademacher sketching matrices are also referred to as ``Sub-Gaussian sketches'' in the literature, since the tail of Rademacher random variables follow a sub-Gaussian decay, \textit{i.e.} its tail probabilities decay at least as fast as those of a normal distribution.

The Gaussian sketch is defined through a random projection $\Gb\in\R^{R\times N}$ where $\Gb_{ij}\sim\mathcal{N}(0,1)$, which is then rescaled to get $\Sb=\frac{1}{\sqrt{R}}\Gb$. To unify our techniques, we note that directly applying $\frac{1}{\sqrt{R}}\Gb\in\R^{R\times N}$ is equivalent to uniformly sampling (without replacement) $R$ rows from 
a $N \times N$ Gaussian matrix. This is also true for the Rademacher sketch, where $\Thb_{ij}\sim\sfsty{Unif}(-1,+1)$ and $\Sb=\frac{1}{\sqrt{R}}\Thb$. For further speedups with these unstructured projections, one could therefore directly apply $R\times N$ rescaled projections and not consider uniform sampling. A benefit of considering the uniform sampling matrix $\Omb$ being generated separately to the random projection, is that other sampling matrices may be utilized instead \cite{NN13,DLPM21}.

The SRHT is comprised of three matrices: $\Omb\in\R^{R\times N}$ a uniform sampling w.r. and rescaling matrix of $R$ rows, $\Hbbar_N$ the normalized Hadamard matrix of order $N$:
\begin{equation*}
\label{Had_matrix}
  \Hb_N = \begin{pmatrix} 1 & 1 \\ 1 & -1 \end{pmatrix}^{\otimes \log_2(N)} \qquad \Hbbar_N = \frac{1}{\sqrt{N}}\cdot\Hb_N
\end{equation*}
and $\Db\in\{0,\rpm1\}^{N\times N}$ with i.i.d. diagonal Rademacher random entries; \textit{i.e.} it is a signature matrix. If $N$ is not a power of 2, we can pad $\Ab$ with zeros to meet this requirement. The SRHT sketching matrix is then $\Sb=\sqrt{\frac{N}{R}}\cdot\Omb\Hbbar_N\Db$, where $\Hbbar_N\Db$ is a unitary matrix that rotates $\Ub$. Note that the scaling here is also consistent with the aforementioned sketching matrices, as $\Sb=\frac{1}{\sqrt{R}}\cdot\Omb\Hb_N\Db$. The main intuition of the projection is that it expresses the original signal or feature-row in the Walsh-Hadamard basis. Furthermore, $\Hbbar_N$ can be applied in $\ow(Nd\log N)$ time, by using Fourier based methods.

In the new left orthonormal basis of $\Ab$ after the aforementioned projections are applied, the resulting leverage scores are close to uniform. Hence, uniform sampling is applied through $\Omb$ to reduce the effective dimension $N$, whilst the information of $\Ab$ is maintained. An appropriate rescaling according to the number of sampling trials also takes place, in order to reduce the variance of the resulting estimator.

The idea behind our approach is that the local projections will ``flatten'' the leverage scores in $\Ab$ of their local blocks, \textit{i.e.} $\pi_{j}\approx \Ll_i/(nd)$ for each $j\in\I_i$ and every $i\in\N_k$. By then locally performing uniform row sampling on $\Pb_i\Ab_i$, we get a close to uniform sampling across all the projected blocks.

From Table \ref{table_OSE}, it is clear that there is a trade-off between the choice of OSEs. On the one hand, this can be applied efficiently but have a logarithmic dependence on $N$ when it comes to $R$; \textit{e.g.} SRHT. On the other hand, you have a slower multiplication which does not utilize Fourier methods, but has no dependence on $N$ when it comes to the number of rows needed to be sampled. Through our hybrid sketching proposal, \textit{we benefit from both approaches}, by first applying a faster sketch $\Pib_1$; \textit{e.g.} SRHT, and then a sketching matrix $\Pib_2$ which requires a lower $R$; \textit{e.g.} Rademacher sketch. We also have the benefit of the first sketch being applied distributively, without ever needing to aggregate all the data.

Since the resulting sketching matrix $\Sbh=\Pib_2\Pib_1$ is composed of two distinct linear transformations, we cannot hope to get an improvement individually on one of the two characteristics discussed above over all sketching techniques. We do however get a better balance on the two, when compared to the individual sketching approaches. By the aforementioned selection of $\Pib_1$ and $\Pib_2$ presented in the last row of Table \ref{table_OSE}, we therefore obtain an interpolation between the two extreme cases of OSEs. As we elaborate on in Sec.~\ref{hybr_sketching_sec}, a primary objective for applying a dense $\Pib_2$ in our distributed setting, is to obtain linear combinations of all data points in the final sketch, by a recombination of the local sketches. In this scenario, $\Pib_1$ can be interpreted as a sparse sketching matrix \eqref{global_sketch_S}.

\noindent \textbf{Comparison to Importance Sampling:} From Table \ref{table_OSE}, we note that in worst cases the $\log(d/\delta)$ factor in the leverage score sampling sketch which is non-oblivious, implies that leverage score sketches have a theoretically higher sample complexity $R$. However, this is a worst case scenario, which occurs when $\gamma(\Ab)\approx d/N$; \textit{i.e.} $\Ab$ has low coherence and the leverage scores across the rows are close to uniform. This is also the assumption that other works make to simplify their analyses, which we bypass through local projections, as was mentioned in Sec.~\ref{intro}. Moreover, the effect of leverage score sampling is beneficial when $\gamma(\Ab)$ is high, meaning that a small subset of rows is of greater importance than the rest, and require a lower sample complexity in practice to maintain the structure of $\Ab$. High leverage rows are critical for preserving the span of the matrix's principal components, allowing leverage score sketches to capture essential information with fewer samples than a Gaussian sketch, especially when $\Ab$ is incoherent. This point is captured in the case of uniform sampling for an $\ell_2$-s.e. in the $5^{th}$ row of Table \ref{table_OSE}, where the embedding dimension is proportional to the coherence of the subspace. In such cases, leverage score sketches can provide accurate results with fewer samples than the required $R$ indicated above, due to the fact that they then perform non-uniform targeted sampling, which often reduces the effective sample size required in practice. The Gaussian and Rademacher sketches which are OSEs drop this dependency on $\gamma(\Ab)$, thus uniform sampling suffices, and their analyses do not need to consider a worst case scenario.

\section{Distributed Local Sketching}
\label{distr_local_sk_sec}

In this section, we discuss the details of our distributed sketching scheme (Algorithm \ref{alg_distr_sketch}). In the setup of Fig.~\ref{fig_schematic}, the $i^{th}$ node applies a random projection matrix $\Pb_i\in\R^{n\times n}$ which is generated locally to flatten the corresponding leverage scores. As we will see, the flattening here is with respect to (w.r.t.) $\Ub_i$; \textit{i.e.} w.h.p. $\ell_j\approx\Ll_i/(nd)$ for each $j\in\I_i$. This is the cost we pay for performing local sketching. Nonetheless, we show in Fig.~\ref{lvg_scores_tdistr_flat_fig} that the flattening degrades gracefully as $k$ increases, even for global datasets with highly non-uniform leverage scores. To partially circumvent this concern if our approach is to be performed by a single user or centrally administered by the coordinator, a random permutation can be applied on the rows of $\Ab$ before the partitioning takes place.

After locally applying $\Pb_i$, the nodes randomly sample $r=R/k$ rows from $\Pb_i\Ab_i$ which they rescale by $\sqrt{n/r}=\sqrt{N/R}$ and aggregate through $\Omb_i$, to obtain the local sketches
\begin{equation*}
\label{local_sketch_eq}
  \Sb_i\Ab_i\in\R^{r\times d} , \quad \text{for} \quad \Sb_i=\sqrt{n/r}\cdot\left(\Omb_i\cdot\Pb_i\right)\in\R^{r\times n} .
\end{equation*}
Then, each node communicates the resulting sketch to the coordinator, who aggregates them.

\begin{algorithm}[h]
\SetAlgoLined
  \KwIn{Effective local dimension $r$ \Comment{$R>d$ and $R=rk$}}
  \KwOut{Sketch $\Abwh\in\R^{R\times d}$, of the collective dataset $\Ab$}
  \For{$i=1$ to $k$}
    {
      \underline{$i^{th}$ node}:\\
      $\ 1)$ Generate a random $\Pb_i\in\R^{n\times n}$ \Comment{$\E\left[\Pb_i^\top\Pb_i\right]=\Ib_n$}\\
      $\ 2)$ Uniformly sample $r$ rows from $\Pb_i\Ab_i$, through $\Omb_i$\\
      $\ 3)$ Deliver $\sqrt{\frac{n}{r}}\left(\Omb_i\Pb_i\right)\Ab_i\eqqcolon \Sb_i\Ab_i$ to the coordinator\\
      \textit{\underline{\textbf{note}}: $1)$ and $2)$ can be performed simultaneously by generating $\Sb_i\in\R^{r\times n}$, to reduce the local computations}
    }
    \underline{Coordinator}: Aggregates $\Abwh = \Big[(\Sb_1\Ab_1)^\top \ \cdots \ (\Sb_k\Ab_k)^\top\Big]^\top$
\caption{Distributed Local Sketching}
\label{alg_distr_sketch}
\end{algorithm}

\subsection{Analysis of our approach}
\label{analysis_subsec}

For the analysis of our approach, we note that the final sketch $\Abwh$ of $\Ab$ can be summarized by the ``\textit{global sketching}'' matrix $\Sbwh\in\R^{R\times N}$, comprised of the ``\textit{local sketching}'' matrices $\Sb_{\{k\}}$ across its diagonal:
\begin{equation}
\label{global_sketch_S}
  \overbrace{\begin{pmatrix} \Sb_1 & & \\ & \ddots & \\ & & \Sb_k \end{pmatrix}}^{\Sbwh\in\R^{R\times N}} = \sqrt{\frac{N}{R}}\cdot\overbrace{\begin{pmatrix} \Omb_1 & & \\ & \ddots & \\ & & \Omb_k \end{pmatrix}}^{\Ombh\in\{0,1\}^{R\times N}} \cdot \overbrace{\begin{pmatrix} \Pb_1 & & \\ & \ddots & \\ & & \Pb_k \end{pmatrix}}^{\Pbh\in\R^{N\times N}}
\end{equation}
where the sampled index multisets $\Scal_{\{k\}}$ correspond to the sampling matrices $\Omb_{\{k\}}$, and $\bigcup_{i=1}^k\Scal_i$ to $\Ombh$.

It is noteworthy that $\Sbwh$ can also be interpreted as a sparse sketching matrix, when carried out locally by a single server. By the block diagonal structure of $\Sbwh$, the resulting global sketch recovered by the coordinator is:
\begin{equation}
\label{global_sketch}
  \Abwh \coloneqq \Sbwh\Ab = \Big[(\Sb_1\Ab_1)^\top \ \cdots \ (\Sb_k\Ab_k)^\top\Big]^\top\in\R^{R\times d} .
\end{equation}

Note that $\sqrt{R/N}$ and $\Ombh$ commute. Since $\sqrt{R/N}\cdot\Pbh$ is a block diagonal matrix, the corresponding blocks of $\Ub$ are rotated/transformed by their respective projections, \textit{i.e.}:
\begin{equation*}
\label{part_Ut}
  \Ubh \coloneqq \left(\sqrt{N/R}\cdot\Pbh\right)\cdot\Ub = \Big[\Ubh_1^\top \ \cdots \ \Ubh_k^\top\Big]^\top
\end{equation*}
where $\Ubh_i=\big(\sqrt{n/r}\cdot\Pb_i\big)\Ub_i$ for each $i\in\N_k$. We denote the leverage scores of the projected matrix $\Pbh\Ab$ by $\ellh_j=\|\Ubh_{(j)}\|_2^2$.

Next, we show that the leverage scores of each $\Ubh_i$ are flattened w.r.t. $\Ll_i$ by the local projection. For our analysis, we assume that $\Pb_i$ is a random unitary matrix, drawn from an arbitrary large finite subset $\Otil_n(\R)$ of the set orthonormal matrices $O_n(\R)$ of size $n\times n$. Such sketching techniques, in which we first project with a unitary matrix and and then uniformly sample rows from the transformed matrix, are referred to as ``Randomized Orthonormal Systems'' \cite{PW16}. The SRHT is a special case and the most popular, due to to the fact the Hadamard matrix is well structured and can be applied efficiently. In practice, normalized Gaussian and Rademacher matrices are also widely used, as they satisfy $\E\left[\Gb^\top\Gb\right]=\Ib_N$. Analogous results can also be derived for these options of $\Pb_i$.


\begin{Lemma}
\label{exp_local_scores}
Consider a fixed $i\in\N_k$. Assume that $\Pb_i$ is arbitrarily drawn from $\Otil_n(\R)$. Then, for any $j\in\I_i$, we have $\E[\ellh_j]=\Ll_i/n$.
\end{Lemma}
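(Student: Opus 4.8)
The plan is to write each projected leverage score as a quadratic form in a single row of $\Pb_i$ and then average over the random draw of $\Pb_i$. First I would fix $j\in\I_i$ and write $j=(i-1)n+j'$ with $j'\in\N_n$ the corresponding local index. Since $\Pbh$ is block-diagonal with orthonormal blocks it is itself orthonormal, so $\Pbh\Ub$ has orthonormal columns and is a valid left orthonormal basis for $\Pbh\Ab$; because leverage scores are intrinsic to the column space (they equal the diagonal of the orthogonal projector) I may compute $\ellh_j$ directly from $\Pbh\Ub$, ignoring any scalar rescaling. For $j\in\I_i$ the $j$-th row of $\Pbh\Ub$ is exactly the $j'$-th row of $\Pb_i\Ub_i$, so writing $\pb\coloneqq\big((\Pb_i)_{(j')}\big)^\top\in\R^n$ for the (transposed) $j'$-th row of $\Pb_i$ gives
\begin{equation*}
  \ellh_j=\big\|(\Pb_i\Ub_i)_{(j')}\big\|_2^2=\pb^\top\Ub_i\Ub_i^\top\pb .
\end{equation*}

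Next I would take expectations over $\Pb_i\sim\Otil_n(\R)$. Using the cyclic property of the trace,
\begin{equation*}
  \E[\ellh_j]=\E\big[\tr\!\big(\Ub_i\Ub_i^\top\,\pb\pb^\top\big)\big]=\tr\!\big(\Ub_i\Ub_i^\top\,\E[\pb\pb^\top]\big),
\end{equation*}
so everything reduces to the second-moment matrix $\E[\pb\pb^\top]$ of a single row of a random orthonormal matrix. The key step is isotropy, namely $\E[\pb\pb^\top]=\tfrac1n\Ib_n$. Granting this, the conclusion is immediate: $\E[\ellh_j]=\tfrac1n\tr(\Ub_i\Ub_i^\top)=\tfrac1n\tr(\Ub_i^\top\Ub_i)=\tfrac1n\|\Ub_i\|_F^2$, and since $\|\Ub_i\|_F^2=\sum_{j\in\I_i}\|\Ub_{(j)}\|_2^2=\sum_{j\in\I_i}\ell_j=\Ll_i$, this equals $\Ll_i/n$ as claimed.

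The main obstacle is establishing isotropy of the rows, $\E[\pb\pb^\top]=\tfrac1n\Ib_n$, for a generic finite family $\Otil_n(\R)$, since this does not follow from orthonormality alone and requires a symmetry hypothesis on $\Otil_n$. One clean route avoids it entirely: orthonormality of $\Pb_i$ preserves the Frobenius norm, so $\sum_{j'=1}^n\big\|(\Pb_i\Ub_i)_{(j')}\big\|_2^2=\|\Pb_i\Ub_i\|_F^2=\|\Ub_i\|_F^2=\Ll_i$ holds deterministically, whence $\E\big[\sum_{j\in\I_i}\ellh_j\big]=\Ll_i$; if $\Otil_n$ is invariant under row permutations (so the rows are exchangeable) each summand has the same expectation and the claim follows by dividing by $n$. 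Alternatively, for the canonical choices one verifies isotropy directly: for the SRHT $\pb=\Db_i h$ with $h$ a row of $\Hbbar_n$ whose entries all have magnitude $1/\sqrt n$, and the diagonal Rademacher $\Db_i$ yields $\E[\pb\pb^\top]_{ab}=\E[D_aD_b]h_ah_b=\delta_{ab}/n$, i.e. $\tfrac1n\Ib_n$; for Haar-orthonormal or normalized Gaussian/Rademacher $\Pb_i$ the same identity holds by rotational symmetry. I would present the exchangeability argument as the main line and remark that the direct isotropy computation recovers it for the projections actually used.
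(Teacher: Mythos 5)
Your proof is correct and is essentially the paper's own argument: the paper likewise writes $\ellh_j$ as a trace, equates (in its step marked $\sharp$) the expectation at position $j$ with the uniform average over the positions in $\I_i$, and then uses orthonormality of $\Pb_i$ so that this average is deterministically $\tr\big(\Ubh_i\Ubh_i^\top\big)/n=\Ll_i/n$ --- precisely your exchangeability-plus-Frobenius-conservation route. The one real difference is your added rigor, and it is warranted: the paper justifies $\sharp$ only as ``the definition of expectation,'' whereas you correctly observe that this step is an exchangeability/isotropy hypothesis that does not follow from orthonormality for an arbitrary finite $\Otil_n(\R)$ (take $\Otil_n(\R)=\{\Ib_n\}$, so $\ellh_j=\ell_j$ need not equal $\Ll_i/n$), so the draw of $\Pb_i$ must be assumed permutation-invariant or row-isotropic --- a hypothesis your proposal makes explicit and verifies for the SRHT, Haar, and Gaussian/Rademacher projections actually used.
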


\begin{Prop}
\label{norm_lvg_prop}
For a fixed $i\in\N_k$ and $\xi>0$, the normalized (w.r.t. $\Ubh$) leverage scores $\{\ellb_j\}_{j\in\I_i}$ corresponding to $\Ubh_i$ satisfy
$$ \Pr\left[|\ellb_j-\Ll_i/(nd)|<\zeta\right] \geqslant 1-\xi $$ 
for any $\zeta \geqslant \zeta' \coloneqq \frac{\Ll_i}{d}\sqrt{\log(2/\xi)/2}$.
\end{Prop}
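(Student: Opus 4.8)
The plan is to reduce the two-sided deviation bound to a single-variable sub-Gaussian tail estimate for the rotated leverage score $\ellh_j$. Recall that $\ellh_j$ is the $j$-th leverage score of the rotated basis $\Pbh\Ub$, which is again orthonormal since $\Pbh$ is unitary, so its normalization is $\ellb_j = \ellh_j/\|\Pbh\Ub\|_F^2 = \ellh_j/d$. By Lemma~\ref{exp_local_scores} we have $\E[\ellh_j]=\Ll_i/n$, hence $\E[\ellb_j]=\Ll_i/(nd)$, which is exactly the center appearing in the claim. It therefore suffices to prove, for each fixed $j\in\I_i$, that $\ellh_j$ concentrates around $\Ll_i/n$ at the sub-Gaussian scale $\Ll_i$; the stated estimate then follows by rescaling deviations by $1/d$.

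First I would write $\ellh_j$ as a quadratic form. Fix $j\in\I_i$ with local index $j'$, and let $\vb\in\R^n$ be the transpose of the $j'$-th row of the sampled orthonormal matrix $\Pb_i$; then $\|\vb\|_2=1$ holds deterministically for every draw from $\Otil_n(\R)$, and
\[ \ellh_j = \|(\Pb_i\Ub_i)_{(j')}\|_2^2 = \vb^\top\Ub_i\Ub_i^\top\vb = \vb^\top\Mb\vb, \qquad \Mb\coloneqq\Ub_i\Ub_i^\top\succeq 0 . \]
The crucial observation is the range bound: since $\Mb$ is positive semidefinite, $\lambda_{\max}(\Mb)\leqslant\tr(\Mb)=\|\Ub_i\|_F^2=\Ll_i$, so that $0\leqslant\ellh_j\leqslant\Ll_i$ holds almost surely, i.e.\ $\ellh_j$ lies in an interval of length at most $\Ll_i$ irrespective of which unitary is drawn. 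Because a bounded random variable is sub-Gaussian, Hoeffding's lemma bounds its centered moment generating function by $\E[e^{s(\ellh_j-\E\ellh_j)}]\leqslant e^{s^2\Ll_i^2/8}$, and the Chernoff method applied to both tails yields
\[ \Pr\big[|\ellh_j-\E\ellh_j|\geqslant t\big]\leqslant 2\exp\!\big(-2t^2/\Ll_i^2\big) . \]

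To finish, I would undo the normalization: since $|\ellb_j-\Ll_i/(nd)| = |\ellh_j-\Ll_i/n|/d$, taking $t=d\zeta$ gives $\Pr[|\ellb_j-\Ll_i/(nd)|\geqslant\zeta]\leqslant 2\exp(-2d^2\zeta^2/\Ll_i^2)$; setting the right-hand side equal to $\xi$ and solving for $\zeta$ recovers the threshold $\zeta'=\frac{\Ll_i}{d}\sqrt{\log(2/\xi)/2}$, and passing to the complementary event gives probability at least $1-\xi$ for every $\zeta\geqslant\zeta'$. I expect the main obstacle to be pinning down the correct scale of the tail. The natural instinct is to invoke concentration of measure for the Lipschitz map $\vb\mapsto\vb^\top\Mb\vb$ on the sphere, but this introduces a dimension factor $n$ in the exponent and fails to reproduce $\zeta'$. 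The trick that makes the sub-Gaussian parameter come out as $\Ll_i$ is to forgo all distributional structure of $\Pb_i$ beyond the mean (which Lemma~\ref{exp_local_scores} already supplies) and rely solely on the deterministic range bound $\lambda_{\max}(\Mb)\leqslant\tr(\Mb)=\Ll_i$ together with the generic sub-Gaussianity of bounded random variables.
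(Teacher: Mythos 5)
Your proof is correct and takes essentially the same route as the paper: treat $\ellh_j$ as a bounded random variable in $[0,\Ll_i]$, use Lemma~\ref{exp_local_scores} for the mean $\Ll_i/n$, apply Hoeffding's inequality to the centered variable, and rescale by $1/d$ to solve for the threshold $\zeta'$. The only addition is that you explicitly justify the range bound $0\leqslant\ellh_j\leqslant\Ll_i$ via the quadratic-form estimate $\vb^\top\Mb\vb\leqslant\lambda_{\max}(\Mb)\leqslant\tr(\Mb)=\Ll_i$, which the paper asserts without proof.
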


In Fig.~\ref{lvg_scores_tdistr_flat_fig}, we show numerically the flattening of the normalized leverage scores of a random $\Ab\in\R^{2000\times 40}$ following a $t$-distribution, which scores were highly non-uniform. In this experiments, $\Pb_{\{k\}}$ were random Gaussian matrices. As noted previously, random Gaussian matrices are good surrogates for unitary random matrices, and are widely used in practice as they are approximately orthogonal. Furthermore, we observe that the flattening degrades gracefully as $k$ increases, and if a random permutation on $\Ab$'s rows is applied before $\Pbh$, we have slightly better results for each $k$. Analogous simulation results were observed when the $\Pb_i$'s were randomized Hadamard transforms, random unitary, and Rademacher random matrices.

The choice for presenting the flattening of leverage scores when $\Ab\sim t$-distribution is to convey that this occurs even when the scores are highly non-uniform, which is central to our approach. Since flattening occurs w.h.p. when the data points are inhomogeneous, we expect this to also be true with an even higher probability when they are homogeneous. Further experimental verification, considering other synthetic data matrices, can be found in Appendix \ref{app_flattening}.

\begin{figure}[h]
  \centering
    \includegraphics[scale=.3]{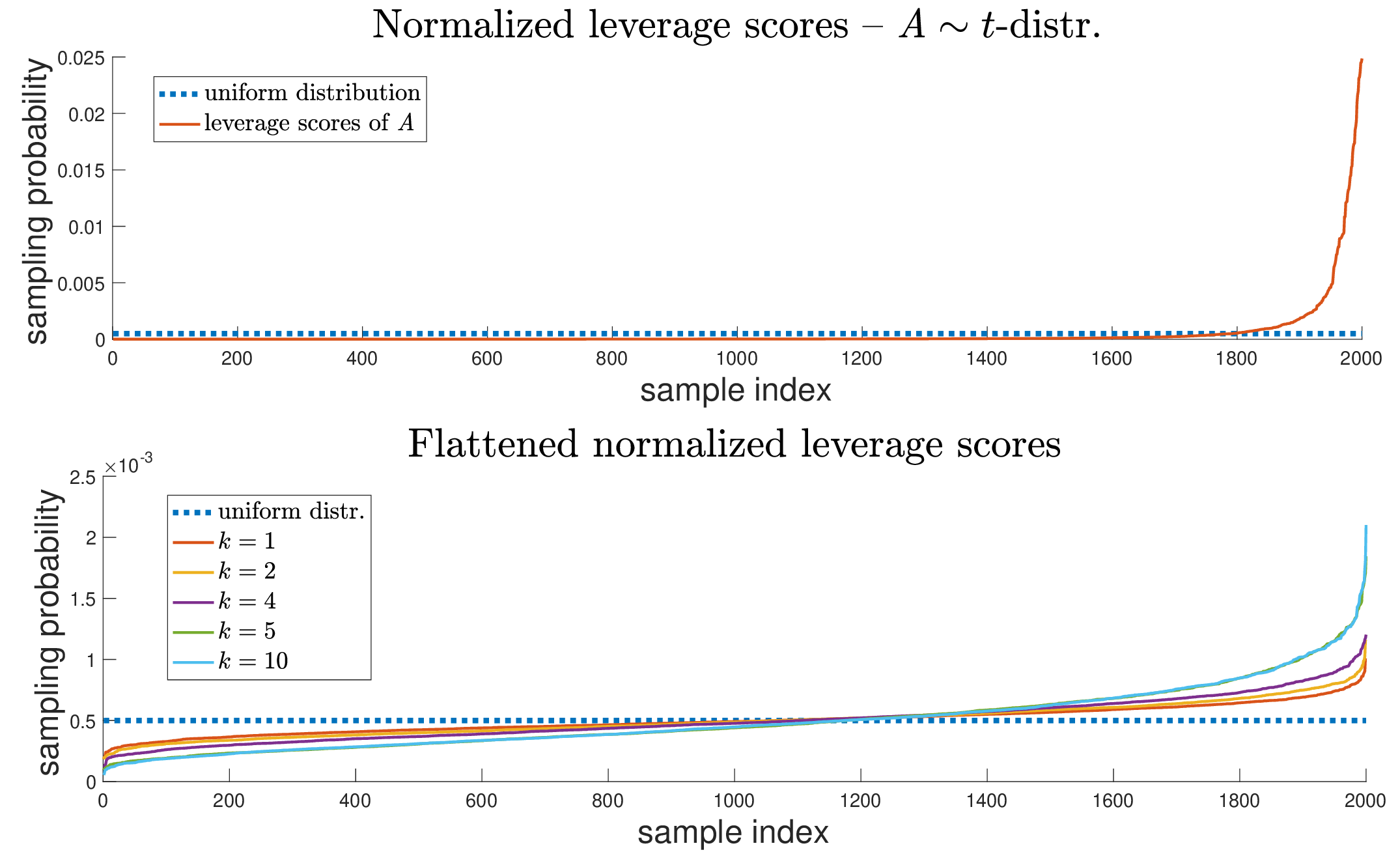}
    \caption{Flattening of leverage scores distribution, for $\Ab\sim t$-distribution.}
  \label{lvg_scores_tdistr_flat_fig}
\end{figure}

By Proposition \ref{norm_lvg_prop}, the quality of the flattening approximations of the local blocks depends on the sum of the local leverage scores $\Ll_i$.
To give better theoretical guarantees for our approach and analysis, we could make the assumption that $\Ll_i\approx d/k$ for each $i$. Such an assumption is weaker to analogous assumptions in distributed sketching algorithms which assume that $\Ab$ has low coherence, \textit{e.g.} \cite{WRXM18,NDV19,GMM20,NVR22,AN24}. This is akin to assuming that $\gamma_i\approx d/N$ for each $i\in\N_k$. We believe that it is not possible to improve on the local flattening algorithmically without exchanging information or aggregating the data a priori, as the objective is to flatten the scores of the collective $\Ab$. Further investigating this is worthwhile future work. We alleviate this concern by oversampling according to an appropriate misestimation factor $\betah$.

Before we present our main result, we need to show that $\Omb$ is close to being a uniform sampling (w.r.) matrix of $R$ out of $N$ rows. In Proposition \ref{prop_balls_bins}, we show this by applying Chebyshev's inequality to the balls into bins problem. Specifically, it shows that w.h.p. the sampling of $\Scal$ in $\I$ is close to a sampling of $\bigcup_{i=1}^k\Scal_i$, \textit{i.e.} the cardinality of sampled indices of $\Scal$ that lie in any $\I_i$ is not far from $\#\Scal_i=r$. The important factors to note are that all sampling trials in both scenarios are uniform, identical, independent and with replacement.

\begin{Prop}
\label{prop_balls_bins}
Partition the sampled index set $\Scal$ into ordered partitions $S_i$ of $\I$ according to $\I_{\{k\}}$, \textit{i.e.} $S_i=\Scal\bigcap\big(\bigcup_{l=1}^R\I_i\big)$. Then, for any $i\in\N_k$: $\Pr\big[|\#S_i-r|\geqslant10\big]\leqslant1/100$.
\end{Prop}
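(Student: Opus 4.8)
The plan is to read this as a textbook balls-into-bins calculation and to control the fluctuation of each bin count with Chebyshev's inequality, exactly as the surrounding text anticipates. The idealized comparison sampling $\Scal$ draws $R$ row-indices uniformly with replacement from the $N$ rows of $\Ab$, and since each block $\I_i$ contains $n=N/k$ of those rows, a single draw lands in $\I_i$ with probability $n/N=1/k$, independently of the others. I would therefore first write $\#S_i=\sum_{l=1}^{R}X_l$, where $X_l\coloneqq\mathbf{1}\{\text{the }l\text{-th draw lands in }\I_i\}$ are i.i.d. $\Bern(1/k)$ variables; this makes precise the sense in which $\#S_i$ is the number of balls falling in bin $i$ out of $R$ throws into $k$ equiprobable bins.

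Next I would compute the first two moments of $\#S_i$. By linearity, $\E[\#S_i]=R/k=r$, confirming that the local count is centered exactly at the target $r=R/k$. By independence of the trials the variances add, giving
\begin{equation*}
\Var[\#S_i]=\sum_{l=1}^{R}\Var[X_l]=R\cdot\tfrac{1}{k}\Big(1-\tfrac{1}{k}\Big)=r\Big(1-\tfrac{1}{k}\Big)\leqslant r .
\end{equation*}
Applying Chebyshev's inequality to the centered variable $\#S_i-r$ with threshold $10$ then yields $\Pr\big[|\#S_i-r|\geqslant 10\big]\leqslant \Var[\#S_i]/10^2=r(1-1/k)/100$.

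The step I expect to be the genuine obstacle is matching the \emph{absolute} constants in the claimed bound. Chebyshev delivers $r(1-1/k)/100\leqslant r/100$ for the threshold $10$, which equals $1/100$ only when $r=\ow(1)$; so the statement as written is cleanest in the small-$r$ regime. By contrast, choosing the threshold $t=10\sqrt{r}$ gives $\Pr\big[|\#S_i-r|\geqslant 10\sqrt{r}\big]\leqslant r/(100r)=1/100$ with no restriction on $r$, so I suspect the intended deviation scales like $\sqrt{r}$ (equivalently, the natural formulation is the relative bound $\Pr[|\#S_i-r|\geqslant \theta r]\leqslant(1-1/k)/(\theta^2 r)$, whose right-hand side shrinks as $r$ grows). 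Beyond this bookkeeping of constants, the only substantive thing to verify is the independence of the indicators $X_l$, which rests squarely on the sampling being uniform, identical, independent, and with replacement as the text stresses; that is precisely what lets the Bernoulli variances add and closes the argument in one line.
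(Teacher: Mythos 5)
Your argument is the same as the paper's: cast the sampling as $R$ balls thrown into $k$ equiprobable bins, write $\#S_i$ as a sum of $R$ i.i.d.\ $\Bern(1/k)$ indicators, compute the first two moments, and close with Chebyshev. The one place you diverge is the variance, and there you are right and the paper is not. The paper's proof asserts $\Var[Y^i]=(R-1)/R$, which is the binomial variance one would obtain with success probability $1/R$ rather than $1/k$ (i.e.\ $R\cdot\tfrac{1}{R}(1-\tfrac{1}{R})$); the correct value is $R\cdot\tfrac{1}{k}\bigl(1-\tfrac{1}{k}\bigr)=r(1-1/k)$, exactly as you computed. With the paper's (sub-unit) variance, Chebyshev at threshold $10$ does yield the advertised $1/100$; with the correct variance it yields $r(1-1/k)/100$, so the proposition as stated only follows when $r(1-1/k)\leqslant 1$. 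Indeed, for large $r$ the statement is false outright: the standard deviation of $\#S_i$ is $\Theta(\sqrt{r})$, so a fixed window of width $10$ around $r$ captures vanishing probability mass and $\Pr\bigl[|\#S_i-r|\geqslant 10\bigr]\to 1$. Your proposed repair --- threshold $10\sqrt{r}$, or equivalently a relative-deviation bound $\Pr[|\#S_i-r|\geqslant\theta r]\leqslant (1-1/k)/(\theta^2 r)$ --- is the natural correct formulation, and it is also all that the downstream argument (Theorem~\ref{subsp_emb_thm_global}, which only needs $\#S_i$ to concentrate near $r$) actually requires. In short: there is no gap in your proof; the constant-level discrepancy you flagged is a genuine error in the paper's variance computation and in the constants of the stated proposition.
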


\begin{Rmk}
In order to get an exact global sampling index set for $\I$, the coordinator could determine $\Scal$ locally and then request the nodes to send their respective projected rows, or a subset of corresponding cardinality. In essence, this is similar to the ``unique sampling matrix $\Rb$'' of the block-SRHT \cite{BBGL23}.
\end{Rmk}

Next, we provide our main result regarding the $\ell_2$-s.e. of the aggregated dataset $\Ab$, through distributed local sketching.

\begin{Thm}
\label{subsp_emb_thm_global}
Let $\Pb_{\{k\}}$ of Algorithm \ref{alg_distr_sketch} be random unitary matrices, and $\betah=\frac{k}{d}\cdot\min_{i\in\N_k}\left\{\Ll_i\right\}$. Then, for $\delta>0$ and $R=\Theta\left(d\log{(2d/\delta)}/(\betah\epsilon^2)\right)$, the sketching matrix $\Sbwh$ of the global $\Ab$, satisfies \eqref{subsp_emb_id} with probability at least $1-\Theta(\delta)$.
\end{Thm}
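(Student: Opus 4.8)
The plan is to reduce the claim to the leverage-score sketching guarantee of Theorem~\ref{lvg_score_se_thm} applied to the rotated matrix $\Pbh\Ab$, exploiting that the block-diagonal projection $\Pbh$ of \eqref{global_sketch_S} is itself unitary.

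First I would rewrite the embedding condition to eliminate the projection. Since each $\Pb_i$ is unitary, $\Pbh$ is unitary on $\R^N$, so $\Pbh\Ub$ is an orthonormal basis of the rotated subspace $\Pbh\cdot\image(\Ub)$. Writing $\Sbwh=\sqrt{N/R}\,\Ombh\Pbh$ and pulling the scalar out, the quantity controlling \eqref{subsp_emb_id} becomes $\|\Ib_d-(\Sbwh\Ub)^\top(\Sbwh\Ub)\|_2=\|\Ib_d-\tfrac{N}{R}(\Ombh\Pbh\Ub)^\top(\Ombh\Pbh\Ub)\|_2$. Thus verifying the $\ell_2$-s.e.\ of $\Ab$ under $\Sbwh$ is equivalent to verifying that the uniform-sampling-and-rescale sketch $\sqrt{N/R}\,\Ombh$ is an $\ell_2$-s.e.\ of the rotated matrix $\Pbh\Ab$, whose orthonormal basis is $\Pbh\Ub$. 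This turns the problem into a pure sampling statement on a matrix whose leverage scores we can control.

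Second, I would certify that uniform sampling is a good surrogate for leverage-score sampling on $\Pbh\Ab$. By Lemma~\ref{exp_local_scores} and Proposition~\ref{norm_lvg_prop}, the leverage scores $\ellh_j$ of $\Pbh\Ab$ are flattened within each block, $\ellh_j\approx\Ll_i/n$ for $j\in\I_i$, so the normalized scores lie close to the uniform value $1/N$ up to the deviation $\zeta$ of Proposition~\ref{norm_lvg_prop} and the block imbalance encoded by the $\Ll_i$. Meanwhile, Proposition~\ref{prop_balls_bins} lets me replace the stratified draw (exactly $r$ rows per block) by a genuine global uniform draw of $R$ rows with replacement, since the per-block counts concentrate around $r$. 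I would then interpret global uniform sampling of $\Pbh\Ab$ as approximate leverage-score sampling, with misestimation factor equal to the smallest ratio of the uniform probability $1/N$ to the true normalized leverage probability $\ellh_j/d$. Once the flattening is in force this ratio is $\approx d/(k\Ll_i)$, so it is governed by the block leverages $\Ll_i$ and reduces to the stated $\betah=\tfrac{k}{d}\min_{i\in\N_k}\{\Ll_i\}$ in the near-balanced regime $\Ll_i\approx d/k$ emphasized earlier. Invoking Theorem~\ref{lvg_score_se_thm} with this $\betah$ and $R=\Theta\!\big(d\log(2d/\delta)/(\betah\epsilon^2)\big)$ then yields \eqref{subsp_emb_id} for $\Pbh\Ab$, hence for $\Ab$ under $\Sbwh$.

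Finally I would assemble the failure probabilities by a union bound: the event that the flattening of Proposition~\ref{norm_lvg_prop} fails, the event that the balls-into-bins counts of Proposition~\ref{prop_balls_bins} deviate, and the event that the leverage-score sketch of Theorem~\ref{lvg_score_se_thm} fails, each contributing $O(\delta)$, for a total of $1-\Theta(\delta)$. The main obstacle is the middle step: pinning down the exact dependence of $\betah$ on the $\Ll_i$ and, crucially, upgrading the flattening of Proposition~\ref{norm_lvg_prop} from a \emph{per-row} high-probability statement to one that holds \emph{simultaneously over all $N$ rows}. Controlling the coherence of $\Pbh\Ab$ uniformly --- so that no row's leverage score overruns its uniform sampling budget by more than a $1/\betah$ factor --- requires a union bound whose cost must be absorbed into the logarithmic term of $R$, and reconciling this worst-case requirement of the matrix-concentration argument behind Theorem~\ref{lvg_score_se_thm} with the average-case flattening is where the delicate bookkeeping lies.
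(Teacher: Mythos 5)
Your proposal retraces the paper's own proof: flatten the rotated basis's leverage scores via Lemma~\ref{exp_local_scores} and Proposition~\ref{norm_lvg_prop}, pass from the stratified per-block draw to a global uniform draw via Proposition~\ref{prop_balls_bins}, reinterpret uniform sampling of $\Pbh\Ab$ as approximate leverage-score sampling, and close by invoking Theorem~\ref{lvg_score_se_thm} with an oversampling factor and a union bound over the failure events. Your opening reduction (unitarity of $\Pbh$ turns the claim into a pure sampling statement about $\Pbh\Ub$) is implicit in the paper, which works directly with $\Ubh$.

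The one substantive divergence is the misestimation factor, and it cuts both ways. You take $\betah$ to be the smallest ratio of the sampling probability to the true flattened score, $\min_{i\in\N_k}\frac{1/N}{\Ll_i/(nd)}=\frac{d}{k\max_i\Ll_i}$; this is the direction the guarantee behind Theorem~\ref{lvg_score_se_thm} actually needs, since the underlying matrix-concentration argument is controlled by $\max_j \pi_j/q_j$ --- equivalently, the sampling probabilities must dominate $\betah$ times the true scores, matching the paper's own convention $\ellt_i\geqslant\beta\ell_i$ with oversampling $1/\beta$. The paper's proof instead computes $\min_{i\in\N_k}\frac{\Ll_i/(nd)}{1/N}=\frac{k}{d}\min_i\Ll_i$, the reciprocal-type ratio, which is the constant appearing in the theorem statement. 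The two coincide only in the near-balanced regime $\Ll_i\approx d/k$: for $k=3$, $(\Ll_1,\Ll_2,\Ll_3)=(2,2,5)$ and $d=9$, yours is $3/5$ while the paper's is $2/3$, and in general they can differ by a factor of order $k$. So, as written, your argument proves the theorem with $\betah$ replaced by $\frac{d}{k\max_i\Ll_i}$, and your appeal to the near-balanced regime to recover the stated constant imports an assumption the theorem does not make --- strictly speaking, a gap against the statement. But it is a gap inherited from (and, in the ratio direction, corrected relative to) the paper: the paper's computation inverts the ratio its own sampling guarantee requires, so the stated $\betah$ is only supported where the two ratios agree. Your closing worry is also well founded: Proposition~\ref{norm_lvg_prop} is a per-row statement, and making the flattening hold simultaneously for all $N$ rows (so that no row's score overruns its uniform sampling budget) needs a union bound whose cost must land in the logarithmic factor of $R$; the paper never pays it, asserting only that $\ellb_j\approx\Ll_i/(nd)$ holds w.h.p.\ for every $j$.
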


\begin{Cor}
\label{subsp_emb_cor}
Consider $\frac{1}{\sqrt{r}}\Gb_{\{k\}}$ rescaled random Gaussian matrices, and perform Gram-Schmidt to each projection to obtain $\Pbh_{\{k\}}$. Then, for $\delta>0$ and $R=\Theta\left(d\log{(2d/\delta)}/(\betah\epsilon^2)\right)$, the sketching matrix $\Sbwh$ of the global $\Ab$, satisfies \eqref{subsp_emb_id} with probability at least $1-\Theta(\delta)$.
\end{Cor}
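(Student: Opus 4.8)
The plan is to show that the orthonormalized Gaussian projections $\Pbh_{\{k\}}$ satisfy the hypotheses of Theorem~\ref{subsp_emb_thm_global}, so that the corollary follows with no new estimates. The single fact I would invoke is the classical one that if $\Gb_i\in\R^{n\times n}$ has i.i.d.\ standard Gaussian entries, then the orthogonal factor produced by the Gram--Schmidt process — equivalently, the orthogonal factor of a $QR$ decomposition under a fixed sign convention on the diagonal of the triangular factor — is distributed according to the Haar measure on $O_n(\R)$. Here the Gaussian matrix is almost surely nonsingular, so the process does not break down, and the prefactor $1/\sqrt{r}$ is immaterial: Gram--Schmidt renormalizes each vector it produces, so rescaling the input by any positive constant leaves $\Pbh_i$ unchanged. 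Hence each $\Pbh_i$ is a genuine orthogonal matrix whose law is rotationally invariant, meaning $\Pbh_i$ and $\Qb\Pbh_i$ have the same distribution for every fixed $\Qb\in O_n(\R)$.

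I would then observe that rotational invariance is the only distributional property of the local projections that the proof of Theorem~\ref{subsp_emb_thm_global} consumes. Indeed, Lemma~\ref{exp_local_scores} derives $\E[\ellh_j]=\Ll_i/n$ purely from the coordinate symmetry of the law of $\Pb_i$, and the tail bound of Proposition~\ref{norm_lvg_prop} rests on the same symmetry; both therefore hold verbatim with $\Pbh_i$ in place of $\Pb_i$, yielding the flattening $\ellh_j\approx\Ll_i/(nd)$ on each block $\I_i$. The balls-into-bins estimate of Proposition~\ref{prop_balls_bins} concerns only the uniform sampling matrices $\Omb_{\{k\}}$ and is oblivious to the choice of projection, so it transfers unchanged. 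Substituting the flattened scores into the leverage-score guarantee of Theorem~\ref{lvg_score_se_thm}, exactly as in the proof of Theorem~\ref{subsp_emb_thm_global} and with the same misestimation factor $\betah=\tfrac{k}{d}\min_{i\in\N_k}\{\Ll_i\}$, then delivers \eqref{subsp_emb_id} for $\Sbwh$ with $R=\Theta\big(d\log(2d/\delta)/(\betah\epsilon^2)\big)$ and failure probability $\Theta(\delta)$.

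The one point requiring care — and the step I expect to be the main obstacle — is that Theorem~\ref{subsp_emb_thm_global} is phrased for $\Pb_i$ drawn from a large \emph{finite} subset $\Otil_n(\R)$, whereas Gram--Schmidt of a Gaussian produces a \emph{continuous}, Haar-distributed orthogonal matrix. I would reconcile this in either of two equivalent ways. The first is to argue directly in the continuous setting: the expectation in Lemma~\ref{exp_local_scores} and the concentration in Proposition~\ref{norm_lvg_prop} are computed from moments of $\Pbh_i$ that are identical under Haar measure, and in fact the continuous case is cleaner, since these identities hold exactly rather than up to a discretization error. The second, to remain literally within the finite framework, is to replace Haar by the uniform distribution on a sufficiently fine net of $O_n(\R)$; as every quantity entering the proof depends continuously, indeed Lipschitz-ly, on $\Pbh_i$, a fine enough net absorbs the approximation into the $\Theta(\delta)$ slack without altering the order of $R$. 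Either route closes the argument.
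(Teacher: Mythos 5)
Your proposal is correct and follows essentially the same route as the paper: Gram--Schmidt applied to the Gaussian matrices yields Haar-distributed random unitary matrices (with the $1/\sqrt{r}$ rescaling immaterial), and the claim then follows directly from Theorem~\ref{subsp_emb_thm_global}. Your additional care about the rescaling invariance and the finite-subset $\Otil_n(\R)$ versus continuous Haar technicality goes beyond the paper's two-line proof, which simply asserts the resulting $\Pbh_i$ is a random unitary matrix drawn from $O_n(\R)$ and invokes the theorem.
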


We point out that the failure probability of Theorem \ref{subsp_emb_thm_global} is higher than that of Theorem \ref{lvg_score_se_thm}, as there is also a source of error from Proposition \ref{prop_balls_bins} and the flattening of the leverage scores. Therefore, the higher $k$ is, the greater the failure probability is. Experimentally though, we observe that the increase in error is not drastic. Furthermore, if we were to assume that $\Ll_i\approx d/k$, \textit{i.e.} the local datasets are homogeneous w.r.t. $\Ub$, then the misestimation factor $\betah$ would be close to 1. For the general setting we are considering, it is best to avoid such assumptions in practice.

An interesting question is whether one can estimate $\Ll_{\{k\}}$, without directly aggregating or sharing the data. If so, the flattening of the scores through the local sketches could potentially be improved. Alternatively, each $\Scal_{\{k\}}$ can have proportional size to the corresponding $\Ll_i$, which would remove the oversampling factor of $1/\betah$ from Theorem \ref{subsp_emb_thm_global}.

Furthermore, the approach of Corollary \ref{subsp_emb_cor} suggests that each node performs a Gram-Schmidt process on its generated random matrix. The benefits of this is for the analysis of our technique, and can be avoided in practice, as these matrices satisfy $\E[\Sb^\top\Sb]=\Ib_N$.

It is worth noting the effect of parameter $k$. On one end of the spectrum when $k=1$, our method is equivalent to having a regular sketching matrix; \textit{e.g.} SRHT, as $\Pb_1=\Pbh\in\R^{N\times N}$ performs the random projection and $\Omb_1=\Ombh\in\{0,1\}^{R\times N}$ samples from the rotated/transformed data matrix. On the other end, $k$ should not be set too high w.r.t. $R$. Specifically, we want $r=R/k\gtrsim d$ as otherwise $\Sb_i\Ab_i$ locally would not satisfy Definition \ref{se_def}, and the oversampling factor $1/\hat{\beta}$ would be too high, \textit{i.e.} could result in $R>N$ in order to meet the sample complexity of Theorem \ref{subsp_emb_thm_global}. In a practical distributed setting, $k$ represents a physical constraint of the network, through which such an issue would not occur. Furthermore, in our experiments, we note that the performance of our method degrades gracefully for reasonable choices of $k$.

In the next proposition we further justify why we consider OSEs for our hybrid approach, other than the fact that they provide privacy benefits. The caveat though is that we require oversampling according to $\betah$, which seems to be inevitable with sampling based OSEs. We also note that even though $\Sbwh$ in Theorem \ref{hybrid_thm} cannot be a non-OSE $\ell_2$-s.e., $\Phib$ can be.

\begin{Prop}
\label{prop_non_obl}
We cannot apply non-oblivious local sketching without further communication rounds, or information sharing among the server nodes, in hope of getting a global sketch $\Abwh \coloneqq \Sbwh\Ab$ of $\Ab$ (according to \eqref{global_sketch}), w.r.t. \eqref{subsp_emb_id}.
\end{Prop}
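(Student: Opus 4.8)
The plan is to combine the structural constraint that the no-communication requirement imposes on $\Sbwh$ with the data-dependence that is intrinsic to any non-oblivious sketch. First I would invoke \eqref{global_sketch_S}: without communication or information-sharing between the nodes, the only admissible global sketches are block-diagonal, so each local factor must be of the form $\Sb_i=f_i(\Ab_i)$, i.e. a (possibly randomized) map of the local block $\Ab_i$ together with local randomness, and of nothing else. In particular, the sampling-and-rescaling law that node $i$ applies to the rows indexed by $\I_i$ is necessarily a function of $\Ab_i$ alone.

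Second, I would recall what a non-oblivious $\ell_2$-s.e. actually needs. By Theorem~\ref{lvg_score_se_thm}, the guarantee for a sampling sketch rests on drawing each row $j$ with probability proportional to its \emph{global} leverage score $\ell_j$ (equivalently $\pi_j=\ell_j/d$). Writing the reduced SVD $\Ab=\Ub\Sigb\Vb^\top$ and using $\Ub_{(j)}=\Ab_{(j)}\Vb\Sigb^{-1}$, the leverage scores admit the closed form
\begin{equation*}
  \ell_j=\Ab_{(j)}\big(\Ab^\top\Ab\big)^{-1}\Ab_{(j)}^\top,\qquad \Ab^\top\Ab=\sum_{\iota=1}^k\Ab_\iota^\top\Ab_\iota .
\end{equation*}
The crux is that for $j\in\I_i$ the score $\ell_j$ is governed by the \emph{global} Gram matrix $\Ab^\top\Ab$, a sum over all $k$ blocks, and is therefore not a function of $\Ab_i$ in isolation---exactly the information a node lacks absent communication.

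Third, I would convert this into a clean impossibility via two global datasets that agree on the $i$-th block but disagree on the leverage scores they induce over $\I_i$; since any local rule $f_i$ returns the \emph{same} $\Sb_i$ for both, it cannot match the two distinct required distributions at once. A concrete instance with $k=2$, $d=2$, $n=2$ takes $\Ab_1=\Ib_2$ in both cases, with the second block equal to $\Ib_2$ for $\Ab$ and to the row block $\mathrm{diag}(10,0)$ for $\Ab'$; a direct computation gives $(\Ab^\top\Ab)^{-1}=\tfrac12\Ib_2$ versus $\big((\Ab')^\top\Ab'\big)^{-1}=\mathrm{diag}(\tfrac{1}{101},1)$, so the rows of $\I_1$ carry the balanced leverage $(\tfrac12,\tfrac12)$ under $\Ab$ but the sharply skewed $(\tfrac{1}{101},1)$ under $\Ab'$. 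Hence no deterministic or randomized map of $\Ab_1$ can reproduce the correct leverage law in both cases, and the sampling condition underlying Theorem~\ref{lvg_score_se_thm} must fail for at least one admissible global $\Ab$. The same obstruction recurs at the coarser level of the per-block sampling budget: a valid leverage sketch should allocate samples across blocks in proportion to $\Ll_i=\sum_{j\in\I_i}\ell_j$, yet enlarging the second block uniformly drives $\Ll_1\to0$ while leaving $\Ab_1$ untouched, so these relative masses are equally unavailable locally.

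The main obstacle I anticipate is not the counterexample but pinning down \emph{non-oblivious} tightly enough that the argument is airtight---one must exclude the possibility that some ingenious data-dependent local rule incidentally mimics the global leverage law. The functional-dependence argument above resolves this: since the correct distribution on $\I_i$ is provably \emph{not} a function of $\Ab_i$ (by the two-dataset construction), no local map can equal it identically, so the non-oblivious guarantee is unattainable. This is precisely what separates the non-oblivious case from the oblivious one, where an OSE uses a data-independent law that embeds every $d$-dimensional subspace---the reason the local-projection-plus-uniform-sampling scheme of \eqref{global_sketch_S} succeeds exactly where non-oblivious sampling cannot.
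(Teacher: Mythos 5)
Your proposal is correct and rests, at its core, on the same obstruction as the paper's own proof: a non-oblivious sketch is a function of global properties of $\Ab$, which no node can determine from its local block absent communication. The difference is one of rigor, and it is worth spelling out. The paper's proof is a three-sentence contradiction argument --- it simply asserts that ``some statistics about each $\Ab_i$ would need to be shared and compared in order to determine the global properties,'' contradicting the no-communication assumption --- and defers the leverage-score instance to the informal discussion following the proposition. You actually prove that assertion: the closed form $\ell_j=\Ab_{(j)}\big(\Ab^\top\Ab\big)^{-1}\Ab_{(j)}^\top$ with $\Ab^\top\Ab=\sum_{\iota=1}^k\Ab_\iota^\top\Ab_\iota$ makes the global dependence explicit, and your two-dataset construction ($\Ab_1=\Ib_2$ fixed, second block $\Ib_2$ versus $\mathrm{diag}(10,0)$, giving block-one leverage profiles $(\tfrac12,\tfrac12)$ versus $(\tfrac1{101},1)$ --- the arithmetic checks out) is a genuine indistinguishability argument: it shows the required sampling law on $\I_1$ is provably \emph{not} a function of $\Ab_1$, ruling out any data-dependent local rule, however clever. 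That is strictly stronger than what the paper writes down, and it is the right way to make ``non-oblivious'' precise.

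One caveat you should be aware of, since you raised the concern yourself. What your example establishes is that the hypothesis of Theorem~\ref{lvg_score_se_thm} (sampling proportional to \emph{global} leverage scores) cannot be realized locally; that hypothesis is sufficient but not necessary for \eqref{subsp_emb_id}, and with $n=d=2$ there is no room for genuinely compressive sampling ($d\leqslant r<n$), so the toy example does not by itself exhibit a dataset on which the aggregated sketch violates \eqref{subsp_emb_id}. This matches the scope of the paper's own informal claim, so it is not a gap relative to the paper; but if you wanted impossibility of the embedding itself, you would need to scale the construction up --- e.g.\ keep block one locally balanced with $n$ rows, and choose block two (after the local rule is fixed) so that a single row $j^{*}\in\I_1$ carries global leverage close to $1$; any local rule assigns some row probability at most $1/n$, so with $r<n$ that row is missed with probability at least $(1-1/n)^{r}$, in which case $(\Sbwh\Ub)^\top(\Sbwh\Ub)$ is singular and \eqref{subsp_emb_id} fails for every $\epsilon<1$.
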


Consider for instance the leverage score sketching matrix, which is not an OSE. In this case, the global properties that would need to be inferred is the orthonormal basis $\Ub$ of $\Ab$. For any fixed $\iota\in\N_k$, it is not possible to locally determine $\Ub_\iota$ \eqref{part_U} without any knowledge of $\Ab_{\{k\}}\backslash\{\Ab_\iota\}$, as the alignment of $\Ub_\iota$ with $\Ib_N$ depends on the entire data matrix $\Ab$. It is though possible to obtain such a subspace embedding by obtaining a (global) sampling scheme according to $\{\|\Ab_{(i)}\|_2^2\}_{i=1}^N$, though we would still require further rounds of communication. It is worthwhile investigating whether it is possible to devise a scheme which obtains an approximate global leverage score sampling sketch through local sampling, with few rounds of communication and restricted/limited information sharing.

\section{Hybrid Sketching}
\label{hybr_sketching_sec}

In \cite{BP23}, the authors proposed sequentially applying two distinct sketches, where the first is analogous to Fig.~\ref{fig_schematic}, to form what they call a ``\textit{hybrid sketch}''. An illustration of such a hybrid sketching procedure is portrayed in Fig.~\ref{fig_schematic_hybrid}. There are potential scenarios where such sketches are useful, \textit{e.g.} there may be communication bottlenecks between the computational nodes and the central coordinator who wants to collect a description of the global dataset, though the coordinator wants to also compress it locally in order to save on computations which will then be done on the final sketch. Communication-efficiency is a key concern with all distributed algorithms, including federated ones \cite{AN24}. As was also noted in \cite{BP23}, it might be computationally feasible for server nodes to sample as many data points as possible; \textit{e.g.} $R/k$, and then reduce the dimension of the globally sampled data to the final sketch dimension $\Rt$ using another sketch with better error properties.

By communicating larger local sketches (through $\Sbwh$, with a higher $R$), the coordinator obtains a better approximation (Theorem \ref{subsp_emb_thm_global}), and can then apply a global sketch $\Phib\in\R^{\rho R\times R}$ to get a more succinct representation of $\Ab$, for $\rho\in(0,1)$. The initial reduced dimension is $\mu N$ for $\mu\in(0,1)$, \textit{i.e.} the global sketch dimension through $\Sbwh$ is $R=\ceil{\mu N}{}$. We denote the final target dimension of the global sketch by $\Rt=\ceil{\rho R}{}$, \textit{i.e.} $\Rt=\ceil{(\rho\mu) N}{}$.

\begin{figure}[h]
  \centering
    \includegraphics[scale=.18]{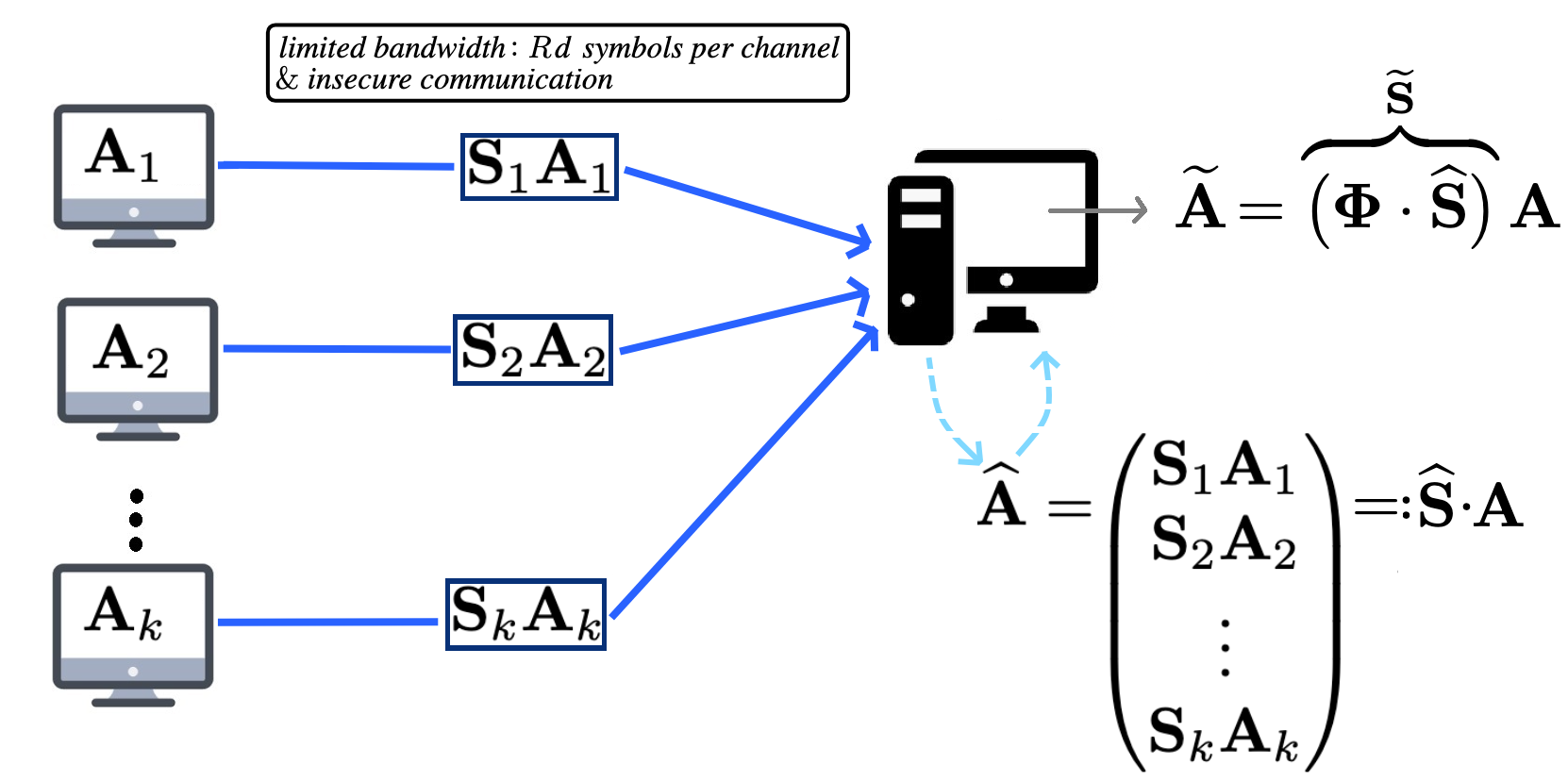}
    \caption{Schematic of hybrid sketching, with global sketching matrix $\Phib$.}
  \label{fig_schematic_hybrid}
\end{figure}

\begin{algorithm}[h]
\SetAlgoLined
  \KwOut{Sketch $\Abwt\in\R^{\Rt\times d}$, of the collective dataset $\Ab$}
  \underline{$k$ Distributed Servers}:
  \begin{enumerate}
    \item Locally sketch $\Ab_i\in\R^{n\times d}$ through $\Sb_i\in\R^{r\times n}$
    \item Communicate $\Abh_i\coloneqq\Sb_i\cdot\Ab_i\in\R^{r\times d}$ to the coordinator
  \end{enumerate}
  \underline{Coordinator}:
  \begin{enumerate}
    \item Aggregates $\Abwh = \Big[\Abh_1^\top \ \cdots \ \Abh_k^\top\Big]^\top$ as in Algorithm \ref{alg_distr_sketch}
    \item Performs sketching on $\Abwh$ through $\Phib\in\R^{\Rt\times R}$: $\Abwt\coloneqq\Phib\cdot\Abwh$
  \end{enumerate}
\caption{Distributed Hybrid Sketching}
\label{alg_hybrid_sketch}
\end{algorithm}

Below, we illustrate the embeddings which take place in hybrid sketching.
\[
\begin{tikzcd}
& {\image(\Ab)\ \subseteq\ \R^N} \arrow[rr, hook, "{\Sbwh}"] \arrow[rrrr, hook, bend left=40, dashed, "{\Sbwt = \Phib\circ\Sbwh}"'] & & \image(\Abwh) \subseteq\ \mathbb{R}^R \arrow[rr, hook, "{\Phib}"] & & {\mathbb{R}^{\Rt} \ \supseteq \ \image(\Abwt)}
\end{tikzcd}
\]

A benefit of applying the second dense sketching matrix $\Phib$ (unlike the sparse SJLT in \cite{BP23}), is that the resulting rows of the final sketch will contain linear combinations of elements from the global dataset $\Ab$, rather than just the local datasets $\Ab_{\{k\}}$. That is, we get a recombination of all the data vectors in the global dataset. This idea was motivated by the work of \cite{guha2000clustering}, which considers $k$-medians clustering. Specifically, the authors proposed an algorithm which achieves a constant factor approximation for the aforementioned problem, in a single pass and using small space. The algorithm essentially partitions the data points $X$ into $l$ groups $X_{\{l\}}$, and determines $k$ centers for each partition. Then, by only considering the $lk$ centers with reweighting according to the data points assigned to each center; say $X'$, it performs $k$-medians clustering on the smaller set $X'$ to obtain an approximate solution on $X$.

In our setting, by applying a global sketch $\Phib$ with better global properties than $\Sbwh$, the coordinator will thus have a better sketch in hand, as it is now projecting the global dataset $\Ab$ into a random lower-dimensional subspace formed by all the local datasets. This is also more beneficial, computationally and in terms of communication and privacy, to simply delivering $\Ab_{\{k\}}$ to the coordinator who will apply a global sketch to the aggregated $\Ab$. The composite sketching matrix after applying
\begin{equation}
\label{Phi_part}
  \Phib = \Big[\Phib^1 \ \cdots \ \Phib^k\Big]\in\R^{\Rt\times R} \quad \text{ where } \quad \Phib^i\in\R^{\Rt\times r}
\end{equation}
is then
\begin{equation}
\label{Phi_S_hybrid}
  \Sbwt \coloneqq \Phib\Sbwh = \Big[\overbrace{\Phib^1\Sb_1}^{\Sbwt^1} \ \cdots \ \overbrace{\Phib^k\Sb_k}^{\Sbwt^k}\Big]\in\R^{\Rt\times N}
\end{equation}
where $\Sbwt^i\in\R^{\Rt\times n}$, which $\Sbwt$ is dense. Consequently, each row of the resulting sketched matrix $\Abwh\coloneqq\Sbwt\Ab\in\R^{\Rt\times N}$ will be a linear combination of all $N$ elements of $\Ab$, unlike the distributed sketch $\Abwh=\Sbwh\Ab$ of Algorithm \ref{alg_distr_sketch}. Intuitively, this should therefore lead to a better \textit{global} approximation, while also satisfying the desired property that $\E[\Sbwt^\top\Sbwt]=\Ib_N$ in general, and $\E[\Sbwt]=\bold{0}_{\Rt\times N}$ when the local and global sketches are independent and each satisfy the respective property.

The work of \cite{BP23} only considered the case where uniform sampling is performed locally, and then a SJLT is applied by the coordinator. In terms of local computation uniformly sampling is the cheapest sketch one can consider, though it does not capture the spectral information of the local (nor global) datasets. This is why they then apply a superior sketch, the SJLT. In this section, we discuss how we can get better hybrid sketching results.

\subsection{Embedding of Hybrid Sketching}
\label{hybr_embeddings}

Empirically, we observed that our approach is beneficial when $\Phib$ is a Rademacher sketch, and that in the case when $\Phib$ is an SRHT, we do not outperform mainstream sketching approaches. Furthermore, if the local $\Sb_{\{k\}}$ through $\Sbwh$ are SRHTs, the performance is very close to when $\Sb_{\{k\}}$ are Rademacher, and we have the benefit of accelerating the initial sketching procedure which is also done distributively. In the case when $N$ is relatively small, we also observed that selecting the local and the global sketching matrices to be a random Rademacher projection, leads to superior results. In conclusion, the most beneficial combination for large $N$ is when $\Sb_{\{k\}}$ are SRHTs, and $\Phib$ is a Rademacher random matrix, which we also justify theoretically. All in all, the ideal choice for sketching matrices in our setting of Fig.~\ref{fig_schematic_hybrid} and Algorithm \ref{alg_hybrid_sketch}, is to first apply SRHTs locally through $\Sb_{\{k\}}$, and then a Rademacher sketch on $\Abwh$ through $\Phib$.

As a first step, we prove Proposition \ref{hybrid_Prop}, which states that by sequentially applying two distinct sketching matrices, with modified error probability $\deltatil$ and distortion $\epst$ parameters, the resulting sketch satisfies \eqref{subsp_emb_id}. For our purpose, it seems that we cannot avoid considering the composition of two sketches separately in order to prove that $\Sbwt=\Phib\Sbwh$ is a valid $\ell_2$-s.e., as the application of $\Phib$ removes the assumption that the entries of $\Sbwt$ are i.i.d., which is usually needed for showing such results. Specifically, since there is a correlation between the entries of $\Sbwt$, this prohibits  us from giving a concentration guarantee on hybrid sketching based on prevalent matrix concentration bound techniques.

\begin{Prop}
\label{hybrid_Prop}
Fix the parameters $\epsilon,\delta\in(0,1)$ and $\Ub\in\R^{N\times d}$. Assume we are given sketching matrices $\Pib_1\in\R^{R\times N}$ and $\Pib_2\in\R^{\Rt\times R}$, both satisfying \eqref{subsp_emb_id} with parameters $\deltatil\coloneqq\delta/2$ and $\epst\coloneqq\sqrt{1+\epsilon}-1$ for $\image(\Ub)$ and $\image(\Pib_1\Ub)$ respectively. Then, the hybrid sketching matrix $\Sbh\coloneqq\Pib_2\Pib_1$ satisfies \eqref{subsp_emb_id} for $\image(\Ub)$ with parameters $(\epsilon,\delta)$.
\end{Prop}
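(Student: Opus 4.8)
The plan is to avoid any direct concentration argument on the (correlated) entries of $\Sbh=\Pib_2\Pib_1$ and instead to compose the two embedding guarantees multiplicatively. The key observation is that, for any distortion $\epsilon'$, the matrix identity \eqref{subsp_emb_id} with parameter $\epsilon'$ is equivalent to the squared-norm bound $(1-\epsilon')\|\yb\|^2\le\|\Sb\yb\|^2\le(1+\epsilon')\|\yb\|^2$ for every $\yb$ in the embedded subspace. This follows from $\|\Ib_d-(\Sb\Ub)^\top(\Sb\Ub)\|_2=\max_{\|\xb\|=1}\big|\xb^\top\big(\Ib_d-(\Sb\Ub)^\top(\Sb\Ub)\big)\xb\big|$ together with $\|\Ub\xb\|=\|\xb\|$; crucially, the resulting squared-norm bound is basis-free, so I may apply it to the subspace $\image(\Pib_1\Ub)$ without committing to a particular orthonormal basis of it.

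First I would restate the hypotheses in this form. Let $E_1$ be the event (of probability at least $1-\deltatil$) that $(1-\epst)\|\yb\|^2\le\|\Pib_1\yb\|^2\le(1+\epst)\|\yb\|^2$ for all $\yb\in\image(\Ub)$. Since $\epst=\sqrt{1+\epsilon}-1\in(0,1)$ for $\epsilon\in(0,1)$, on $E_1$ the lower bound forces $\Pib_1$ to be injective on $\image(\Ub)$, so $\Pib_1\Ub$ has full column rank and $V\coloneqq\image(\Pib_1\Ub)$ is a genuine $d$-dimensional subspace of $\R^R$. Conditioning on $\Pib_1$ fixes $V$, and the hypothesis on $\Pib_2$ gives an event $E_2$ of conditional probability at least $1-\deltatil$ on which $(1-\epst)\|\zb\|^2\le\|\Pib_2\zb\|^2\le(1+\epst)\|\zb\|^2$ for all $\zb\in V$. (When $\Pib_2$ is an OSE, as in our application, this is automatic, since by Definition \ref{OSE_def} it embeds \emph{every} $d$-dimensional subspace with the stated probability, so the guarantee transfers to the random subspace $V$.)

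Next I would chain the estimates on $E_1\cap E_2$. Fix $\yb\in\image(\Ub)$ and put $\zb=\Pib_1\yb\in V$; then $\|\Sbh\yb\|^2=\|\Pib_2\zb\|^2$, and applying the $\Pib_2$-bound followed by the $\Pib_1$-bound on $\|\zb\|^2=\|\Pib_1\yb\|^2$ yields
\[
(1-\epst)^2\|\yb\|^2\ \le\ \|\Sbh\yb\|^2\ \le\ (1+\epst)^2\|\yb\|^2.
\]
Writing $s\coloneqq1+\epst=\sqrt{1+\epsilon}$, the upper factor is $s^2=1+\epsilon$ exactly, while the lower factor satisfies $(1-\epst)^2-(1-\epsilon)=(2-s)^2-(2-s^2)=2(s-1)^2\ge0$, so $(1-\epst)^2\ge1-\epsilon$. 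Hence $(1-\epsilon)\|\yb\|^2\le\|\Sbh\yb\|^2\le(1+\epsilon)\|\yb\|^2$ for all $\yb\in\image(\Ub)$, which is exactly \eqref{subsp_emb_id} for $\Sbh$ with distortion $\epsilon$. This calibration of $\epst$ is precisely what converts the two compounded $(1\pm\epst)$ factors into clean $(1\pm\epsilon)$ bounds.

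Finally, by conditioning on $\Pib_1$ and a union bound over the two failure events, the total failure probability is at most $2\deltatil=\delta$, so \eqref{subsp_emb_id} holds for $\Sbh$ on $\image(\Ub)$ with parameters $(\epsilon,\delta)$. I expect the only genuinely delicate step to be the probabilistic bookkeeping of the middle paragraph: because $V=\image(\Pib_1\Ub)$ depends on the random $\Pib_1$, one cannot simply multiply two unconditional probabilities, but must condition on $\Pib_1$ and rely on the fact that the guarantee for $\Pib_2$ is supplied for this data-dependent subspace (automatically so when $\Pib_2$ is oblivious). Everything else reduces to the elementary two-step multiplicative composition.
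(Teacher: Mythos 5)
Your proposal is correct and takes essentially the same route as the paper's proof: a multiplicative composition of the two $(1\pm\epst)$ guarantees using the calibration $(1+\epst)^2=1+\epsilon$ and $(1-\epst)^2\geqslant 1-\epsilon$, followed by conditioning on $\Pib_1$ so that the failure probabilities of the two sketches add up to $2\deltatil=\delta$. The only cosmetic differences are that you work with squared norms, which matches \eqref{subsp_emb_id} exactly and is slightly cleaner than the paper's unsquared formulation, and that the paper phrases the probabilistic step as an explicit law-of-total-probability sum over realizations of $\Pib_1$ rather than your conditioning-plus-union-bound (the two bookkeeping arguments are equivalent).
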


\begin{Rmk}
Note that depending on the choices of $\Pib_1$ and $\Pib_2$, one can further optimize the parameters $(\epst,\deltatil)$ to get superior results on either the sample or time complexity of the resulting sketch, though there will always be a trade-off between two. Furthermore, one can also extend the above result to consider the composition of more than just two sketching matrices.
\end{Rmk}

\begin{Thm}
\label{hybrid_thm}
Consider the hybrid sketching approach according to Algorithm \ref{alg_hybrid_sketch}, where we first perform local sketching distributively through $\Sb_{\{k\}}$ according to \eqref{global_sketch}, and then apply a global sketch $\Phib$ on the aggregated sketches according to \eqref{Phi_S_hybrid}. As long as $\Phib$ and $\Sbwh$ both satisfy the $\ell_2$-s.e. property with parameters
$\deltatil=\delta/2$ and $\epst=\sqrt{1+\epsilon}-1$, then the resulting hybrid sketch satisfies the $\ell_2$-s.e. sketching matrix of the aggregated $\Ab$, with error probability $\delta$ and distortion $\epsilon$.
\end{Thm}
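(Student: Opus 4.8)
The plan is to read Theorem \ref{hybrid_thm} as a direct instantiation of Proposition \ref{hybrid_Prop}, with the two generic sketches set to $\Pib_1\gets\Sbwh$ and $\Pib_2\gets\Phib$, so that the hybrid map $\Sbh=\Pib_2\Pib_1$ of the proposition coincides with the composite $\Sbwt=\Phib\Sbwh$ of \eqref{Phi_S_hybrid}. First I would record the subspace bookkeeping: since $\Ab=\Ub\Sigb\Vb^\top$ with $\Sigb,\Vb$ full rank, the column spaces agree, $\image(\Sbwh\Ub)=\image(\Sbwh\Ab)=\image(\Abwh)$, and because $\Sbwh$ is an $\ell_2$-s.e.\ of $\image(\Ub)$ it preserves rank, so $\Abwh$ remains full rank $d$ and $\image(\Abwh)$ is a genuine $d$-dimensional subspace of $\R^R$ with its own orthonormal basis. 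This is precisely the subspace $\image(\Pib_1\Ub)$ on which Proposition \ref{hybrid_Prop} requires the second factor to act.

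Next I would check the two hypotheses of Proposition \ref{hybrid_Prop} under the assumptions of the theorem. The first, that $\Sbwh$ is an $\ell_2$-s.e.\ of $\image(\Ub)$ with parameters $(\epst,\deltatil)$, is assumed directly, and in practice is supplied by Theorem \ref{subsp_emb_thm_global} at embedding dimension $R=\Theta\big(d\log(2d/\deltatil)/(\betah\epst^2)\big)$. The second, that $\Phib$ is an $\ell_2$-s.e.\ of $\image(\Abwh)$ with parameters $(\epst,\deltatil)$, is where the only real care lies: since $\Abwh$ is itself random, I would condition on the realization of $\Sbwh$, restricted to the high-probability event that it is a good embedding. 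Conditioned on $\Sbwh$, the subspace $\image(\Abwh)$ is a \emph{fixed} $d$-dimensional subspace of $\R^R$; as $\Phib$ is drawn independently of $\Sbwh$ and is oblivious (an OSE in the sense of Definition \ref{OSE_def}), it embeds any such fixed subspace with the stated parameters. Alternatively, if $\Phib$ is a non-oblivious $\ell_2$-s.e.\ formed by the coordinator from the realized $\Abwh$ (e.g.\ leverage-score sampling of $\Abwh$), the embedding of the specific $\image(\Abwh)$ is immediate; this is exactly the sense in which $\Phib$ is permitted to be non-OSE.

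With both hypotheses verified I would invoke Proposition \ref{hybrid_Prop} verbatim, concluding that $\Sbwt=\Phib\Sbwh$ satisfies \eqref{subsp_emb_id} for $\image(\Ub)$ with distortion $\epsilon$ and failure probability $\delta$. Unpacking the proposition, the two distortions chain multiplicatively on any $\yb\in\image(\Ub)$, giving $(1-\epst)^2\|\yb\|\leqslant\|\Sbwt\yb\|\leqslant(1+\epst)^2\|\yb\|$; the choice $\epst=\sqrt{1+\epsilon}-1$ makes $(1+\epst)^2=1+\epsilon$ and, since $2\epst\leqslant\epsilon$, also $(1-\epst)^2\geqslant 1-2\epst\geqslant 1-\epsilon$, so the composite distortion is $\epsilon$. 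A union bound over the two failure events, each of probability $\deltatil=\delta/2$, yields total failure at most $\delta$.

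The main obstacle is conceptual rather than computational: it is the fact that the subspace $\image(\Abwh)$ presented to $\Phib$ is random and correlated with the first-stage randomness of $\Sbwh$. This is what forces the conditioning argument together with the obliviousness (or central, data-aware) requirement on $\Phib$, and it is the reason the two sketches must be analysed as a genuine composition rather than as a single matrix $\Sbwt$ with i.i.d.\ entries: as noted before the statement, the product $\Phib\Sbwh$ has correlated entries, so no direct matrix-concentration bound applies to $\Sbwt$ as a whole, and the sequential argument of Proposition \ref{hybrid_Prop} is essential.
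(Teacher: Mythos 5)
Your proposal is correct and follows essentially the same route as the paper: the paper's proof is precisely the two-step invocation of Theorem~\ref{subsp_emb_thm_global} (resp.\ Corollary~\ref{subsp_emb_cor}) to obtain the $(\epst,\deltatil)$ $\ell_2$-s.e.\ sketch $\Abwh=\Sbwh\Ab$, followed by Proposition~\ref{hybrid_Prop} instantiated with $\Pib_1\gets\Sbwh$ and $\Pib_2\gets\Phib$. Your extra care about the randomness of $\image(\Abwh)$ --- conditioning on the realization of $\Sbwh$ and using the independence/obliviousness of $\Phib$ --- is a detail the paper leaves implicit (it is effectively absorbed into the law-of-total-probability argument inside the proof of Proposition~\ref{hybrid_Prop}), so your write-up is simply a more explicit rendering of the same argument.
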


Experimentally (Sec.~\ref{exper_sec}), we observed that the best performance in terms of the empirical $\ell_2$-s.e. error $\epsilon$ according to \eqref{subsp_emb_id} occurs when the outer sketch is a random Rademacher matrix. Theoretically this choice is also sound, as it corroborates the facts conveyed in Table \ref{table_OSE}. Specifically, Rademacher  random projections require a smaller number of rows in comparison to the SRHT to guaranteed the same distortion. It is beneficial though in terms of complexity for the distributed servers to apply locally apply SRHTs. In summary, the outer sketching matrix $\Phib$ has a greater affect on the overall error $\epsilon$. By reducing the complexity required for the application of $\Phib$, we further accelerate the sketching of the global dataset $\Ab$, which was the main motivation for hybrid sketching \cite{BP23}. In practice, this also resulted in better empirical distortion error compared to Gaussian sketches.

There are also practical implications to applying two distinct binary random matrices, as by what we propose, the entries of the hybrid sketching matrix $\Sbwt$ follow a shifted binomial distribution\footnote{In the case where $\Sb_{\{k\}}$ and $\Phib$ are Rademacher matrices. For $\Sb_{\{k\}}$ SRHTs, this is also true empirically.} centered at 0. This is a consequence of the de Moivre–Laplace theorem, which can implies that $\Sbwt$ can be viewed as a discrete approximation
to a Gaussian random matrix, which are not easily realizable in practice. This gives further intuition and justification to our choices of $\Sb_{\{k\}}$ and $\Phib$.

\subsection{Local SRHTs with a Global Rademacher}
\label{subsec_SRHT_Rrad}

Next, we focus on the ideal combination where $\Sb_{\{k\}}$ are SRHTs, and $\Phib$ is a Rademacher sketching matrix. For this analysis we make use of the sparse-SRHT \cite{TZCC22}, a special case of our distributed sketching approach through $\Sbwh$. The reason we do so, is to avoid the dependence on the oversampling parameter $1/\beta$ in the sample complexity $R$ of Theorem \ref{subsp_emb_thm_global}, which considers a variety of random projection matrices and not only the Randomized Hadamard Transform. It is also clear that unless we have a very low misestimation parameter $\beta$, then our $R$ is lower than that of \cite{TZCC22}. Lastly, we note that in many algorithms; including the SRHT, to remove the dependence on $\beta$ an assumption on its value is made, \textit{e.g.} $\beta=(2\ln(40Nd))^{-1}$ \cite{DMMS11,DM17}. Parameter $\beta$ is chosen such that an exact uniform sampling distribution results from the approximation of the flattened leverage scores, though it is significantly far from 1 for large $N$, which implies that the oversampling which takes place is relatively large.

Note that in the description of `Approach I' in \cite{TZCC22} for practical reasons an additional permutation matrix is applied to $\Ab$, and it is assumed that the same indexed samples are selected from each locally projected dataset, \textit{i.e.} $\Omb_i$ is the same for all $i\in\N_k$. As was aforementioned, the global permutation does not affect the theoretical result, though it could have practical benefits. To alleviate the caveat that we allow each server to perform a distinct $\Omb_i$, we can assume that a corresponding permutation is performed locally at each $\Ab_i$.

\begin{Thm}[\cite{TZCC22} Thm. 4]
\label{spSRHT_thm}
For any $\epsilon,\delta\in(0,1)$, $k$ the number of blocks, and $\Sbwh\in\R^{R\times N}$ the sparse-SRHT, the $\ell_2$-s.e. property of \eqref{subsp_emb_id} is satisfied with probability at least $1-\delta$ when
\begin{equation}
\label{red_dim_spSRHT}
  R \geqslant \frac{3k}{\epsilon^2}\left(\sqrt{d}+\sqrt{8\ln(3n/(k\delta))}\right)^2\ln(3d/\delta)\ .
\end{equation}

\end{Thm}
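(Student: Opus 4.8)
The plan is to follow the classical two–stage analysis of the subsampled randomized Hadamard transform, adapted to the block–diagonal structure of $\Sbwh$ in \eqref{global_sketch_S}. The goal is to bound $\|\Ib_d-(\Sbwh\Ub)^\top(\Sbwh\Ub)\|_2$ as required in \eqref{subsp_emb_id}. First I would separate the randomness of the sign/Hadamard flattening from the randomness of the uniform subsampling, condition on a good flattening event, and then apply a matrix concentration inequality to the sampling stage; a final union bound over the two stages (and over the $k$ blocks) splits the failure probability $\delta$ into the three pieces that produce the $3$'s inside the logarithms of \eqref{red_dim_spSRHT}.

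For the first stage, I would use the fact that each local projection is a randomized Hadamard transform $\Pb_i=\Hbbar_n\Db_i$, which is unitary, so that $\Ubh_i=\sqrt{n/r}\,\Pb_i\Ub_i$ has the same Gram matrix as $\Ub_i$, whence $\E[(\Sbwh\Ub)^\top(\Sbwh\Ub)]=\sum_{i}\Ub_i^\top\Ub_i=\Ub^\top\Ub=\Ib_d$, using $n/r=N/R$. The role of the Hadamard/sign step is to control the transformed coherence: writing each entry of $\Pb_i\Ub_i$ as a Rademacher–weighted sum and applying a Hoeffding bound, followed by a union bound over the $n$ rows of each block, yields with high probability a uniform per–row bound of the form $\ellh_j\lesssim \Ll_i/n+\sqrt{(\Ll_i/n)\cdot \ln(3n/(k\delta))}$. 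This is precisely the quantitative version of Lemma~\ref{exp_local_scores} and Proposition~\ref{norm_lvg_prop}, and it is where the factor $\sqrt{8\ln(3n/(k\delta))}$ originates, with $n=N/k$ appearing because the union bound is taken within a single block of $n$ rows.

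For the second stage, I would condition on this flattening event and write $(\Sbwh\Ub)^\top(\Sbwh\Ub)=\frac{N}{R}\sum_{i=1}^{k}\sum_{j\in\Scal_i}\Ubh_{(j)}^\top\Ubh_{(j)}$ as a sum of $R=kr$ independent, rescaled rank–one positive semidefinite matrices, one per sampled row. The conditioned coherence bound controls the largest eigenvalue of each summand by roughly $\tfrac{N}{R}\max_i\ellh_j$, while the expected sum equals $\Ib_d$; feeding these two parameters into a matrix Chernoff/Bernstein bound gives the eigenvalue deviation estimate. Demanding that the deviation be at most $\epsilon$ with probability $1-\delta$ produces a sample complexity of the form $R\gtrsim \tfrac{k}{\epsilon^2}\big(\sqrt d+\sqrt{8\ln(3n/(k\delta))}\big)^2\ln(3d/\delta)$: the $\ln(3d/\delta)$ is the dimensional factor in the matrix Chernoff tail, the squared term collects the subspace dimension $d$ together with the coherence bound, and the overall factor $k$ reflects that only $r=R/k$ rows are drawn from each of the $k$ blocks.

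The main obstacle I anticipate is the block structure of the sampling. Because exactly $r$ rows are drawn (independently) from each of the $k$ locally transformed blocks rather than globally and uniformly from all $N$ rows, the summands are independent but are not identically distributed across blocks, and the per–block row–norm profiles are heterogeneous through $\Ll_i$. Handling this requires a coherence bound that is uniform over all blocks (so the worst block governs $R$) and a careful verification that the rescaling $\sqrt{N/R}$ makes the block–wise expectations sum exactly to $\Ib_d$; alternatively one may invoke Proposition~\ref{prop_balls_bins} to relate the block–wise sampling to a genuinely global uniform sample with replacement. Keeping the logarithm in terms of $n$ rather than $N$, and tracking the extra factor of $k$ cleanly through both stages, is the delicate bookkeeping that distinguishes this bound from the classical single–block SRHT estimate.
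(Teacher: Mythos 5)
There is nothing to compare against here: the paper does not prove this statement. Theorem~\ref{spSRHT_thm} is imported verbatim from \cite{TZCC22} (their Theorem~4 on the sparse-SRHT), and it is quoted precisely so that the comparisons in Sec.~\ref{subsec_SRHT_Rrad} can avoid the oversampling factor $1/\betah$ appearing in the paper's own embedding result, Theorem~\ref{subsp_emb_thm_global}. So your attempt can only be judged on its own merits, not against an in-paper argument.

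On those merits, your outline is the standard two-stage SRHT analysis (in the style of Tropp and of Drineas--Mahoney--Muthukrishnan--Sarl\'os) adapted to the block-diagonal structure of \eqref{global_sketch_S}: condition on a flattening event obtained from per-row Rademacher/Hoeffding concentration plus a union bound within each block of $n$ rows, then apply a matrix Chernoff bound to the sum of $R=kr$ independent rank-one terms, and split $\delta$ across the stages. This is structurally consistent with every piece of \eqref{red_dim_spSRHT}: the term $\bigl(\sqrt{d}+\sqrt{8\ln(3n/(k\delta))}\bigr)^2$ is the conditioned per-row coherence bound, $\ln(3d/\delta)$ is the dimensional factor from the matrix Chernoff tail, and the prefactor $k$ reflects that only $r=R/k$ rows are drawn per block so the worst block governs the sample size. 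Two points need care before this becomes a proof. First, your claim that $\Ubh_i=\sqrt{n/r}\,\Pb_i\Ub_i$ ``has the same Gram matrix as $\Ub_i$'' is off by the factor $n/r$; the identity $\E[(\Sbwh\Ub)^\top(\Sbwh\Ub)]=\Ib_d$ holds only after combining unitarity of $\Pb_i$ with $\E[\Omb_i^\top\Omb_i]=\tfrac{r}{n}\Ib_n$, i.e., the rescaling is compensated by the sampling in expectation, not by the rotation alone. Second, the precise argument of the logarithm, $3n/(k\delta)$ rather than, say, $3nk/\delta$, encodes exactly how \cite{TZCC22} allocate failure probability across the $n$ rows of each block and the $k$ blocks; your sketch leaves that bookkeeping implicit, and it is the delicate step (as you yourself anticipate), so the stated constants cannot be claimed from your outline as written.
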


For our comparisons, we will consider the overall complexity of performing the resulting sketch $\Sbwt=\Phib\Sbwh$ on $\Ab$. The other measure we use to evaluate the sketching approaches is by the number of rows $R$ that are required to attain the same embedding guarantee in terms of \eqref{subsp_emb_def} and \eqref{subsp_emb_id}. To invoke Proposition \ref{hybrid_Prop}, for the remainder of this section we will assume that both sketching matrices $\Sbwh$ and $\Phib$ have design parameters $(\epst=\sqrt{1+\epsilon}-1,\deltatil=\delta/2)$, when compared to standard sketching techniques with parameters $(\epsilon,\delta)$.\\

\noindent \textbf{Comparison to the SRHT:} Recall that the main benefit of the SRHT, is that due to the structure of the random projection, it can be performed in $\ow(Nd\log N)$ oprations through recursive Fourier methods. Its main drawback, is the fact that it requires a larger embedding dimension $R$ to attain the same subspace embedding guarantee compared to other OSEs. Compared to most other randomized projections, $R$ in the SRHT has an additional logarithmic dependence on $N$, which is a consequence of applying the union bound in order to show that the leverage scores of the transformed basis are flattened.

The overall complexity of our application boils down to two matrix-matrix multiplications, by $\Sbwh$ and $\Phib$, which are of complexity $\ow(nd\log n)$ where $n=N/k$ for $k$ not too large, and $\ow(\Rt Rd)=\ow(N^2d\rho\mu^2)$ respectively. This complexity is then $\ow\big(Nd(\log(N/k)/k+N\rho\mu^2)\big)$ which is asymptotically less than that of the SRHT when
\begin{align*}
  N\rho\mu^2 < \log(N)-\frac{\log(N/k)}{k} = \log(N)\left(\frac{k-1}{k}\right)+\frac{\log(k)}{k} \approx \log(N)
\end{align*}
hence $\rho\mu^2<\frac{\log(N)}{N}$. By selecting the lowest possible $\mu$ in terms of \eqref{red_dim_spSRHT}, neglecting constant terms and setting $\delta=6/k$, we have an overall improvement when $\epsilon$ and $k$ satisfy
$$ \epsilon^6 > \frac{d\log(kd)\big(d+\log(k)\big)}{(N/k)^2}\ . $$
Even though this seems stringent and not satisfactory as this implies a relatively big $\epsilon$, it is possible to attain a small $\epsilon$ for large $N$. This is also where the main benefit of the SRHT arises, in the complexity of applying the projection. As we will see next though, our proposed hybrid sketch is superior in terms of the sample complexity required for an $\ell_2$-s.e., for reasonable parameters.

To get an overall improvement on the sample complexity, we need $R=kr$ for parameters $(\epst,\delta/2)$ to meet the lower bound of \eqref{red_dim_spSRHT}, \textit{i.e.}
\begin{align*}
  r \geqslant \frac{3}{\epst^2}\left(\sqrt{d}+\sqrt{8\ln(3n/(k\deltatil))}\right)^2\ln(3d/\deltatil) \geqslant \frac{6}{\epsilon^2}\left(\sqrt{d}+\sqrt{8\ln(6n/(k\delta))}\right)^2\ln(6d/\delta)
\end{align*}
and then require $\Rt$ to be small enough so that it is below the sample complexity required by the SRHT $Q= \Theta\big(d\log(Nd/\delta)\log(d/\delta)/\epsilon^2\big)$. Note that the sample complexity of the Rademacher sketch is independent of $N$, which is where our benefit comes from, compared to Hadamard based approaches. Specifically, as long as we enforce
\begin{equation*}
\label{Rt_bd}
 \Rt = \ow\big((d+\log(2/\delta))/\epst^2\big)\notag< \Theta\big(d\log(Nd/\delta)\log(d/\delta)/\epsilon^2\big)\ .
\end{equation*}
To simplify our expressions, assume $\delta$ is fixed; \textit{e.g.} $\delta=0.1$, and note that $\epst/\epsilon<1/2$ for any $\epsilon>0$; when $\epst=\sqrt{1+\epsilon}-1$. Then, the requirement on $\Rt$ reduces to $\Omega(d)<\Theta\big(d\log(Nd)\log(d)\big)$, which holds true for large $N$. We note a minor caveat here though, which needs to be ensured in order to not have vacuous statements. For the SRHT based sketches, one needs to select parameters so that the reduced dimensions $R$ and $Q$ are respectively less than $N$. This is dependent on both $N$ and $d$, and we therefore do not further elaborate on this point.\\

\noindent \textbf{Comparison to the Gaussian Sketch:} The main benefits of the Gaussian sketch is its sample complexity, which matches that of the Rademacher sketch, and the fact that it is an OSE. From this end, it will always be slightly superior to our proposed approach, as our sketch $\Phib$ will require more samples, as $\epst<\epsilon$ and $\deltatil<\delta$. Asymptotically though, if we consider fixed $\epsilon$ and $\delta$, both approaches have the same sample complexity.

In terms of overall time complexity, not considering the fact that our computations are also carried out distributively, of our approach's number of operations is $\ow\big(Nd\big(\log(N/k)+N\mu^2\rho)\big)\big)$, while that of the Gaussian sketch is $\ow(N\Rt d)=\ow(N^2d\rho\mu)$. By dropping the $\ow$-notation, our approach is then superior when $\rho\mu(1-\mu)>\frac{\log(N/k)}{N}$ which holds true in general, unless one selects very small $\rho$ and $\mu$. In the case where $\rho$ and $\mu$ are very small though, it is likely that the required number of samples by the respective sketch (according to Table \ref{table_OSE}) may not be satisfied. Furthermore, even for relative small compression factors of, \textit{e.g.} $\rho=\mu=1/2$, our method is superior when $k>8$.\\

\noindent \textbf{Space requirement:} It is worth noting that our distributed approach is superior to any standard sketching approach in terms of space requirement. The largest matrix which is stored in one location at any given time throughout our procedure, is the initial global sketch $\Abwh\in\R^{R\times d}$ \eqref{global_sketch}, which is evidently smaller than $\Ab$. An artifact of this, is that our proposed approach is also beneficial in terms communication when we want to compute a sketch of distributed data, in the case when we would need to first aggregate the data in one server before applying the sketch.\\

\noindent \textbf{Network constraints:} Lastly, we outline the parameters selection when we have network imposed constraints, such as the number of servers, communication and storage budgets. We can assume that $k$ is fixed and all servers are homogeneous, with storage capacity of at least $n$ data points of length $d$. If we now assume a communication budget of $C_1$ symbols from each server to the coordinator, in order to maintain as much information as possible from the local sketches and utilize the allowable network communication, we set $\mu=C_1/(nd)$. If the coordinator has a storage budget of $C_2$ symbols, we have to set $\mu=\min\left\{C_1/(nd),C_2/(knd)\right\}$. Upon receiving the local sketches, the coordinator has a sketch $\Abwh\in\R^{R\times d}$ according to \eqref{global_sketch} where $R=\floor{\mu N}{}$. Then, if the final sketch $\Abwt$ has a further constraint of not exceeding $C_3$ symbols, we will require that the second sketching matrix $\Phib$ is of size $\Rt\times R$, where $\rho=C_3/(Rd)$ and $\Rt=\floor{\rho R}$. It is important to note that the choice of sketching method and associated parameters directly influences the accuracy guarantees $(\epsilon,\delta)$ of $\Abwh$. For this reason, we seek to retain as many data points as possible at each step.

\section{Distributed Gradient Descent and Security}
\label{sec_distr_GD_sec}
\subsection{Implications to Distributed Gradient Descent}

Our approach to locally sketching data matrices also has implications to distributed GD, which is arguably the most widely used technique in contemporary machine learning. Since its introduction \cite{RM51}, there has been an extensive body of work on stochastic GD (SGD), more recently in the distributed setting. Similar to our framework, random sampling and sketching algorithms have been utilized to show attainability of the same guarantees as SGD \cite{BWE20,CMPH23,CPH23b} by performing GD in parallel with local computations, where the servers compute gradients of subsets of the data.

An implication of {\cite[Thm. 3]{CPH23b}}, is that as long as the $\ell_2$-s.e. property is satisfied by the \textit{iterative sketching}\footnote{By \textit{iterative sketching}, we mean that at each iteration of the algorithm, we apply a new sketching matrix.} (or sampling) technique which takes place, we converge in expectation to the optimal solution \eqref{x_star_pr_lr} at a rate inversely proportional to the number of iterations. To establish that our sketching technique (Fig.~\ref{fig_schematic}) can be utilized to obtain the same convergence guarantee, it suffices to show that if every server performs a different sampling $\Omb_i^{[t]}$ at each iteration $t$, \textit{i.e.} a new global underlying sketching matrix $\Sbwh^{[t]}$ is applied at each iteration, the overall gradient is an unbiased estimate of the gradient $\nabla_\xb L_{ls}(\Ab,\bb;\xb^{[t]})$, which we show this in Proposition \ref{unb_grad}.

\begin{Prop}
\label{unb_grad}
By applying a different local sampling matrix $\big\{\Omb^{[t]}_i\big\}_{i=1}^k$ at each iteration $t$, we obtain an unbiased gradient estimate of the least squares objective function \eqref{x_star_pr_lr}.
\end{Prop}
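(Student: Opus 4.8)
The plan is to show that taking the expectation of the sketched gradient over the random sampling matrices reproduces (up to the correct scaling) the full-data gradient $\nabla_\xb L_{ls}(\Ab,\bb;\xb^{[t]})$. The key structural fact I would exploit is the normalization built into the local sketches: each $\Sb_i=\sqrt{n/r}\cdot(\Omb_i\Pb_i)$ is designed so that $\E[\Omb_i^\top\Omb_i]=(r/n)\Ib_n$ after rescaling, and more generally $\E[\Sb_i^\top\Sb_i]=\E[\Pb_i^\top\Pb_i]=\Ib_n$ when combined with the unitary (or $\E[\Pb_i^\top\Pb_i]=\Ib_n$) property of the projections. Assembling the $k$ blocks, the global sketch satisfies $\E[(\Sbwh^{[t]})^\top\Sbwh^{[t]}]=\Ib_N$, which is exactly the normalization condition flagged in the Problem Setup as the hallmark of an unbiased sketch.

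First I would write down the sketched least-squares objective at iteration $t$, namely $L_{\Sbwh^{[t]}}(\Ab,\bb;\xb^{[t]})=\|\Sbwh^{[t]}(\Ab\xb^{[t]}-\bb)\|_2^2$, and compute its gradient with respect to $\xb$ in closed form:
\begin{equation*}
  \gb^{[t]} = 2\Ab^\top(\Sbwh^{[t]})^\top\Sbwh^{[t]}(\Ab\xb^{[t]}-\bb)\ .
\end{equation*}
Next I would take the expectation over the independent local sampling matrices $\{\Omb_i^{[t]}\}_{i=1}^k$ (and, if one treats the projections as random, over $\Pb_{\{k\}}$ as well), pushing $\E[\cdot]$ inside the deterministic factors $\Ab^\top$ and $(\Ab\xb^{[t]}-\bb)$ since $\xb^{[t]}$ is fixed given the iterates up to time $t$. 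This isolates the single random quadratic form $(\Sbwh^{[t]})^\top\Sbwh^{[t]}$, and invoking $\E[(\Sbwh^{[t]})^\top\Sbwh^{[t]}]=\Ib_N$ collapses the expression to
\begin{equation*}
  \E\big[\gb^{[t]}\big] = 2\Ab^\top\Ib_N(\Ab\xb^{[t]}-\bb) = 2\Ab^\top(\Ab\xb^{[t]}-\bb) = \nabla_\xb L_{ls}(\Ab,\bb;\xb^{[t]})\ ,
\end{equation*}
which is precisely the unbiasedness claim.

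The main obstacle I anticipate is establishing the block-diagonal identity $\E[(\Sbwh^{[t]})^\top\Sbwh^{[t]}]=\Ib_N$ rigorously rather than heuristically. Because $\Sbwh$ is block diagonal as in \eqref{global_sketch_S}, the quadratic form decouples across blocks, so it suffices to verify $\E[\Sb_i^\top\Sb_i]=\Ib_n$ for a single $i$; here the careful point is that $\Omb_i$ samples $r$ of $n$ rows \emph{uniformly with replacement} and rescales by $\sqrt{n/r}$, giving $\E[\Omb_i^\top\Omb_i]=\Ib_n$, and then the unitarity (or $\E[\Pb_i^\top\Pb_i]=\Ib_n$) of $\Pb_i$ preserves this through $\E[\Sb_i^\top\Sb_i]=(n/r)\,\Pb_i^\top\,\E[\Omb_i^\top\Omb_i]\,\Pb_i=\Pb_i^\top\Pb_i=\Ib_n$. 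The remaining subtlety is simply to note the independence of the $\{\Omb_i^{[t]}\}_i$ across servers and across iterations (so that the expectation of the assembled quadratic form is genuinely the identity and the argument applies verbatim at every iteration $t$), after which the unbiasedness follows by linearity. I expect this verification to be routine once the scaling bookkeeping is made explicit.
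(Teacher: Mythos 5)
Your proof is correct and follows essentially the same route as the paper: both reduce unbiasedness of the sketched gradient $2\Ab^\top(\Sbwh_{[t]}^\top\Sbwh_{[t]})(\Ab\xb^{[t]}-\bb)$ to the identity $\E\big[\Sbwh_{[t]}^\top\Sbwh_{[t]}\big]=\Ib_N$, which the paper proves by decoupling the block-diagonal sampling matrix $\Ombh_{[t]}$ over the row blocks $\J_i$, exactly as you decouple over the diagonal blocks $\Sb_i$. One bookkeeping note: your parenthetical ``giving $\E[\Omb_i^\top\Omb_i]=\Ib_n$'' contradicts your own (correct) displayed chain, which needs $\E[\Omb_i^\top\Omb_i]=(r/n)\,\Ib_n$ so that the explicit prefactor $n/r$ cancels it.
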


\begin{Cor}[{\cite[Thm. 3]{CPH23b}}]
\label{conv_norm_cor}
Assume a bounded gradient on the least squares objective resulting from distributed GD with iterative sketching according to Algorithm \ref{alg_distr_sketch}, with step-size $\eta_t=1/(\alpha t)$ for a fixed $\alpha>0$. Then, after $T$ iterations:
\begin{equation*}
  \E\big[\|\xb^{[T]}-\xb^\star\|\big] \leqslant \ow(1/T).
\end{equation*}
\end{Cor}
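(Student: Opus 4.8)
The plan is to derive the corollary as a direct instantiation of \cite[Thm.~3]{CPH23b}, whose conclusion is precisely the stated $\ow(1/T)$ rate once its structural hypotheses are met by our scheme. I would therefore spend the bulk of the argument verifying that distributed GD with the iterative sketching of Algorithm \ref{alg_distr_sketch} falls within the scope of that theorem. Three ingredients are required. First, at each iteration $t$ the servers draw fresh sampling matrices $\Omb_i^{[t]}$, which assemble into a new global sketch $\Sbwh^{[t]}$; by Theorem \ref{subsp_emb_thm_global} each such $\Sbwh^{[t]}$ satisfies the $\ell_2$-s.e. property \eqref{subsp_emb_id} with high probability, so the sketched curvature is uniformly controlled across iterations. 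Second, by Proposition \ref{unb_grad} the gradient of the sketched objective is an unbiased estimate of $\nabla_\xb L_{ls}(\Ab,\bb;\xb^{[t]})$, which is exactly the unbiasedness condition underlying the SGD analysis. Third, since $\Ab$ is full rank, $L_{ls}$ is $\alpha$-strongly convex (with $\alpha$ on the order of the smallest squared singular value of $\Ab$) and smooth, so the prescribed step size $\eta_t = 1/(\alpha t)$ is the canonical strongly-convex SGD schedule assumed by \cite[Thm.~3]{CPH23b}.

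For a self-contained view, I would unwind the standard recursion. Writing $\gt^{[t]}$ for the sketched gradient at $\xb^{[t]}$ and expanding $\|\xb^{[t+1]}-\xb^\star\|^2$ from the update $\xb^{[t+1]}=\xb^{[t]}-\eta_t\gt^{[t]}$, taking the conditional expectation and invoking Proposition \ref{unb_grad} replaces $\E[\gt^{[t]}\mid\xb^{[t]}]$ by the true gradient. Strong convexity then lower-bounds the inner-product term by $\alpha\|\xb^{[t]}-\xb^\star\|^2$, while the bounded-gradient hypothesis caps the second-moment term $\E[\|\gt^{[t]}\|^2]$ by a constant $G^2$. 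This yields a recursion of the form $\E[\|\xb^{[t+1]}-\xb^\star\|^2]\leqslant(1-2\alpha\eta_t)\E[\|\xb^{[t]}-\xb^\star\|^2]+\eta_t^2 G^2$, into which substituting $\eta_t=1/(\alpha t)$ and inducting on $t$ delivers the decay; a final passage from the squared distance to $\E[\|\xb^{[T]}-\xb^\star\|]$, together with the refined analysis of the cited theorem, produces the claimed rate.

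The main obstacle I anticipate is reconciling the high-probability nature of the $\ell_2$-s.e. guarantee with the in-expectation control demanded by the recursion. Theorem \ref{subsp_emb_thm_global} holds only on an event of probability $1-\Theta(\delta)$, so I would need to argue that the contribution of the complementary failure event to $\E[\|\gt^{[t]}\|^2]$ is negligible---either by absorbing it into the bounded-gradient constant $G^2$ or by conditioning on the intersection of the good events across all $T$ iterations and controlling the union-bound cost. Equally delicate is confirming that the second-moment bound on the sketched gradient is uniform in $t$ despite the freshly redrawn sketches; this is where the unbiasedness of Proposition \ref{unb_grad} and the variance reduction implicit in the $\ell_2$-s.e. property must be combined, and is the point at which the cited theorem does the heavy lifting.
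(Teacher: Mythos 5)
Your proposal is correct and follows essentially the same route as the paper: the paper gives no standalone proof of this corollary, treating it as a direct application of \cite[Thm.~3]{CPH23b} once Proposition~\ref{unb_grad} establishes unbiasedness of the iteratively sketched gradient and Theorem~\ref{subsp_emb_thm_global} supplies the $\ell_2$-s.e. property, which is exactly your reduction. Your additional unwinding of the strongly-convex SGD recursion is a faithful sketch of what the cited theorem does internally, so it adds exposition rather than a different argument.
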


It is worth mentioning that even though we directly invoke \cite[Thm. 3]{CPH23b} to give a convergence guarantee when the central server aggregates the gradients and performs distributed gradient descent, the formulation here is vastly different to that of previous works. In those works, the projections are performed on the aggregated data, which is the main contribution of this section, \textit{i.e.} we showed that you do not need to aggregate the data to get a global embedding. Furthermore, this application to distributed GD can be viewed as an extension of the prior works, where now the data is meant to be kept private from the coordinator rather than the servers. Additionally, here we require less overall storage, as the coordinator only needs to keep track of the updated estimates $\gbt^{[t]}$ and $\xbh^{[t+1]}=\xbh^{[t]}-\eta_t\cdot\xbh^{[t]}$, for $\eta_t$ an appropriate step-size.

Recall that the gradient of $L_{ls}(\Ab,\bb;\xb)\coloneqq\|\Ab\xb-\bb\|_2^2$ defined in \eqref{x_star_pr_lr} for a given $\xb^{[t]}$ at iteration $t$ of GD, is
\begin{equation*}
  g^{[t]} \coloneqq \nabla_\xb L_{ls}\big(\Ab,\bb;\xb^{[t]}\big) = 2\Ab^\top\big(\Ab\xb^{[t]}-\bb\big) \in \R^d
\end{equation*}
and since $L_{ls}$ is additively and linearly separable, for the \textit{partial gradients}
\begin{equation*}
  g_i^{[t]} \coloneqq \nabla_\xb L_{ls}\big(\Ab_i,\bb_i;\xb^{[t]}\big) = 2\Ab_i^\top\big(\Ab_i\xb^{[t]}-\bb_i\big) \in \R^d
\end{equation*}
defined for each data block pair $\left\{\left(\Ab_i,\bb_i\right)\right\}_{i=1}^k$,
it follows that
\begin{equation*}
  g^{[t]} = \sum_{j=1}^k g_i^{[t]}\ .
\end{equation*}

Considering our approach and the global sketching matrix $\Sbwh^{[t]}$ defined through the global projection matrix $\Pbh$ \eqref{global_sketch_S} at iteration $t$, we get
\begin{align*}
  \ga^{[t]} &\coloneqq \nabla_\xb L_{ls}\big(\Pbh\Ab,\Pbh\bb;\xb^{[t]}\big)\\
  &= 2(\Pbh\Ab)^\top\big(\Pbh(\Ab\xb^{[t]}-\bb)\big)\\
  &= 2(\Aba)^\top\big(\Aba\xb^{[t]}-\bba\big)\\
  &= \sum_{j=1}^k 2(\Aba_i)^\top\big(\Aba_i\xb^{[t]}-\bba_i\big)\\
  &= \sum_{j=1}^k \nabla_\xb L_{ls}\big(\Pb_i\Ab,\Pb_i\bb;\xb^{[t]}\big)\\
  &= \sum_{j=1}^k \ga_i^{[t]}\\
\end{align*}
where $\Aba\coloneqq\Pbh\Ab$ and $\Aba_i\coloneqq\Pb_i\Ab_i$ for each $i\in\N_k$. The vectors $\bba$ and $\bba_{\{k\}}$ are defined analogously. In the case where $\Pb_{\{k\}}$ are unitary, it follows that $\ga^{[t]}=g^{[t]}$ for each $t$, while when $\E\big[\Pb_i^\top\Pb_i\big]=\Ib_n$ for each $i$, we have $\E\big[\ga^{[t]}\big]=g^{[t]}$ for each $t$.

To carry out GD distributively, through iterative sketching, the local servers will perform a distinct $\big\{\Omb^{[t]}_i\big\}_{i=1}^k$ at each iteration $t$, to get different local sketches
\begin{equation*}
  \Abh_i^{[t]} \coloneqq \big(\sqrt{n/r}\cdot\Omb_i^{[t]}\Pb_i\big)\cdot\Ab_i = \Sb_i^{[t]}\cdot\Ab_i
\end{equation*}
of their local dataset. Note that at each iteration, the local label vectors $\bb_{\{k\}}$ are sketched accordingly, with the same corresponding sketching matrices. The pseudocode of this approach is summarized in Algorithm \ref{distr_SGD_alg}. To simplify the pseudocode presentation, in contrast to the above description, we apply the rescaling simultaneously with $\Pb_{\{k\}}$.

It is worth noting that Algorithm \ref{distr_SGD_alg} also has the following guarantee, in which the iterative sketching is through Algorithm \ref{alg_distr_sketch}; our distributed local sketching approach.

\begin{Cor}[{\cite[Cor. 3]{CPH23b}}]
\label{conv_regret_cor}
The expected regret of the least squares objective resulting from distributed GD with iterative sketching according to Algorithm \ref{distr_SGD_alg}, with a diminishing step-size $\eta_s$, converges to zero at a rate of $\ow(1/\sqrt{s}+R/s)$, \textit{i.e.}
\begin{equation*}
  \E\big[L_{ls}(\Ab,\bb;\xb^{[s]})-L_{ls}(\Ab,\bb;\xb^\star)\big] \leqslant \ow(1/\sqrt{s}+R/s) .
\end{equation*}
\end{Cor}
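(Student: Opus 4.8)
The plan is to establish this as a direct consequence of the regret bound in \cite[Cor. 3]{CPH23b}, by verifying that its hypotheses are met by the distributed iterative sketching of Algorithm \ref{distr_SGD_alg}. The regret analysis of \cite{CPH23b} is an online/stochastic convex optimization argument that requires two structural properties of the per-iteration sketch: first, that at each iteration $t$ the applied sketch yields an \emph{unbiased} estimate of the full gradient $g^{[t]}$; and second, that the sketch satisfies the $\ell_2$-s.e. property so that the sketched objective is a faithful surrogate of $L_{ls}$. My first step would therefore be to recall that Proposition \ref{unb_grad} already supplies unbiasedness: since each server draws a fresh independent $\Omb_i^{[t]}$ at every iteration and the rescaling is chosen so that $\E[\Sb_i^\top\Sb_i]=\Ib_n$, the aggregated sketched gradient $\ga^{[t]}$ satisfies $\E[\ga^{[t]}]=g^{[t]}$ conditioned on $\xb^{[t]}$.

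The second step is to invoke Theorem \ref{subsp_emb_thm_global}, which guarantees that the global sketching matrix $\Sbwh^{[t]}$ assembled from the local projections is, with high probability, an $\ell_2$-subspace embedding of $\Ab$ with distortion $\epsilon$. Because a new $\Sbwh^{[t]}$ is drawn at every iteration, these embeddings are independent across $t$, which is precisely the iterative-sketching regime assumed in \cite{CPH23b}; the only bookkeeping needed is a union bound over the $s$ iterations to control the aggregate failure probability, which is absorbed into the $\ow$-notation. With unbiasedness and the embedding property in hand, the regret bound follows by feeding these two facts, together with the stated bounded-gradient assumption and the diminishing step-size $\eta_s$ (satisfying the usual summability conditions), into the argument of \cite[Cor. 3]{CPH23b}. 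The $1/\sqrt{s}$ term is the familiar optimal regret rate for stochastic subgradient descent under a diminishing step-size, while the $R/s$ term arises from the variance contributed by the sketch dimension $R$ at each step.

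The main obstacle I anticipate is conceptual rather than computational: one must confirm that the block-diagonal, distributively constructed sketch $\Sbwh^{[t]}$ plays the same role in the regret argument as a monolithic sketch applied to aggregated data. This hinges on the identity $\ga^{[t]}=\sum_{j=1}^k \ga_i^{[t]}$ derived above, which shows that the aggregated sketched gradient coincides exactly with the gradient of the globally sketched problem $L_{ls}(\Pbh\Ab,\Pbh\bb;\cdot)$. Once this equivalence is pinned down, the distributed nature of the construction becomes transparent to the cited analysis, and no new concentration inequality beyond those already used in Theorem \ref{subsp_emb_thm_global} and Proposition \ref{unb_grad} is required.
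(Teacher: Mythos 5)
Your proposal matches the paper's own route: the paper treats this corollary as a direct invocation of \cite[Cor.~3]{CPH23b}, justified exactly as you describe---unbiasedness of the aggregated sketched gradient from Proposition~\ref{unb_grad}, the $\ell_2$-s.e. guarantee of the per-iteration global sketch $\Sbwh^{[t]}$ from Theorem~\ref{subsp_emb_thm_global}, and the decomposition $\ga^{[t]}=\sum_{i=1}^k \ga_i^{[t]}$ showing the distributed block-diagonal sketch is equivalent to a monolithic sketch of the aggregated data. Your additional remark about a union bound over iterations is harmless bookkeeping that the paper leaves implicit, so there is no substantive difference between the two arguments.
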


\begin{algorithm}[h]
\SetAlgoLined
  \KwIn{$r$, and a random initialization $\xbh^{[0]}\in\R^d$}
  \KwOut{Approximate solution $\xbh\in\R^d$ to \eqref{x_star_pr_lr}}
  \For{$i=1$ to $k$}%
    {  
      $1.0)$ The coordinator delivers $r$ and $\xbh^{[0]}$ to the $k$ servers\\
      \underline{$i^{th}$ node}:\\
      $\ 1.1)$ Generate a random $\Pb_i\in\R^{n\times n}$\\
      $\ 1.2)$ Locally transform the data:\\
      \begin{itemize}
        \item[--] $ \Aba_i\coloneqq\big(\sqrt{n/r}\cdot\Pb_i\big)\cdot\Ab_i$
        \item[--] $ \bba_i\coloneqq\big(\sqrt{n/r}\cdot\Pb_i\big)\cdot\bb_i$
      \end{itemize}
    }
  \For{$i=1$ to $k$}
    {
      \For{$t=0,1,2,\ldots$}
        {
          \underline{$i^{th}$ node}:\\
          $\ 2.1)$ Uniformly sample $r$ rows from $\Aba_i$, through $\Omb_i^{[t]}$:\\
            $\qquad - \ \ \Abh_i^{[t]}=\Omb_i^{[t]}\cdot\Aba_i$ and $\bbh_i^{[t]}=\Omb_i^{[t]}\cdot\bba_i$\\
          $\ 2.2)$ Determine the sketched partial gradient:\\
           $\qquad - \ \ \gh_i^{[t]} = 2\big(\Abh_i^{[t]}\big)^\top\big(\Abh_i^{[t]}\xb^{[t]}-\bbh_i^{[t]}\big)$\\
          $\ 2.3)$ Deliver $\gh_i^{[t]}$ to the coordinator\\
          \underline{Coordinator}:\\
          $\ 3.1)$ Determines $\eta_t$ and the \textit{global} sketched gradient:\\
           $\qquad - \ \ \gh^{[t]} = \sum_{j=1}^k\gh_j^{[t]}$\\
          $\ 3.2)$ GD Update: $\xbh^{[t+1]}\gets\xbh^{[t]}-\eta_t\cdot\gh^{[t]}$\\
          $\ 3.3)$ Delivers $\xbh^{[t]}$ to the $k$ servers
        }

    }
\caption{Distributed Iteratively Sketched GD}
\label{distr_SGD_alg}
\end{algorithm}

\subsection{Security in Distributed Local Sketching}
\label{privacy_subsec}

Another major benefit of considering random Gaussian matrices is the security they provide. In this work, \textit{security} means that the nodes’ raw data $\Ab_i$ cannot be shared with nor recovered by the coordinator, or potential eavesdroppers. Under the assumption that $\Ab_i$ is randomly sampled from a distribution with finite variance, then the mutual information per symbol between $\Sb_i\Ab_i$ (sketch of $\Ab_i$ observed by the coordinator) and $\Ab_i$, has a logarithmic upper bound in terms of the variance; which approaches zero as $n$ increases or if $r$ is selected appropriately \cite{ZWL08,ZWL09,bloch2021overview,BP23}. This implies information-theoretic security and privacy of the local data blocks. Similar results were obtained in \cite{CMPH22,CMPH23} for $\Pb_{\{k\}}$ random unitary matrices, which made different assumptions on $\Ab_i$ and the distribution of $\Ub_i$.

\begin{Prop}
\label{security_proposition}
Assume that $\Ab_i$ is drawn from a distribution with finite variance. Then, the rate at which information about $\Ab_i$ is revealed by the compressed data $\Sb_i\Ab_i$ for $\Sb_i\in\R^{r\times n}$ a Gaussian sketch, satisfies $\sup\frac{I(\Ab_i;\Sb_i\Ab_i)}{nd}=\ow(r/n)\to0$. Specifically, the original $\Ab_i$ and the observed $\Sb_i\Ab_i$ are statistically independent, which means we obtain perfect secrecy for a small enough sketch dimension $r$.
\end{Prop}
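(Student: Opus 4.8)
The plan is to control the leaked information through differential entropies, crucially \emph{without} revealing the local sketch $\Sb_i=\tfrac{1}{\sqrt r}\Gb$ to the coordinator; it is precisely the marginalization over the unknown $\Gb$ that keeps $I(\Ab_i;\Sb_i\Ab_i)$ finite and small. (If one were to condition on $\Sb_i$, the release $\Sb_i\Ab_i$ would pin down $r$ exact linear functionals of $\Ab_i$ and the mutual information would diverge, so the secrecy of $\Sb_i$ is essential; I would state this explicitly at the outset.) Writing $B\coloneqq\Sb_i\Ab_i\in\R^{r\times d}$, I would start from $I(\Ab_i;B)=h(B)-h(B\mid\Ab_i)$ and estimate the two terms separately.

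First I would compute the conditional term exactly. Given $\Ab_i=A$ (full column rank, which is generic since $n>d$), each row of $B$ equals $\tfrac{1}{\sqrt r}g^\top A$ with $g\sim\mathcal N(0,\Ib_n)$, so the $r$ rows are i.i.d.\ $\mathcal N\!\big(0,\tfrac1r A^\top A\big)$ and $h(B\mid\Ab_i=A)=\frac{rd}{2}\log\frac{2\pi e}{r}+\frac{r}{2}\log\det(A^\top A)$. Second, I would upper bound the unconditional $h(B)$ by the maximum-entropy (isotropic Gaussian) bound under a second-moment constraint: since $\E[\Sb_i^\top\Sb_i]=\Ib_n$, one has $\E\|B\|_F^2=\E\,\tr(\Ab_i^\top\E[\Sb_i^\top\Sb_i]\Ab_i)=\E\|\Ab_i\|_F^2=nd\sigma^2$ (with $\sigma^2$ the per-entry variance), whence $h(B)\le\frac{rd}{2}\log\!\big(2\pi e\,n\sigma^2/r\big)$. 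Subtracting the estimates, the $\frac{rd}{2}\log(2\pi e/r)$ terms cancel and I obtain $I(\Ab_i;B)\le\frac{rd}{2}\log(n\sigma^2)-\frac{r}{2}\E[\log\det(\Ab_i^\top\Ab_i)]$; dividing by $nd$ and writing $\Ab_i^\top\Ab_i=n\hat\Sigb$ with $\hat\Sigb\coloneqq\tfrac1n\Ab_i^\top\Ab_i$ gives $\frac{1}{nd}I(\Ab_i;B)\le\frac{r}{2n}\big(\log\sigma^2-\tfrac1d\E[\log\det\hat\Sigb]\big)$.

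To finish, I would argue that the bracketed quantity is $\ow(1)$ uniformly over finite-variance sources, so that the prefactor $r/n$ governs the rate and $\frac{1}{nd}I(\Ab_i;B)=\ow(r/n)\to0$. The upper side is immediate from Jensen, $\E[\log\det\hat\Sigb]\le\log\det\E[\hat\Sigb]=d\log\sigma^2$, which merely reconfirms $I\ge0$; the substantive content is the matching \emph{lower} bound on $\E[\log\det\hat\Sigb]$, i.e.\ keeping the smallest eigenvalue of the sample second-moment matrix away from $0$ in a logarithmic-average sense. This is exactly the main obstacle: it is where the bare finite-variance hypothesis must be upgraded to nondegeneracy/well-conditioning of $\Ab_i$, and it is handled by the concentration of $\hat\Sigb$ about a bounded, invertible population covariance when $n\gg d$, as in the compressed-regression analyses of \cite{ZWL08,ZWL09}. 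Taking the supremum over admissible distributions and letting $r/n\to0$ (e.g.\ $r$ fixed or growing sublinearly in $n$) then yields a vanishing information rate, which I would phrase as the asymptotic statistical independence / perfect-secrecy claim; the analogous statement for unitary $\Pb_{\{k\}}$ follows from the same computation with $A^\top A$ in place of the Gaussian covariance, recovering \cite{CMPH22,CMPH23}.
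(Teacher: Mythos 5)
Your proposal is correct, but it does genuinely different (and more) work than the paper: the paper's entire proof of this proposition is a one-line appeal to Theorems 5.1 and 5.2 of \cite{ZWL08}, combined with the definition of perfect secrecy from \cite{bloch2021overview}, whereas you reconstruct the entropy argument that lies inside those cited theorems. Your exact evaluation of $h(\Sb_i\Ab_i\mid\Ab_i)$ for Gaussian $\Sb_i$, the maximum-entropy bound on $h(\Sb_i\Ab_i)$ from $\E\|\Sb_i\Ab_i\|_F^2=nd\sigma^2$, and the cancellation leaving $\frac{1}{nd}I(\Ab_i;\Sb_i\Ab_i)\leqslant\frac{r}{2n}\bigl(\log\sigma^2-\frac{1}{d}\E\bigl[\log\det\bigl(\tfrac{1}{n}\Ab_i^\top\Ab_i\bigr)\bigr]\bigr)$ is precisely the mechanism of the compressed-regression privacy results being cited, so your mathematical route coincides with the literature the paper leans on, but none of it appears in the paper itself. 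Your version buys three things the citation-only proof obscures: (i) the explicit observation that secrecy hinges on $\Sb_i$ itself remaining unknown to the coordinator (conditioned on $\Sb_i$, the sketch reveals $r$ exact linear functionals of $\Ab_i$ and the mutual information diverges); (ii) the identification of the one substantive analytic obstacle, namely the lower bound on $\E[\log\det(\tfrac{1}{n}\Ab_i^\top\Ab_i)]$, which genuinely requires nondegeneracy/well-conditioning beyond the bare finite-variance hypothesis of the statement --- indeed, if $\Ab_i$ is rank-deficient with positive probability, then $\Sb_i\Ab_i$ reveals the row space of $\Ab_i$ exactly and the conclusion itself fails, so this hypothesis gap is real and is silently inherited by the paper from the assumptions of \cite{ZWL08}; and (iii) the more careful phrasing of the conclusion as asymptotic independence (a vanishing per-symbol information rate) rather than literal finite-$n$ statistical independence. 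What the paper's route buys is brevity and delegation of these regularity conditions to the cited theorems. The one tightening I would ask of you: promote the regularity invoked in your last step (invertible population covariance, $n\gg d$, concentration of the sample second-moment matrix) from a passing remark into an explicit hypothesis, since without it the supremum over finite-variance sources in your bound is $+\infty$ and the $\ow(r/n)$ rate cannot be concluded.
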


This is of particular interest to FL, which allows training models across multiple decentralized devices or servers holding local data samples, without exchanging them. In standard FL, a common procedure is for the local servers to communicate the model after a few iterations and then the central server aggregates or averages the models which are delivered back, and this procedure is repeated. Here, the data is clearly not directly communicated, though communicating updated models may still be vulnerable to various privacy attacks, \textit{e.g.} membership inference attacks \cite{FJR15} and model inversion attacks \cite{SSSS17}. This is where deploying a procedure like ours both accelerates and enhances security of federated algorithms.

We note that in order to prove information-theoretic security guarantees for our approach, one needs to make some mild but necessary assumptions regarding the random projection utilized for constructing $\Sb_i$, and the data matrix $\Ab_i$ \cite{CMPH23}. For a fixed $i\in\N_k$, the message space $\M_i$ needs to be finite, which $\M_i$ in our case corresponds to the set of possible orthonormal bases of the column-space of $\Ab_i$. This is something we do not have control over, and depends on the application and distribution from which we assume the data is gathered. Therefore, we assume that $\M_i$ is finite. For this reason, we consider a finite multiplicative subgroup $(\Otil_{\Ab_i},\cdot)$ of $O_n(\R)$, which contains all potential orthonormal bases of $\Ab_i$.

Furthermore, this approach resembles the \textit{one-time pad}, one of the few encryption schemes known to provide perfect secrecy. The main difference between the two approaches, is that we work over the multiplicative group $(\Otil_{\Ab_i},\cdot)$ whose identity is $\Ib_n$, while the one-time pad is defined over the additive group $\big((\Z_2)^m,+\big)$ whose identity is $\vec{\mathbf{0}}_{m}$.

\section{Experiments}
\label{exper_sec}

In this section we consider several experiments to corroborate what we presented on distributed local and hybrid sketching for $\ell_2$-embeddings, and provide numerical justification. All experiments were carried out on a single server, though the implementation of the local sketches through $\Sbwh$ \eqref{global_sketch_S} can be carried out distributively and/or in parallel by $k$ server nodes.

\subsection{Experiments on Distributed Sketching}
\label{exper_sec_distr_sketching}

In the following experiments we considered the errors according to the $\ell_2$-s.e. error (Fig.~\ref{experiment_l2_error}) and relative regression error \eqref{epsilon_error_LS} (Fig.~\ref{regression_experiments}), for $\Ab\in\R^{18000\times 40}$ following a $t$-distribution. We considered $R$ varying from 30\% to 90\% of $N$, for different values of $k$, and $\Pb_i$ rescaled Gaussian matrices. Through these experiments, we convey that the difference in error is small, while we save on computing on the transformed blocks $\{\Pb_i\Ab_i\}_{i=1}^k$ by a factor of $(k_1/k_2)^2$ on the overall computation when we move from $k=k_1$ to a smaller $k=k_2$. By combining steps $1)$ and $2)$ of Algorithm \ref{alg_distr_sketch}, we require only $\ow(rndk)$ operations by each server. Additionally, more accurate approximations were observed with higher $N$.

\begin{figure}[h]
    \centering
    \begin{minipage}[t]{0.48\textwidth}
        \centering
        \captionsetup{type=figure}
        \includegraphics[scale=.16]{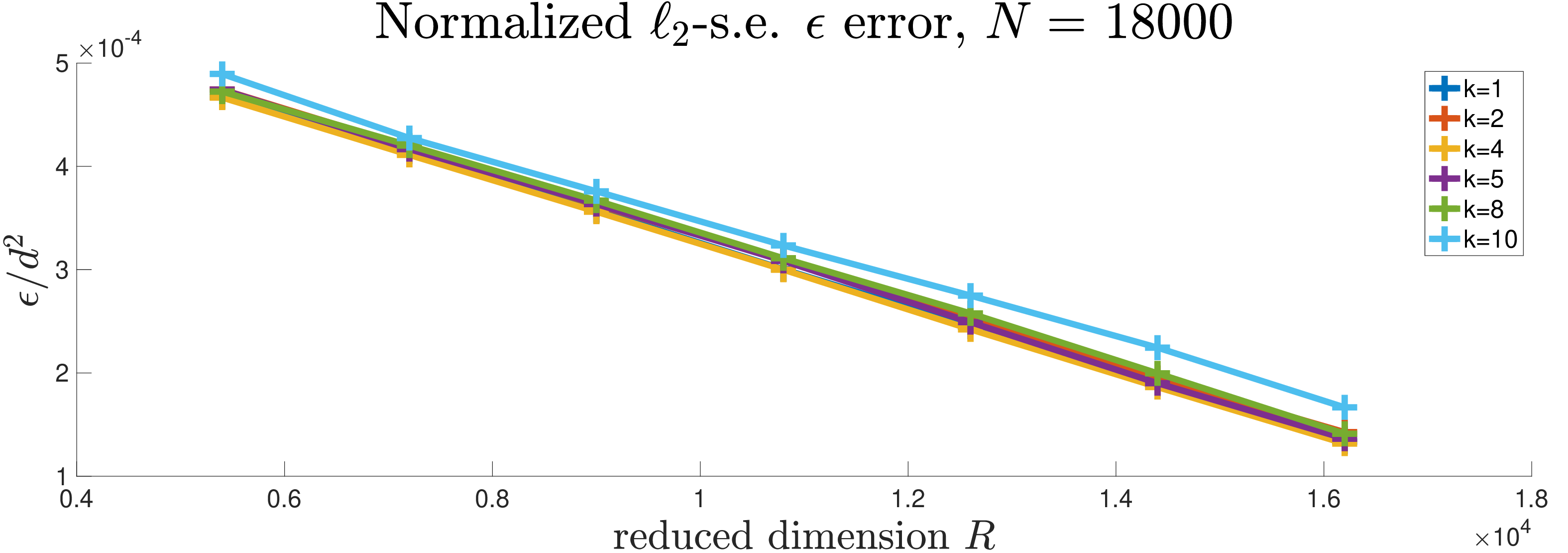}
        \caption{Normalized $\ell_2$-s.e. error \eqref{subsp_emb_id}, for varying $k$ and $R$.}
        \label{experiment_l2_error}
    \end{minipage}
    \hfill
    \begin{minipage}[t]{0.48\textwidth}
        \centering
        \captionsetup{type=figure}
        \includegraphics[scale=.17]{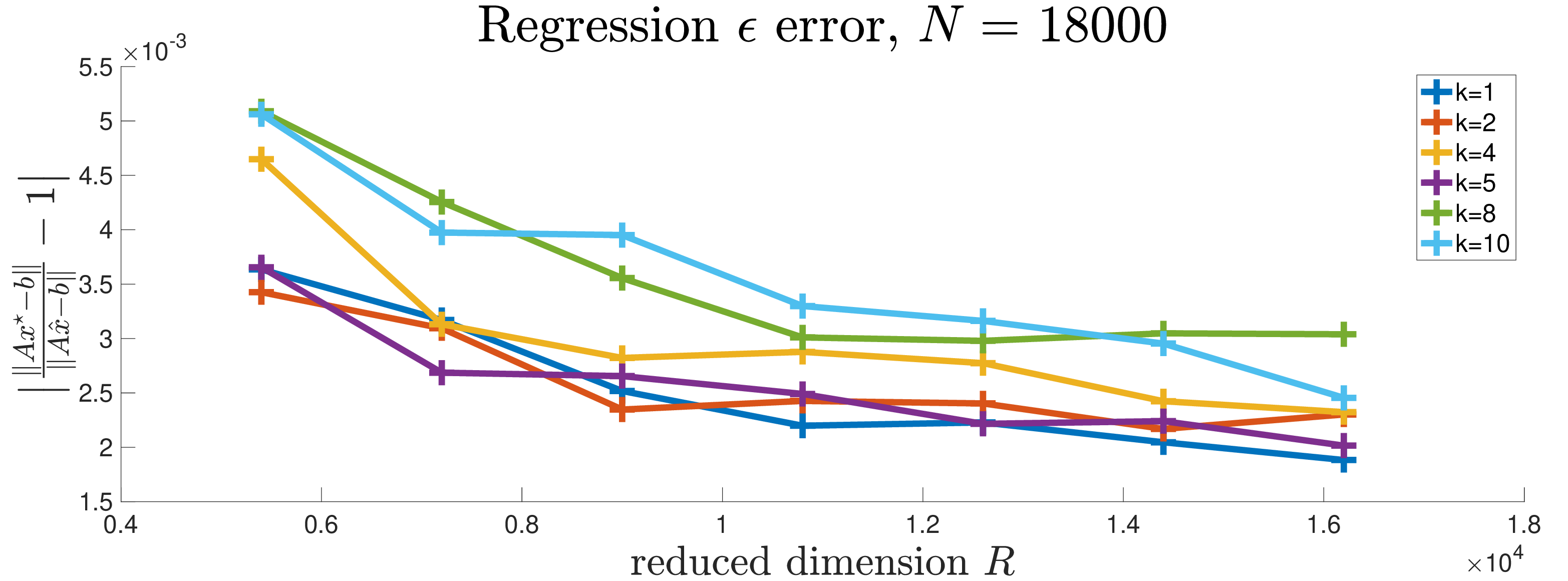}
        \caption{Regression error \eqref{epsilon_error_LS}, for varying $k$ and $R$.}
        \label{regression_experiments}
    \end{minipage}
\end{figure}

\subsection{FedAvg with Local Sketching on the Years Dataset}
\label{fedavg_expers}

In this experiment, we considered $N=10^4$ songs from the `Years Dataset' \cite{Lichman2011}, each comprised of $d=90$ audio features, to infer their release year. We wanted to evaluate the performance of our proposed method from Sec.~\ref{distr_local_sk_sec} when applied a priori to a FL task, such as FedAvg \cite{MMRHA17}. Specifically, each server applies a random matrix $\Pb_i$ to its dataset and then a uniform sampling matrix $\Omb_i$ before FedAvg is deployed; or updates the sampling matrix $\Omb_i^{[t]}$ at each iteration. The latter results in \textit{iterative sketching}, as at each iteration we have fresh sketches $\Sb_i^{[t]}\Ab_i$. The benefit of these approaches is that each server update is computed in a reduced time, as it is now considering a sketched dataset.

In Fig.~\ref{iter_sketching_fedavg} we considered $\mu=0.85$ and $k=8$, and used a diminishing step-size of $1/(L(t+1))$ for $L=\|\Ab\|_2^2$ the Lipschitz constant of the full gradient. We observe that with iterative local sketching, in this experiment, the estimated $\xbh^{[t]}$ converges at a faster rate compared to non-sketched FedAvg and an analogous scheme where sketching corresponds to naive uniform sampling. It is also evident that iteratively updating the local SRHTs (this can also be viewed as an extension of \cite{TZCC22}, as they only consider one global sketch) outperforms vanilla FedAvg, and justifies our choice for local SRHTs in our hybrid sketching approach. This is also the case for any of the other random projections considered.

\begin{figure}[h]
  \centering
    \includegraphics[scale=.24]{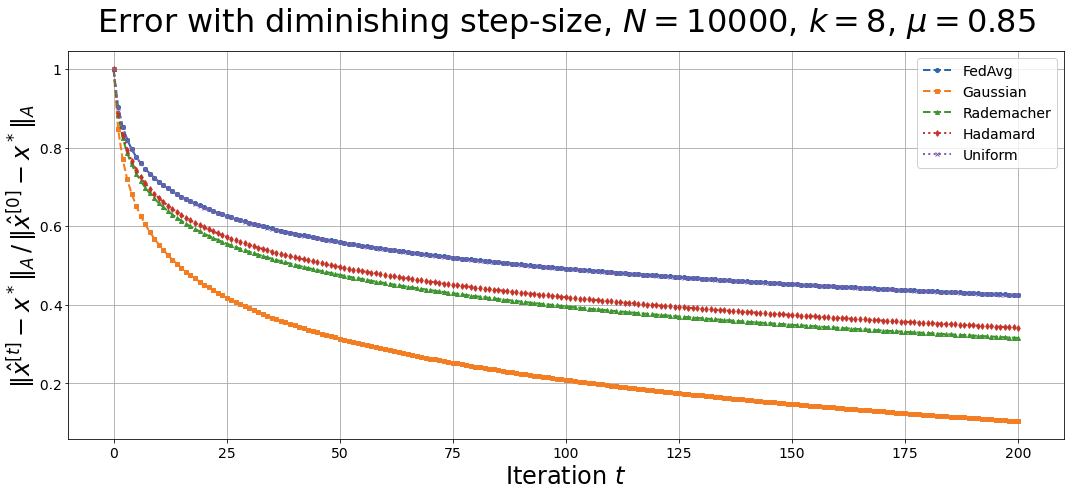}
    \caption{Error of $\xb^{[t]}$ at each iteration, for FedAvg with iterative sketching.}
  \label{iter_sketching_fedavg}
\end{figure}

\subsection{Regression and $\ell_2$-s.e. Verification}

Similar to Sec.~\ref{exper_sec_distr_sketching}, for the following experiments we generated data matrices $\Ab\in\R^{N\times 40}$ following a $t$-distribution, where $N$ and $k$ are noted in the plots' titles. We indicate the overall reduction factor by $\tau\coloneqq \rho\mu$, \textit{i.e.} $\Rt=\tau N$. We considered an initial reduction factor of $\mu$ varying between $0.5$ and $0.85$ in increments of $0.05$, and the second reduction was by a factor of $\rho=0.7$. In all the experiments of this subsection, it is clear that our error degrades gracefully, as $\tau$ decreases.

In Fig.~\ref{experiment_l2_error_hybrid_8192}, we observe a clear benefit in terms of the empirically observed $\epsilon$ of \eqref{subsp_emb_id} when $\Phib$ is a Rademacher sketch, for $k=8$. There also seems to be a slight improvement when the local sketches are SRHTs, though when $\Phib$ is an SRHT, the hybrid sketch is inferior to standard sketching approaches. This justifies our choice for $\Phib$ and $\Sb_{\{k\}}$ from Sec.~\ref{subsec_SRHT_Rrad}.

\begin{figure}[h]
  \centering
    \includegraphics[scale=.18]{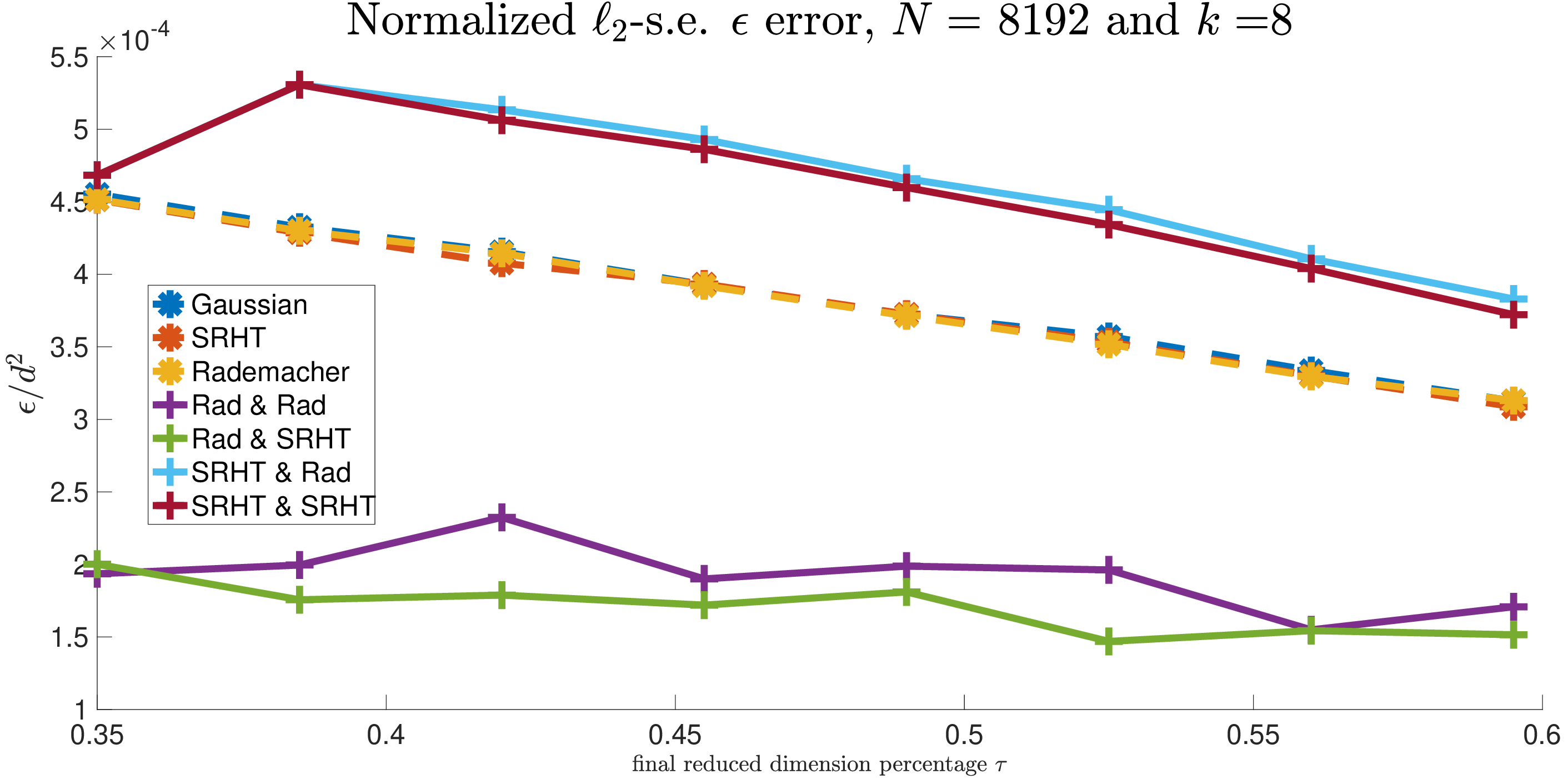}
    \caption{$\ell_2$-s.e. error \eqref{subsp_emb_id}, for fixed $\rho=0.7$ and $\mu$ ranging from $0.7$ to $0.85$.}
  \label{experiment_l2_error_hybrid_8192}
\end{figure}

For the same experiment, in Fig.~\ref{experiment_l2_error_hybrid_3000} we considered $k=32$ and $N=3000$. Here, due to the fact that $N$ and $n$ are both significantly smaller, we observe a noticeable distinction when $\Sb_{\{k\}}$ are Rademacher compared to SRHT. In this case though, since $n$ is quite small, we do not have a significant benefit in terms of time complexity when it comes to local sketching. Also, hybrid sketching outperforms standard sketching techniques. Similarly, in Fig.~\ref{experiment_l2_error_hybrid_8192_k32} we considered $N=8192$ with $k=32$, and analogous observations and conclusions were deduced. In this case, since $N$ was higher, we had better results. This is further justified by the fact that for higher $N$, the entries of $\Sbwt$ provide a better approximation to Gaussian random variables.

\begin{figure}[h]
    \centering
    \begin{minipage}[t]{0.48\textwidth}
        \centering
        \captionsetup{type=figure}
        \includegraphics[scale=.18]{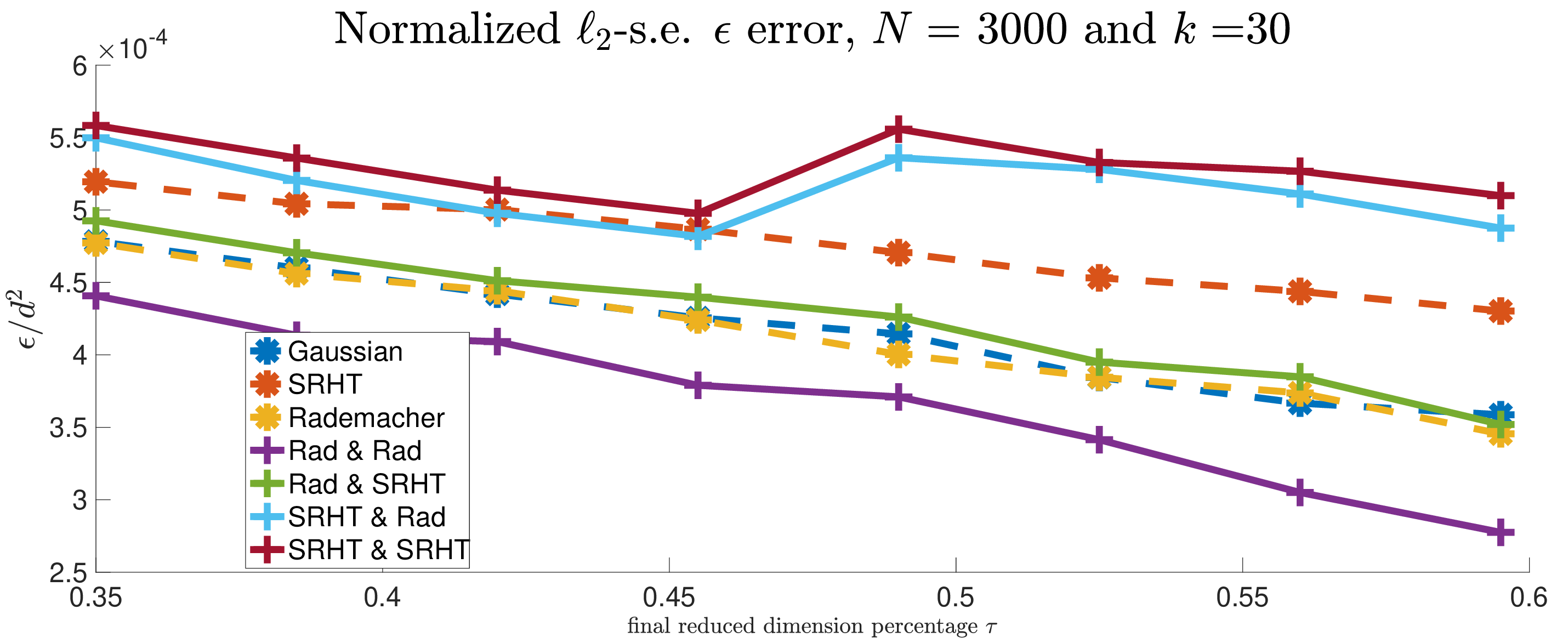}
        \caption{$\ell_2$-s.e. error \eqref{subsp_emb_id}, for $\rho=0.7$ and $\mu\in[0.7,0.85]$.}
        \label{experiment_l2_error_hybrid_3000}
    \end{minipage}
    \hfill
    \begin{minipage}[t]{0.48\textwidth}
        \centering
        \captionsetup{type=figure}
        \includegraphics[scale=.18]{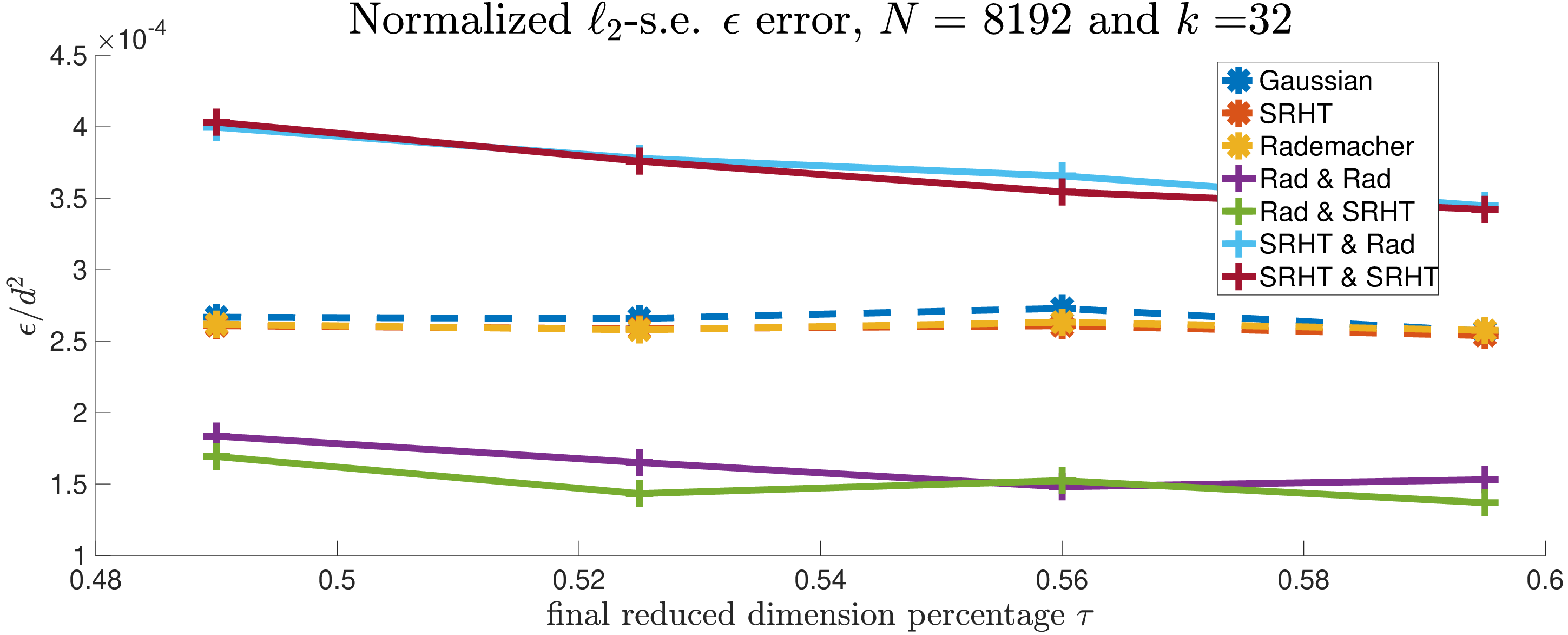}
        \caption{$\ell_2$-s.e. error \eqref{subsp_emb_id}, for $\rho=0.7$ and $\mu\in[0.7,0.85]$.}
        \label{experiment_l2_error_hybrid_8192_k32}
    \end{minipage}
\end{figure}

\subsection{Hybrid Sketching on the Years Dataset}

Considering the same data set as above \cite{Lichman2011}, we numerically solved \eqref{x_star_pr_lr} and compared our proposed approach to that of \cite{BP23} (Fig.~\ref{years_dataset_residual_error_fig}). We considered $k\in\{4,8,16\}$ and the same reduction factors $\rho$, $\mu$ as in Sec.~\ref{fedavg_expers}, and averaged each experiment over 3 different instances. It is evident from Fig.~\ref{years_dataset_residual_error_fig} that our choice for hybrid sketching had better overall performance than the hybrid sketching combination of \cite{BP23} (Uniform  Sampling and then SJLT) in terms of the residual error in the approximation of $\xbh$. Analogous results were also obtained for $N=2\cdot10^4$ and for each $k$ in the case of the ``Uniform+SJLT'' sketch.

\begin{figure}[h]
  \centering
    \includegraphics[scale=.28]{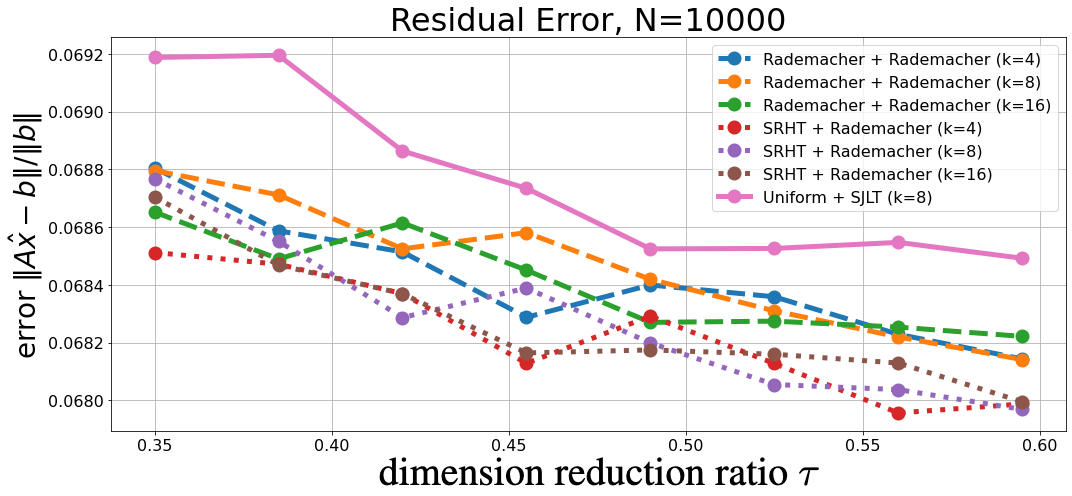}
    \caption{Normalized residual error for regression on the Years Dataset \cite{Lichman2011}.}
  \label{years_dataset_residual_error_fig}
\end{figure}

\section{Future Directions and Concluding Remarks}
\label{concl_sec}


In the first portion of this work, we showed how we can obtain a global sketch of data distributed across a network in terms of a spectral guarantee, by performing local sketches and not aggregating until after the sketched matrices have taken place, satisfying desiderata $(\mathrm{a})$ and $(\mathrm{b})$ mentioned in the introduction. We also discussed the privacy aspect of this approach, as the local sketchings can provide security guarantees, meeting desideratum ($\mathrm{c}$). A potential future direction was mentioned in Sec.~\ref{analysis_subsec}, in terms of improving the flattening of the leverage scores distributively. Another interesting avenue is investigating the ideas presented in this paper, when used for sparser/distributed Johnson-Lindenstrauss transforms with partitions across the features of the data points.

Our distributive sketching approach was partly motivated by FL, where data is only available to local servers who may wish not to reveal their information. Our proposed approach can be utilized in conjunction with well-known first order federated algorithms to further accelerate them, which we also presented in our experiments. It also permits exchange of local sketches for better approximations by correlating the sketches, while also providing a global summary of the data if desired. Moreover, by obtaining a compressed summary of the global dataset, one can administer global updates of mainstream federated approaches through distributed or centralized first and second order methods, while still meeting the objective of keeping the local data secret. Specific applications include PCA and low-rank recovery \cite{GMCM20,SV23}, and subspace tracking \cite{NVR22}. To this extent, we also presented how our method can be utilized in distributed GD. Further investigation of these connections is worthwhile future work.

In the second part, we focused on hybrid sketching. Through hybrid sketching, we were able to satisfy all desiderata $(\mathrm{a})$-$(\mathrm{e})$. Though a simple idea, it has not been considered in previous works, other than few experiments carried out in \cite{BP23} with heuristic embeddings and reasoning, while lacking theoretical guarantees. As is evident, it is a powerful and beneficial technique which should garner more attention, specially in distributed environments. We extensively discussed the combination we believe is ideal for hybrid sketching and the aforementioned desiderata, and provided numerical justification. This idea can be also be extended to sequentially applying multiple sketching matrices, for instance in training a neural network comprised of multiple training layers, which poses a bottleneck in contemporary machine learning. Motivated by a $k$-medians clustering algorithm \cite{guha2000clustering}, it would be interesting to see where else these ideas could be leveraged.


\bibliographystyle{IEEEtran}
\bibliography{refs}

\appendices

\section{Proofs of the Main Body}
\label{app_proofs}

In this Appendix, we include all the proofs missing from the main manuscript.

\begin{proof}{[Lemma \ref{exp_local_scores}]}
Recall that for any $j\in\I_i$:
\begin{equation*}
  \ellh_j=\|\Ubh_{(j)}\|_2^2=\|\eb_i^\top\Ubh\|_2^2=\eb_j^\top\Ubh\Ubh^\top\eb_j\ .
\end{equation*}
It then follows that
\begin{align*}
  \E[\ellh_j] &= \E\left[\tr(\eb_j^\top\Ubh\Ubh^\top\eb_j)\right]\\
  &= \E\left[\tr(\eb_j\eb_j^\top\cdot\Ubh\Ubh^\top)\right]\\
  &\overset{\sharp}{=} \sum_{l\in\I_i}\frac{1}{|\I_i|}\cdot\tr\big(\eb_l\eb_l^\top\cdot\Ubh\Ubh^\top\big)\\
  &= \frac{1}{n}\cdot\tr\left(\sum_{l\in\I_i}\eb_l\eb_l^\top\cdot\Ubh\Ubh^\top\right)\\
  &\overset{\flat}{=} \frac{1}{n}\cdot\tr\left(\Ib_N\big|_{\I_i}\cdot\Ubh\Ubh^\top\right)\\
  &= \frac{1}{n}\cdot\tr\left(\Ubh_i\Ubh_i^\top\right)\\
  &= \frac{\Ll_i}{n}\ .
\end{align*}
In $\sharp$ we invoked the definition of expectation, to express $\E[\ellh_j]$ in terms of the leverage scores of the transformed block $\Ubh_i$. Further note that in $\flat$, the matrix $\Ib_N\big|_{\I_i}$ acts similar to a sampling matrix on the rows of $\Ubh\Ubh^\top$, as when we multiply with $\Ib_N\big|_{\I_i}$ we only retain the rows indexed by $\I_i$ of $\Ubh\Ubh^\top$; while the remaining rows are set to zero.
\end{proof}

\begin{proof}{[Proposition \ref{norm_lvg_prop}]}
We know that $\ellh_j\in[0,\Ll_i]$, and the normalized scores are $\ellb_j=\ellh_j/d$ for each $j\in\I_i$. By Lemma \ref{exp_local_scores}, it follows that
$$ \E[\ellb_j]=\E[\ellh_j/d]=\frac{1}{d}\cdot\E[\ellh_i]=\frac{\Ll_i}{nd}. $$
By applying Hoeffding's Inequality \cite{Mah16} for a fixed $\zeta\geqslant\zeta'$:
\begin{equation*}
  \Pr\left[|\ellb_j-\Ll_i/(nd)|<\zeta\right] > 1-2e^{-2\left(\zeta d/\Ll_i\right)^2} \geqslant 1-\xi \ .
\end{equation*}
\end{proof}

\begin{proof}{[Proposition \ref{prop_balls_bins}]}
We first note that $\bigcup_{l=1}^R\I_i$ is the multiset union of $R$ copies of the index set $\I_i$, as we are considering sampling with replacement. In our context, $\I_{\{k\}}$ represent the bins, and the allocation of the $R$ balls into the bins are represented by $\Scal$. The sub-multiset $S_i$ indicates which balls fell into the $i^{th}$ bin. Let $B_{ji}$ be the indicator random variable 
$$ B_{ji} = \begin{cases}
			1 & \text{if ball $j$ falls into bin $i$}\\
            0 & \text{otherwise}
		 \end{cases}
$$
for which $\Pr[B_{ji}]=1/k$ and $\E[B_{ji}]=1/k$, as $B_{ji}$ follows a Bernoulli distribution. Further define $Y^i\coloneqq\sum_{j=1}^RB_{ji}$ which follows a Binomial distribution, hence
\begin{equation*}
  \E\left[Y^i\right]=R/k=r \quad \text{and} \quad  \Var\left[Y^i\right] = (R-1)/R\ . 
\end{equation*}
It is clear that $Y^i=\#S_i$, so by Chebyshev's inequality the proof is complete.
\end{proof}

\begin{proof}{[Theorem \ref{subsp_emb_thm_global}]}
For each $i\in\N_k$ and every $j\in\I_i$, by Proposition \ref{norm_lvg_prop} we can assume that $\ellb_j\approx\Ll_i/(nd)$ (w.h.p.), where $\ellb_j$ is the sampling probability $\pit_j$ that row $j$ is sampled at each independent trial. Hence, Algorithm \ref{alg_distr_sketch} is now performing approximate leverage score sampling with distribution $\Uut_N=\bigcup_{\iota=1}^k\left(\bigcup_{j\in\I_\iota}\big\{\Ll_\iota/(nd)\big\}\right)$. The misestimation factor for sampling according to $\Uu_N$ instead of $\Uut_N$, is
\begin{equation*}
  \betah = \min_{\iota\in\N_k}\left\{\frac{\Ll_\iota/(nd)}{1/N}\right\} = \min_{\iota\in\N_k}\big\{\Ll_\iota\cdot(k/d)\big\}\ .
\end{equation*}

By Proposition \ref{prop_balls_bins}, the resulting $\Ombh$ through the local sampling matrices $\Omb_{\{k\}}$, is close to a uniform sampling matrix (w.r.) of $R$ out of $N$ elements. By applying the above conclusions to Theorem \ref{lvg_score_se_thm}, the proof is complete.
\end{proof}

\begin{proof}{[Corollary \ref{subsp_emb_cor}]}
Fix an $i\in\N_k$. By the Gram-Schmidt procedure on $\frac{1}{\sqrt{r}}\Gb_i$, the resulting $\Pbh_i$ is a random unitary matrix drawn from $O_n(\R)$. This holds true for all $i\in\N_k$. The claim then follows directly from Theorem \ref{subsp_emb_thm_global}.
\end{proof}

\begin{proof}{[Proposition \ref{prop_non_obl}]}
Assume that this is possible. Recall that for non-oblivious embeddings rely on the dataset $\Ab$, so in order to account for the heterogeneity in the global dataset $\Ab$, some statistics about the each $\Ab_i$ would need to be shared and compared in order to determine the global properties. This though contradicts the assumption that no communication takes place in order to determine the local sketching matrices $\Sb_{\{k\}}$.
\end{proof}

\begin{proof}{[Proposition \ref{hybrid_Prop}]}
For $\epsilon\in(0,1)$, the choice of $\epst$ implies $(1+\epst)^2=1+\epsilon$, and $(1-\epst)^2=(5+\epsilon-4\sqrt{1+\epsilon}) > 1-\epsilon$, \textit{i.e.} $\left[(1-\epst)^2,(1+\epst)^2\right] \subset \left[1-\epsilon,1+\epsilon\right]$. Therefore, we have $\epsilon=(1+\epst)^2-1>1-(1-\epst)^2$, which means that if an arbitrary matrix $S\in\R^{\Rt\times N}$ satisfies 
\begin{equation}
\label{subsp_emb_def_hybrid}
  (1-\epst)^2\cdot\|\xb\|\leqslant\|S\xb\|\leqslant(1+\epst)^2\cdot\|\xb\|
\end{equation}
for any $\xb\in\image(\Ub)$ with a high probability, then it satisfies \eqref{subsp_emb_def} with an even higher probability. For our case, we are considering $S=S_2\cdot S_1$ with $S_1:\R^N\to\R^R$ and $S_2:\R^R\to\R^\Rt$ satisfying \eqref{subsp_emb_def} for $\image(\Ub)$ and $\image(\Pib_1\Ub)$ respectively, each with distortion parameter $\epst$. By submultiplicativity of the $\ell_2$-norm, the product $S=S_2\cdot S_1$ satisfies \eqref{subsp_emb_def_hybrid}.

For simplicity and to maintain abbreviated notation, we consider the event of the hybrid sketch satisfying an $\ell_2$-s.e. with distortion error $\epsilon$, rather than the exact event of \eqref{subsp_emb_def_hybrid}. This suffices for our purpose, since
\begin{align*}
\label{implied_prob}
  \Pr\Big[(1-\epst)^2\cdot\|\xb\|\leqslant\|S\xb\|\leqslant&(1+\epst)^2\cdot\|\xb\|\Big] \geqslant \Pr\big[\|S\xb\|\leqslant_\epsilon\|\xb\|\big] \geqslant 1-\delta\ .
\end{align*}

For now, let $(\epsilon_1,\delta_1)=(\epsilon_2,\delta_2)=(\epst,\deltatil)$ denote the parameters of the corresponding sketching matrices. By the assumption on $\Pib_1$, for any $\xb$ we have $\Pr\left[\|\Pib_1\xb\|>_{\epsilon_1}\|\xb\|\right] < \delta_1$. Further define the set of all $\ell_2$-s.e. of $\Ub$ with distortion parameter $\epsilon$:
\begin{equation*}
  \Ecal_\Ub^\epsilon\coloneqq\left\{S_i\ : \ S_i \text{ is an } \epsilon \ \ell_2\text{-s.e. of } \Ub \right\}.
\end{equation*}

Let $\epsilon'=\max\big\{(1+\epsilon_1)(1+\epsilon_2)-1,1-(1-\epsilon_1)(1-\epsilon_2)\big\}=(1+\epsilon_1)(1+\epsilon_2)-1$, with a corresponding $\delta'$. The initial embedding of $\xb$ in the probability events is denoted by $\yb=S\xb$. By the law of total probability, it follows that
\begin{align*}
  \Pr\big[\|\Pib_2\Pib_1\xb\|>_{\epsilon'}\|\xb\|\big] &= \sum_{S\in\Ecal_\Ub^{\epsilon'}} \Pr\big[\Pib_1=S\big]\cdot\Pr\big[\|\Pib_2 S\xb\|>_{\epsilon'}\|\xb\| \ : \ \Pib_1=S \big]\\
  &\quad + \sum_{T\notin\Ecal_\Ub^{\epsilon'}} \Pr\big[\Pib_1=T\big]\cdot\Pr\big[\|\Pib_2 T\xb\|>_{\epsilon'}\|\xb\| \ : \ \Pib_1=T \big]\\
  &\overset{\natural}{<} \sum_{S\in\Ecal_\Ub^{\epsilon'}} \Pr\big[\Pib_1=S\big]\cdot\Pr\big[\|\Pib_2 \yb\|>_{\epsilon_2} \|\yb\| \ : \ \Pib_1=S \big] + \sum_{T\notin\Ecal_\Ub^{\epsilon'}} \Pr\big[\Pib_1=T\big]\cdot1 \\
  &\overset{\flat}{<} \sum_{S\in\Ecal_\Ub^{\epsilon'}} \Pr\big[\Pib_1=S\big]\cdot\delta_2 + \sum_{T\notin\Ecal_\Ub^{\epsilon'}} \Pr\big[\Pib_1=T\big]\\
  &\overset{\sharp}{=} \delta_2\cdot\Pr\big[\Ecal_\Ub^{\epsilon_1}\big] + \Pr\left[\big(\Ecal_\Ub^{\epsilon'}\big)^c\right]\\
  &\overset{\diamondsuit}{<} \delta_2\cdot 1 + \delta' \\
  &\overset{\heartsuit}{<} \delta_1+\delta_2\ .
\end{align*}
In $\natural$, we used the fact that the conditional probability in the second summand is always less than 1, and the fact that for an embedding with a smaller distortion we have a higher error probability, \textit{i.e.} $\delta'<\delta_2$ since $\epsilon'>\epst=\epsilon_2$. In $\flat$ we use the fact that the conditional probability in the first summand corresponds to the failure event of $\Pib_2$ not being an $\epsilon_2$ $\ell_2$-s.e., which occurs with probability less than $\delta_2$. In $\sharp$ we concisely represent the two summations, which are equal to the depicted events. In turn, we substitute their upper bounds which are respectively 1 and $\delta'$. Lastly, in $\heartsuit$ we use the fact that $\delta_1>\delta'$. By our choice of $\deltatil$, we know that $\delta_1+\delta_2=\delta$.

For our choice of $\deltatil$, the complementary probable event of the above is: $\Pr\big[\|\Pib_2\Pib_1\xb\|\leqslant_{\epsilon'}\|\xb\|\big] \geqslant 1-\delta$. Finally, for our choice of $\epst$ have $\epsilon'=\epsilon$, hence $\Pr\big[\|\Sbh\xb\|\leqslant_{\epsilon}\|\xb\|\big]\geqslant1-\delta$. This completes the proof.
\end{proof}

\begin{proof}{[Theorem \ref{hybrid_thm}]}
By Theorem \ref{subsp_emb_thm_global} and Corollary \ref{subsp_emb_cor}, we know that our distributed local sketching approach through Algorithm \ref{alg_distr_sketch} can be used to obtain an $(\epst,\deltatil)$ $\ell_2$-s.e. sketch $\Abwh=\Sbwh\Ab$ of $\Ab$. Then, by applying another $(\epst,\deltatil)$ $\ell_2$-s.e. sketching matrix $\Phib$ on $\Abwh$, according to Proposition \ref{hybrid_Prop}, we obtain an $(\epsilon,\delta)$ $\ell_2$-s.e. sketch $\Abwt=\Phib\cdot\Abwh=(\Phib\Sbwt)\cdot\Ab$ of $\Ab$. This completes the proof.
\end{proof}

\begin{proof}{[Proposition \ref{unb_grad}]}
Recall that the gradients of \eqref{x_star_pr_lr} and the modified sketched problem \eqref{x_til_pr_lr} at iteration $t$, are respectively
\begin{equation*}
  \gb^{[t]} = 2\Ab^\top(\Ab\xb^{[t]}-\bb) \quad \text{and} \quad \gbt^{[t]} = 2\Ab^\top(\Sb^\top\Sb)(\Ab\xb^{[t]}-\bb)\ .
\end{equation*}
By applying different local sampling matrices $\big\{\Omb^{[t]}_i\big\}_{i=1}^k$, the only thing that changes in \eqref{global_sketch_S} is the global sampling matrix $\Ombh$ at each $t$, which we denote by $\Ombh_{[t]}$, \textit{i.e.}
\begin{equation*}
  \Ombh_{[t]} \coloneqq \begin{pmatrix} \Omb_1^{[t]} & & \\ & \ddots & \\ & & \Omb_k^{[t]} \end{pmatrix}\in\{0,1\}^{R\times N}\ .
\end{equation*}
We denote the resulting sketching matrix by $\Sbwh_{[t]}$. Since in $\gbt^{[t]}$ the only source of randomness comes from $\Sbwh_{[t]}$, we have $\E\big[\gbt^{[t]}\big] = 2\Ab^\top\E\big[\Sbwh_{[t]}^\top\Sbwh_{[t]}\big](\Ab\xb^{[t]}-\bb)$. By modifying the proof of {\cite[Lemma 1]{CMPH23}}, we can show that $\E\big[\Sbwh_{[t]}^\top\Sbwh_{[t]}\big]=\Ib_N$. It suffices to show that $\E\big[\Ombh_{[t]}^\top\Ombh_{[t]}\big]=\Ib_N$. Let $\J_i$ denote the index set of $\Ombh_{[t]}$'s rows corresponding to $\Omb_i^{[t]}$, \textit{i.e.} $\J_i\subset\N_R$, $\bigsqcup_{\iota=1}^k\J_\iota=\N_R$, and $\#\J_i=r$ for each $i\in\N_k$. Then, $\Ombh_{[t]}\big|_{(\J_i)}$ is $\Omb_i^{[t]}$ presented in the submatrix corresponding to rows indexed by $\J_i$, and zeros everywhere else. We then have
\begin{align*}
  \E\left[\Ombh_{[t]}^\top\Ombh_{[t]}\right] &= \E\left[\sum_{i=1}^k\left(\Ombh_{[t]}\big|_{(\J_i)}\right)^\top\Ombh_{[t]}\big|_{(\J_i)}\right]\\
  &= \sum_{i=1}^k\E\left[\left(\Ombh_{[t]}\big|_{(\J_i)}\right)^\top\Ombh_{[t]}\big|_{(\J_i)}\right]\\
  &= \sum_{i=1}^k\Ib_N\big|_{(\J_i)} = \Ib_N
\end{align*}
and the proof is complete.
\end{proof}

\begin{proof}{[Proposition \ref{security_proposition}]}
This follows directly from Theorems 5.1 and 5.2 of \cite{ZWL08} when applied to our setting, and the definition of perfect secrecy \cite{bloch2021overview}.
\end{proof}

\section{Discrete Approximation to the Gaussian Sketch}
\label{discr_gaussian_appendix}

We include this appendix to give another practical benefit to our proposal on Hybrid sketching matrices, where $\Phib$ and $\Sbwh$ are binary random matrices, where the latter is also sparse and block diagonal (corresponding to the resulting global sketch of local distributed sketching). Additionally, what present here gives further intuition to our choice, and comparison to Gaussian sketching matrices. In essence, the hybrid sketching matrix $\Sbwt=\Phib\Sbwh$ can be interpreted as a discrete approximation to a Gaussian random matrix.

The main idea behind this is by an application of the de Moivre–Laplace theorem to the product of the sketching matrices $\Sb_{\{k\}}$ and $\Phib$. In what we present, the final sketching matrix approximates element-wise normalized Gaussian samples, though the entries themselves are not i.i.d. as $\Phib$ is applied globally; hence there is a correlation between the entries of the hybrid sketch, which prohibits us from giving a concentration guarantee on hybrid sketching.

The key justification to this is the de Moivre–Laplace theorem (Theorem \ref{deMoiv_Lapl}), a special case of the central limit theorem which states that the normal distribution may be used as an approximation to the binomial distribution under certain conditions. Below, we recall the de Moivre–Laplace theorem.

\begin{Thm}[de Moivre–Laplace Theorem \cite{Ross97}]
\label{deMoiv_Lapl}
If $S_\nu$ denotes the number of successes that occur when $\nu$ independent trials, each
resulting in a success with probability $p$; are performed, then, for any $a<b$, we have 
\begin{equation}
\label{dML_eq}
  \Pr\left[a\leqslant\frac{S_\nu-\nu p}{\sqrt{\nu p(1-p)}}\leqslant b\right] \to \Phi(b)-\Phi(a)
\end{equation}
for $\nu\to \infty$, and $\Phi(\cdot)$ the cumulative distribution function of a standard normal random variable, \textit{i.e.}
\begin{equation*}
  \Phi(x) = \int_{\leqslant x} \frac{1}{\sqrt{2\pi}} e^{-y^2/2} \,dy\ .
\end{equation*}
\end{Thm}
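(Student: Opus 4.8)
The statement is the classical de Moivre–Laplace theorem, and since $S_\nu = \sum_{i=1}^\nu X_i$ with the $X_i$ independent $\Bern(p)$ (each of mean $p$ and variance $pq$, writing $q\coloneqq 1-p$), it is exactly the special case of the central limit theorem for Bernoulli summands. The plan is to give the classical self-contained \emph{local-to-integral} argument rather than invoke the CLT as a black box, since this makes the normal density appear explicitly and keeps the whole proof at the level of Stirling's formula.

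First I would establish the \textbf{local limit}. For integers $k$ in the window $k = \nu p + t$ with $t = x\sqrt{\nu pq}$ and $x$ ranging over a fixed bounded set, I would apply Stirling's formula $m! = \sqrt{2\pi m}\,(m/e)^m(1+o(1))$ to each of $\nu!,\,k!,\,(\nu-k)!$ appearing in $\binom{\nu}{k}p^kq^{\nu-k}$. The prefactor collapses to $\sqrt{\nu/(2\pi k(\nu-k))}\to 1/\sqrt{2\pi\nu pq}$ because $k\approx\nu p$ and $\nu-k\approx\nu q$. For the exponential part I would take logarithms and expand: writing $\log\big[(\nu p/k)^k(\nu q/(\nu-k))^{\nu-k}\big] = -(\nu p+t)\log(1+t/(\nu p))-(\nu q-t)\log(1-t/(\nu q))$ and using $\log(1+u)=u-u^2/2+O(u^3)$, the linear terms cancel and the quadratic terms combine to $-t^2/(2\nu pq) = -x^2/2$. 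This yields the pointwise estimate
\[
  \Pr[S_\nu=k] = \frac{1}{\sqrt{2\pi\nu pq}}\,e^{-x_k^2/2}\,(1+o(1)), \qquad x_k\coloneqq\frac{k-\nu p}{\sqrt{\nu pq}}.
\]
Second, I would pass from the local to the \textbf{integral} statement by recognizing a Riemann sum. Summing the local estimate over all integers $k$ with $a\le x_k\le b$, the consecutive values $x_k$ are equally spaced with mesh $\Delta x = 1/\sqrt{\nu pq}$, so
\[
  \Pr\!\left[a\le\frac{S_\nu-\nu p}{\sqrt{\nu pq}}\le b\right] = \sum_{k:\,a\le x_k\le b}\Pr[S_\nu=k] = \sum_{k:\,a\le x_k\le b}\frac{1}{\sqrt{2\pi}}\,e^{-x_k^2/2}\,\Delta x\,(1+o(1)),
\]
which converges to $\int_a^b \tfrac{1}{\sqrt{2\pi}}e^{-y^2/2}\,dy = \Phi(b)-\Phi(a)$ as $\nu\to\infty$, giving \eqref{dML_eq}.

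The main obstacle is not any single computation but the \textbf{uniformity} needed to turn the pointwise estimate into the Riemann-sum limit: the $o(1)$ factor in the local estimate must be controlled \emph{uniformly} over the entire window $a\le x_k\le b$. This is where one verifies that the discarded cubic term $O(t^3/\nu^2)$ in the logarithmic expansion is $O(\nu^{-1/2})$ — because $t = O(\sqrt{\nu})$ throughout a bounded $x$-interval — and hence vanishes uniformly, and that the Stirling relative error is likewise uniform since $k$ and $\nu-k$ both grow linearly in $\nu$ on the window. With this uniformity the sum is sandwiched between Riemann sums of the continuous integrand $\tfrac{1}{\sqrt{2\pi}}e^{-y^2/2}$ and the limit follows. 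A shorter alternative, if one is willing to cite the Lévy continuity theorem, is to compute the characteristic function $\E\big[e^{is Z_\nu}\big] = \big(1 - s^2/(2\nu) + O(\nu^{-3/2})\big)^\nu \to e^{-s^2/2}$ for $Z_\nu\coloneqq(S_\nu-\nu p)/\sqrt{\nu pq}$, and read off convergence in distribution to $\mathcal{N}(0,1)$; this yields \eqref{dML_eq} at every continuity point of $\Phi$, and since $\Phi$ is everywhere continuous, all pairs $a<b$ qualify.
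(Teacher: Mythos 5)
Your proof is correct, but there is nothing in the paper to compare it against: Theorem~\ref{deMoiv_Lapl} is quoted as a classical fact with a citation to \cite{Ross97}, and the paper gives no proof of it whatsoever — it is used purely as a black box to derive Corollary~\ref{Cor_dML}, the Rademacher inner-product version that actually matters for the hybrid-sketching discussion. Your argument is the standard self-contained route and the details check out: Stirling's formula applied to $\nu!$, $k!$, $(\nu-k)!$ gives the prefactor $\sqrt{\nu/(2\pi k(\nu-k))}\to(2\pi\nu pq)^{-1/2}$; in the log-expansion the linear terms indeed cancel and the quadratic terms combine as $-\tfrac{t^2}{2\nu}\bigl(\tfrac1p+\tfrac1q\bigr)=-\tfrac{t^2}{2\nu pq}=-x_k^2/2$; and the passage from the local limit to the integral via a Riemann sum with mesh $1/\sqrt{\nu pq}$ is sound, with the uniformity of the error over the bounded window $a\leqslant x_k\leqslant b$ correctly identified as the one point requiring care (the cubic remainder is $O(t^3/\nu^2)=O(\nu^{-1/2})$ uniformly there). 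The characteristic-function alternative you sketch is also valid, and the appeal to continuity of $\Phi$ to upgrade convergence in distribution to the two-sided interval statement is handled properly. Either route is a legitimate proof; the paper simply chose to cite rather than reprove this classical theorem, which is the reasonable choice in context.
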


In the case where all sketching matrices $\Phib$ and $\Sb_{\{k\}}$, are random (rescaled) Rademacher projections, each entry $\Sbwt_{ij}$ follows a shifted (rescaled) binomial distribution. The reason is that each product of any two Rademacher r.v.s (equivalently Bernoulli r.v.s with success probability $p=1/2$ -- $\Bern(1/2)$) results in a Rademacher r.v. Hence, the sum of the inner product between any pair of rows of $\Phib^{\lceil j/k\rceil}$ and $\Sb_{\lceil i/k\rceil}$ is itself a shifted binomial distribution. In the following corollary, we rephrase Theorem \ref{deMoiv_Lapl} to match our setting.

\begin{Cor}
\label{Cor_dML}  
Let $\yb,\zb\in\{-1,+1\}^\nu$ such that $\Pr\left[\yb_i=-1\right]=\Pr\left[\yb_i=+1\right]$ and $\Pr\left[\zb_i=-1\right]=\Pr\left[\zb_i=+1\right]$ for each $i\in\N_\nu$, i.e. $\yb$ and $\zb$ are vectors with random Rademacher entries. Then, for any $a<b$:
\begin{equation*}
\label{corollary_expr}
  \lim_{\nu\to\infty}\left\{\Pr\left[a\leqslant\frac{\langle\yb,\zb\rangle}{\sqrt{\nu}}\leqslant b\right]\right\} \to \Phi(b)-\Phi(a)\ .
\end{equation*}
\end{Cor}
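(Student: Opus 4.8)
The plan is to reduce the statement directly to the de Moivre--Laplace theorem (Theorem~\ref{deMoiv_Lapl}) by recognizing that the inner product $\langle\yb,\zb\rangle$ is, up to an affine change of variables, a centered and rescaled $\mathrm{Binomial}(\nu,1/2)$ count. The first step is to set $W_i\coloneqq\yb_i\zb_i$ for each $i\in\N_\nu$ and observe that each $W_i$ is again a Rademacher random variable: since $\yb_i$ and $\zb_i$ are independent and symmetric on $\{-1,+1\}$, the product equals $+1$ exactly when the two factors agree (probability $1/2$) and $-1$ otherwise. Moreover, because the pairs $(\yb_i,\zb_i)$ are independent across $i$ and $W_i$ is a function of the $i$-th pair alone, the variables $\{W_i\}_{i\in\N_\nu}$ are mutually independent. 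This is the only probabilistic input needed.

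Next I would rewrite each $W_i=2B_i-1$ with $B_i\sim\Bern(1/2)$, so that $\langle\yb,\zb\rangle=\sum_{i=1}^\nu W_i=2S_\nu-\nu$, where $S_\nu\coloneqq\sum_{i=1}^\nu B_i$ counts the successes in $\nu$ independent $\Bern(1/2)$ trials, which is precisely the quantity appearing in Theorem~\ref{deMoiv_Lapl}. Specializing that theorem to $p=1/2$ gives $\nu p=\nu/2$ and $\sqrt{\nu p(1-p)}=\sqrt{\nu}/2$, whence the standardized count is
\begin{equation*}
  \frac{S_\nu-\nu p}{\sqrt{\nu p(1-p)}} = \frac{2S_\nu-\nu}{\sqrt{\nu}} = \frac{\langle\yb,\zb\rangle}{\sqrt{\nu}}\ .
\end{equation*}
Thus the random variable whose limiting law we wish to identify is literally the de Moivre--Laplace standardized binomial, and applying \eqref{dML_eq} for any fixed $a<b$ yields $\Pr\!\left[a\leqslant\langle\yb,\zb\rangle/\sqrt{\nu}\leqslant b\right]\to\Phi(b)-\Phi(a)$ as $\nu\to\infty$, which is exactly the claim.

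There is no substantial obstacle here; the content lies entirely in the two elementary observations that a product of two independent symmetric signs is itself a symmetric sign, and that these products inherit independence from the underlying coordinates. The only point deserving care is the bookkeeping of the affine map $W_i=2B_i-1$ and the resulting cancellation $2S_\nu-\nu$ against the normalization $\sqrt{\nu p(1-p)}=\sqrt{\nu}/2$, which must align with the $1/\sqrt{\nu}$ scaling in the statement. Once this alignment is verified, the corollary is an immediate specialization of Theorem~\ref{deMoiv_Lapl}, and no separate convergence argument is required.
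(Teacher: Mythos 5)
Your proof is correct and takes essentially the same approach as the paper's own proof: both recognize that the coordinatewise products $\yb_i\zb_i$ are independent Rademacher variables, convert them to $\Bern(1/2)$ variables via the affine map, and identify $\langle\yb,\zb\rangle/\sqrt{\nu}$ as exactly the de Moivre--Laplace standardized binomial with $p=1/2$. Your explicit verification that the products inherit independence across coordinates (needed to invoke the theorem) is a small point of added care that the paper leaves implicit.
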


\begin{proof}
We denote by $X_i$ a Rademacher r.v., and the sum of $\nu$ independent such variables by $X=\sum_{i=1}^\nu X_i$. Since $\langle\yb,\zb\rangle=\sum_{i=1}^\nu \yb_i\zb_i$ and $\Pr[\yb_i\zb_i=-1]=\Pr[\yb_i\zb_i=+1]=1/2$, we have $\langle\yb,\zb\rangle=X$.

Recall that we can transform a Rademacher r.v. $X_i$ to a $\Bern(1/2)$ r.v. though the transformation $X_i\mapsto\frac{X_i+1}{2}$. Denote the sum of $\nu$ the transformed random variables by $Y=\sum_{i=1}^\nu\frac{X_i+1}{2}=\frac{\nu}{2}+\frac{1}{2}X$. We can now apply what we have to the argument of \eqref{dML_eq}:
\begin{align*}
  \frac{S_\nu-\nu p}{\sqrt{\nu p(1-p)}} \mapsto \frac{Y-\nu/2}{\sqrt{\nu/4}} = \frac{\left(\frac{\nu}{2}+\frac{1}{2}X\right)-\frac{\nu}{2}}{\sqrt{\nu/4}} = \frac{\frac{1}{2}X}{\sqrt{\nu}/2} = \frac{X}{\sqrt{\nu}}\ .
\end{align*}
\end{proof}

From Corollary \ref{Cor_dML}, it is clear that for a larger $\nu$, which $\nu$ in our application corresponds to $R$, hybrid sketching involving random Rademacher matrices result in a finer approximation of a Gaussian distribution. One caveat when considering the SRHT as one of the projection matrices, is that not every entry is truly a random Rademacher entry, though empirically we observe that SRHT sketching matrices also perform well in hybrid sketching, and are preferable when considering time constraints in applying the larger sketching matrix $\Phib$.

In Figure \ref{fig_hybrid_sk_approx} we observe empirically that the probability density function of the hybrid sketches $\Phib\cdot\Sbwh$ with $\Sbwh$ being a random Rademacher matrix and $\Phib$ a random Rademacher matrix and then a SRHT, both result matrices whose entries are discrete approximations of a normal Gaussian distribution. It is worth noting at this point that the notion of a `discrete Gaussian distribution' has been used in the context of lattice based cryptography for learning with errors \cite{Reg09,Pei16,GMP20} and differential privacy \cite{CKS20}. In the context of lattice based cryptography, the discrete Gaussian distribution is defined as the Gaussian distribution restricted to a lattice coset, while in \cite{CKS20}; it resembles a Gaussian distribution which is supported only on the integers. While both scenarios closely resemble what we are presenting in Figure \ref{fig_hybrid_sk_approx}, they are both drastically different. The latter \cite{CKS20} is more similar to our discretization of a Gaussian normal distribution, though in our case the support is over the finite set $\left\{\sqrt{N/\Rt}\cdot\alpha\mid \alpha\in\{-R,-R+1,\ldots,R\}\right\}$.

Our approach has further benefits when compared to standard sketching with normal Gaussian matrices \cite{KVZ14}, as generating random Rademacher and SRHT sketching matrices and computing their product, is easier than generating actual Gaussian random matrices, which require more randomness. Furthermore, due to floating-point representation, in practice Gaussian sketches are always represented fine approximations.

\begin{figure}[h]
  \centering
    \includegraphics[scale=.2]{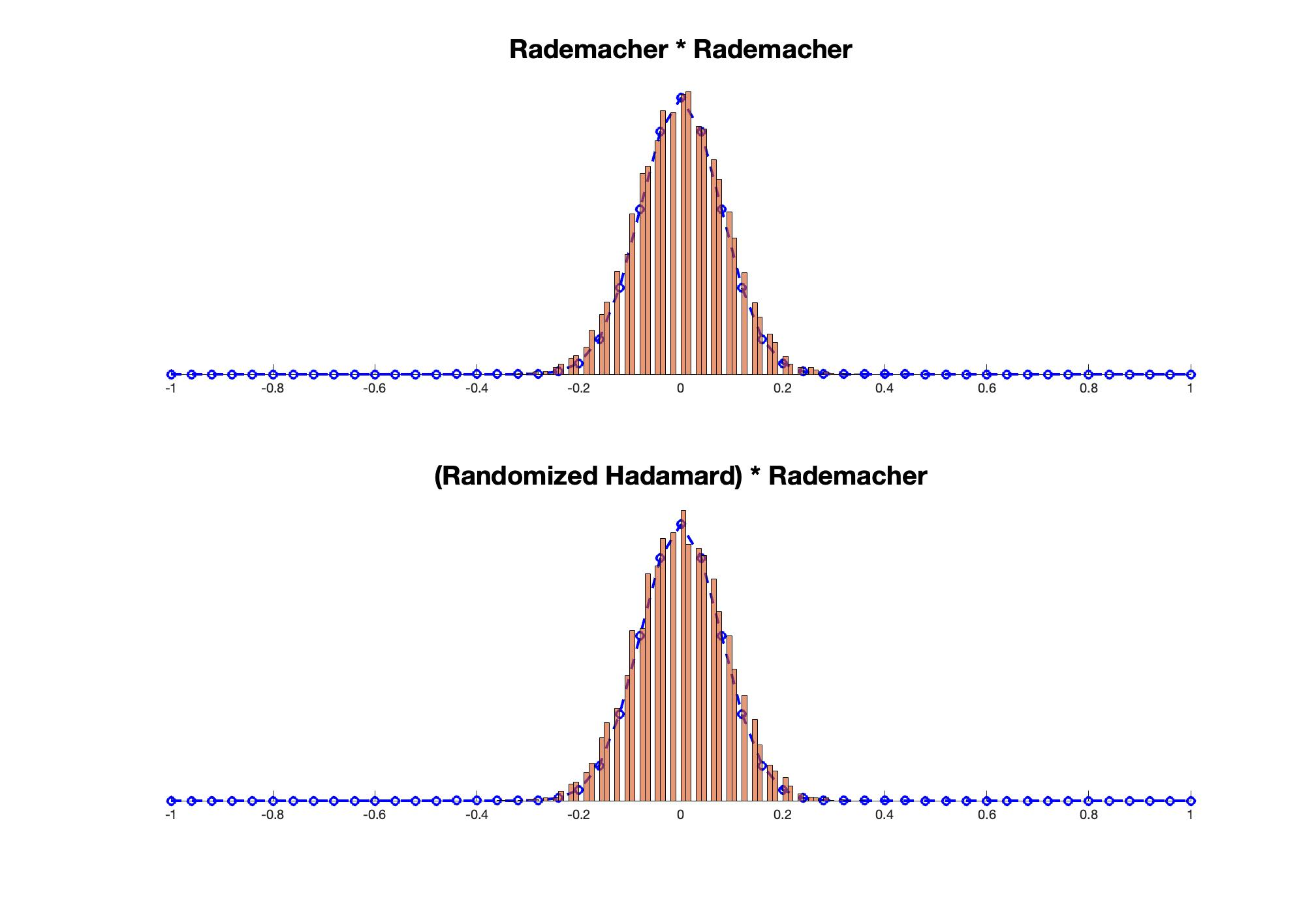}
    \caption{Empirical case where the product of two random Rademacher matrices, and of a randomized Hadamard and random Rademacher matrices, after appropriate rescaling, approximate a normal Gaussian distribution. In blue is the normalized Gaussian distribution, and in red we have a normalization of the histogram of the entries in the resulting two product matrices.}
  \label{fig_hybrid_sk_approx}
\end{figure}

\begin{Rmk}
\label{rmk_Gaussian_hybrid_sketch}
By Corollary \ref{Cor_dML} we conclude that the entries of the hybrid sketching matrix $\Sbwt$ \eqref{Phi_S_hybrid} can be interpreted as a discretization of a Gaussian normal distribution. Moreover, it is well known that such matrices are prime examples of Johnson-Lindenstrauss Transforms and $\ell_2$-s.e. embedding matrices.
\end{Rmk}

\newpage
\section{Flattening of leverage scores}
\label{app_flattening}

In this appendix, we provide further numerical experiments similar to Fig.~\ref{lvg_scores_tdistr_flat_fig}, which corroborate the fact that the local random projections $\Pb_{\{k\}}$ flatten the leverage scores of the global data matrix $\Ab$. The difference here, is that in Fig.~\ref{lvg_scores_sparse_flat_fig} we considered a sparse data matrix $\Ab$ which only had 10\% of its entries being nonzero, and in Fig.~\ref{lvg_scores_exp_flat_fig} the entries of $\Ab$ had varying magnitudes, in such a way that its leverage scores had an exponential-like trend. In both cases, $\Ab$ were generated randomly.

\begin{figure}[h]
  \centering
    \includegraphics[scale=.3]{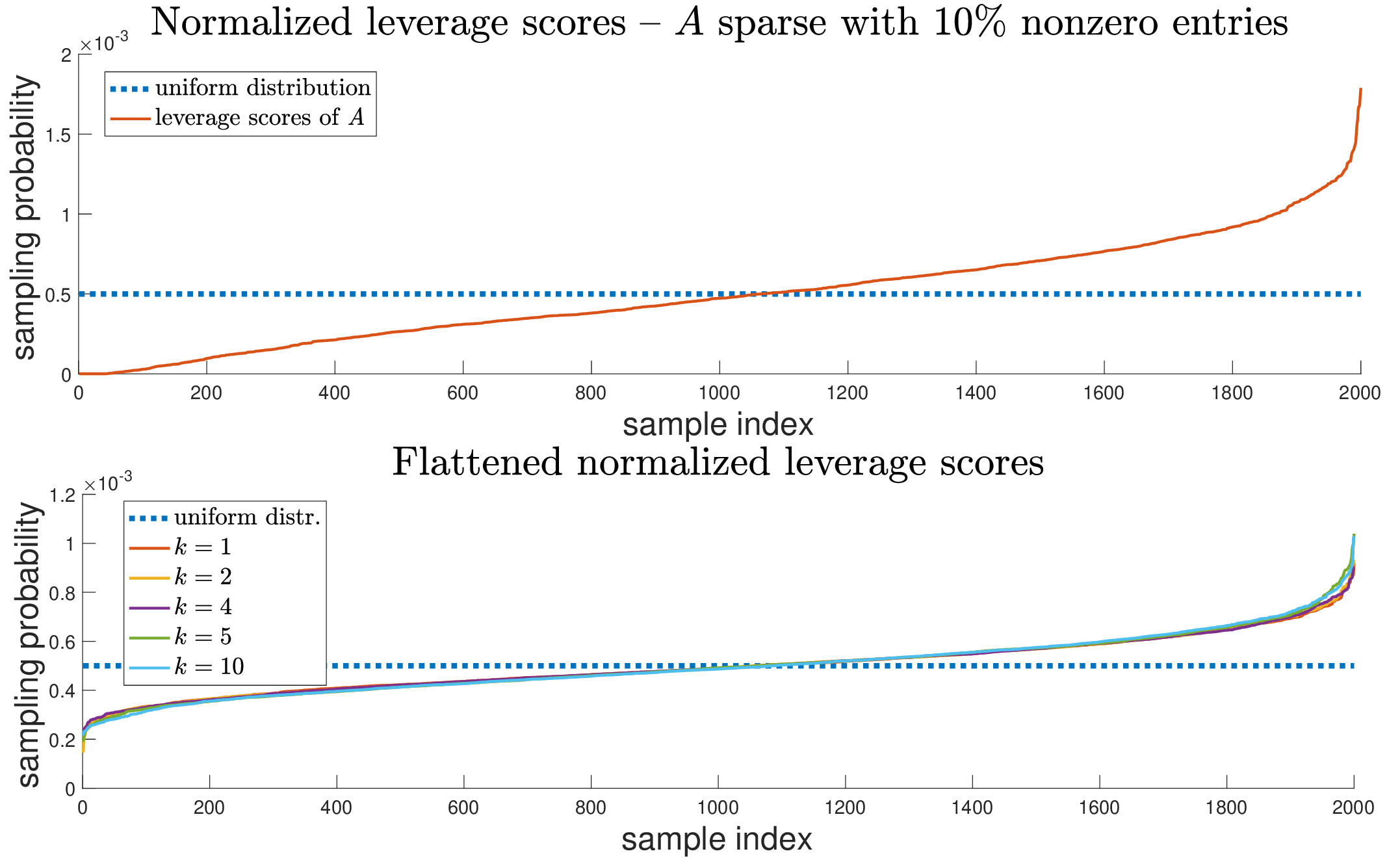}
    \caption{Flattening of leverage scores distribution, for $\Ab$ sparse with uniformly random entries, and $\nnz(\Ab)=Nd/10$.}
  \label{lvg_scores_sparse_flat_fig}
\end{figure}

\begin{figure}[h]
  \centering
    \includegraphics[scale=.3]{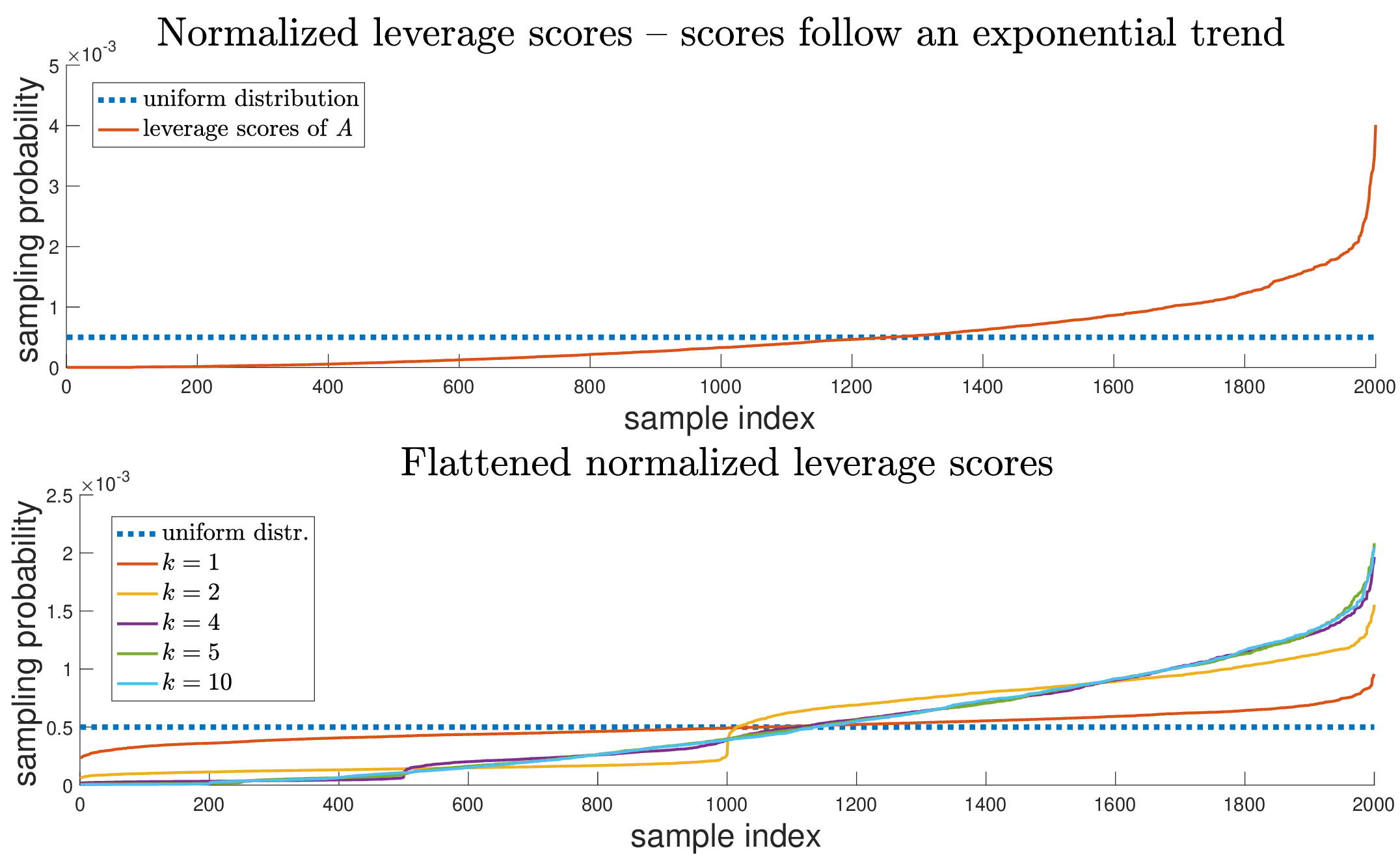}
    \caption{Flattening of leverage scores distribution, for random $\Ab$ with entries of varying magnitudes; by constant factors.}
  \label{lvg_scores_exp_flat_fig}
\end{figure}

\end{document}